\DeclareFontFamily{U}{shuffle}{}
\DeclareFontShape{U}{shuffle}{m}{n}{ <-8>shuffle7 <8->shuffle10}{}
\newtheorem{theorem}{Theorem}
\numberwithin{theorem}{section}
\newtheorem{proposition}[theorem]{Proposition}
\newtheorem{lemma}[theorem]{Lemma}
\newtheorem{corollary}[theorem]{Corollary}
\newtheorem{remark}[theorem]{Remark}
\newtheorem{example}[theorem]{Example}
\newtheorem{conjecture}[theorem]{Conjecture}
\newcommand{\PP}{\mathbb{P}}
\newcommand{\RR}{\mathbb{R}}
\newcommand{\QQ}{\mathbb{Q}}
\newcommand{\CC}{\mathbb{C}}
\newcommand{\ZZ}{\mathbb{Z}}
\newcommand{\KK}{\mathbb{K}}
\DeclarePairedDelimiter\abs{\lvert}{\rvert}
\newcommand{\Gim}{ \mathcal{G}^{\rm im}}
\newcommand{\Uim}{ \mathcal{U}^{\rm im}}
\newcommand{\Lim}{ \mathcal{L}^{\rm im}}
\newcommand{\Pim}{ \mathcal{P}^{\rm im}}
\begin{document}

\authorheadline{Carlos Am\'endola, Peter K. Friz and Bernd Sturmfels}
\runningtitle{Varieties of Signature Tensors}
 
\begin{frontmatter}

\title{Varieties of Signature Tensors}
\author[1]{Carlos Am\'endola}
 
\address[1]{Technische Universit\"at  M\"unchen
  \ead{carlos.amendola@tum.de}}

\author[2]{Peter Friz}
 
\address[2]{Technische Universit\"at Berlin and WIAS Berlin
  \ead{friz@math.tu-berlin.de}}
  
  \author[3]{Bernd Sturmfels}
 
\address[2]{MPI for Mathematics in the Sciences, Leipzig and UC Berkeley
  \ead{bernd@mis.mpg.de}}

\received{23 April 2018}
 
\begin{abstract}
 The signature of a parametric curve is a sequence of tensors
 whose entries are iterated  integrals.  This construction 
 is central to the theory of rough paths in  stochastic analysis.  It is
 here examined through the lens of algebraic geometry.
We introduce varieties of signature tensors for both deterministic paths
and random paths. 
For the former, we focus on piecewise linear paths, on polynomial paths,
and on varieties derived from free nilpotent Lie groups.
For the latter, we focus on Brownian motion and its mixtures.
\end{abstract}
\MSC[2010]{14Q15, 60H99}
 
\end{frontmatter}    

\section{Introduction}

A {\em path} is a continuous map $X : [0,1] \rightarrow \RR^d$. Unless otherwise stated, we assume that the coordinate functions $X_1,X_2,\ldots,X_d$ are
(piecewise) continuously differentiable functions. In particular, their differentials
$\,{\rm d}X_i (t) = X_i' (t) {\rm d} t \,$ 
obey the usual rules of
calculus. The  {\em $k$th signature tensor} of $X$  has order $k$ 
and format $d {\times} d {\times} \cdots {\times} d $. This tensor is denoted by $\sigma^{(k)}(X)$. Its $d^k$ entries $\sigma_{i_1 i_2 \cdots i_k}$  
are real numbers. These are computed as 
iterated integrals:
\begin{equation}
\label{eq:iteratedint}
 \sigma_{i_1 i_2 \cdots i_k} \,= \,
\int_{0}^1 \int_{0}^{t_k} \cdots \int_{0}^{t_3} \int_{0}^{t_2} 
{\rm d} X_{i_1} (t_1)  \,{\rm d} X_{i_2}(t_2) \, \cdots \,\,{\rm d} X_{i_{k-1}}(t_{k-1})\, {\rm d} X_{i_k}(t_k).
\end{equation}
We also write ${\rm d} X (t) = ({\rm d}X_1 (t), {\rm d} X_2(t), \ldots, {\rm d}X_d(t))$ for the differential of the path $X$. 
The signature tensor can be written as an integral
 over tensor products of such differentials:
\begin{equation}
\label{eq:tensor}
\sigma^{(k)}(X) \,\,\, = \,\,\,
\int_\Delta {\rm d} X(t_1)\otimes {\rm d} X(t_2)   \otimes \cdots \otimes {\rm d} X (t_k).
\end{equation}
This integral is over the simplex
$ \Delta =  \bigl\{ (t_1,\ldots,t_k) \in \RR^k\,: 0 \leq t_1 \leq \cdots \leq t_k \leq 1 \bigr\}$.

Signature tensors were introduced in the 1950s by Kuo Tsai Chen \cite{Chen54, Chen57, Chen58}.
They are central to the theory of rough paths \cite{Lyo98}, a revolutionary view on stochastic analysis, with important contributions by Terry Lyons, Martin Hairer and many others (cf.~\cite{FH, FV, LCL07, LQ02}).

  The {\em step-$n$ signature} of a path $X$ is the sequence of its first $n$ signature tensors:
\begin{equation}
\label{eq:tensorsupton}
 \sigma^{\leq n}(X) \, = \, \bigl( \,1, \,\sigma^{(1)}(X), \, \sigma^{(2)}(X), \,\sigma^{(3)}(X), \,\ldots ,\sigma^{(n)}(X)\, \bigr). 
\end{equation} 
Here $\sigma^{(1)}(X) =  X(1) - X(0) $ is a vector, $\sigma^{(2)}(X)$ is a matrix, etc.
As $n$ goes to infinity, we get a tensor series 
$\sigma(X)$, referred as  {\em the signature} of $X$.
Chen \cite{Chen58} proved that, up to a natural equivalence relation, each path $X$ 
is determined by its signature~$\sigma(X)$.
Hambly and Lyons \cite{HL} refined Chen's theory
and obtained quantitative results for paths of bounded variation. This was extended in \cite{Horatio} to arbitrary geometric rough paths. 
Algorithms for recovering paths $X$ from their signature $\sigma(X)$ are 
of considerable interest for applications \cite{CK, LyoICM, LS, LX1, LX2}.

This paper concerns paths $X$ that depend
on finitely many parameters. We focus on two families:
polynomial paths and piecewise linear paths. In the first case,
each coordinate $X_i$ is a polynomial in $t$ of degree $m$.
In the second case, $X_i$ is piecewise linear with $m$ pieces.
Each family has $dm$ parameters. The integral (\ref{eq:iteratedint})
is a polynomial of degree $k$ in these parameters.
The signature tensor
$\sigma^{(k)}(X)$ is a polynomial function of  the $dm$ parameters.
These tensors trace out an algebraic variety in 
the space of  all tensors. Our aim is to study such varieties.

Stochastic analysis is the natural home for paths and their signatures.
The present article is different. It examines paths and their signatures through the lens of
algebraic geometry \cite{CLO}, commutative algebra \cite{Eis} and
representation theory \cite{Lan}. Our inspiration is drawn from
 algebraic statistics~\cite{DSS} and  tensor decomposition \cite{KB}.
While the tensors we defined have their entries in~$\RR$, the associated 
 varieties comprise tensors  over the complex numbers $\CC$.
 We work with homogeneous polynomials, so they are
  complex projective varieties.

\smallskip
This paper is organized as follows.
In Section \ref{sec:fromto} we fix notation, 
we define our objects of study, and we compute first examples.
 We explain the passage from the real tensors $\sigma(X)$,
where $X$ ranges over paths of interest, to 
homogeneous ideals and complex projective varieties.
The resulting distinction between
{\em image} and  {\em variety}
is ubiquitious in applied algebraic geometry.

 Section \ref{sec:vsm} concerns signature matrices. The case  $k=2$
is surprisingly rich and interesting.
Theorem \ref{thm:realmatrixstuff} shows that
the signature images for polynomial paths of degree $m$ 
agree with those for piecewise linear paths with $m$ segments.
This result does not
generalize to higher order tensors. Theorem \ref{thm:matricesmain}  characterizes
the signature ideals in terms of rank constraints.

In Section \ref{sec:universal} we investigate the 
algebraic underpinnings of the theory of rough paths \cite[\S 7]{FV}.
We study the free Lie algebra and the associated nilpotent Lie groups \cite{Reu}.
Theorem \ref{thm:lyndonmain} characterizes their vanishing ideals 
by identifying explicit Gr\"obner bases.
The order $k$ component of the nilpotent Lie group defines
a variety that contains the signature tensors of all deterministic paths.
It serves as the ambient space for the varieties
 in the next section. 

Section \ref{sec:pwlp} studies signature varieties for
piecewise linear paths and for polynomial paths.
Chen's Theorem \cite{Chen54} gives a parametric representation for the former,
via tensor products of exponentials. 
The two cases are similar, but different (Theorem \ref{thm:different}).
Both families interpolate between   the Veronese variety,
which encodes line segments, and the universal variety (Theorem \ref{thm:chains}).
Numerous instances are computed explicitly. As an application,
we answer a question due to  Lyons and Xu \cite{LX2} concerning axis-parallel paths.
We conclude the section by introducing the {\em rough Veronese},
a variety comprising signatures of rough paths.

In Section \ref{sec:DimIde} we examine the dimensions and identifiability of
our signature varieties. Theorem \ref{thm:EPidentifiability} shows that the universal variety $\mathcal{U}_{d,k}$ 
is identifiable. We conjecture the same
for signature varieties of piecewise linear and polynomial paths
and we offer supporting evidence.

In Section \ref{sec:expected} we shift gears, by turning to random paths  and their
expected signatures. 
We focus on Brownian motion
and mixtures of Brownian motion. This leads to a
non-abelian refinement of the moment varieties in \cite{AFS, ARS}.
The varieties of expected signatures are
computed in several cases.
These are not contained in the universal variety
from Section \ref{sec:universal}.
However, all expected signatures are contained in the
convex hull of the universal variety.

The main contribution of this paper is a new
bridge between applied algebraic geometry and stochastic analysis, where
rough paths are encoded in signature tensors. Varieties of
such tensors offer a concise representation
of geometric data seen in numerous applications.

\section{From Integrals to Projective Varieties} \label{sec:fromto}

In this section we develop the foundations for signature varieties of paths.

\subsection{Computing Iterated Integrals}

A first observation is that the integrals (\ref{eq:iteratedint})
do not change if the path $X$
undergoes translation. We will therefore always assume 
$X(0) = 0$. For such a path $X$ in $\RR^d$, 
consider the entries of the signature matrix 
$\,\sigma^{(2)}(X) = (\sigma_{ij})$. They can be expressed as one-dimensional integrals:
$$  \sigma_{ij} \,\,= \,\, \int_{0}^1 \int_{0}^{t}  
{\rm d} X_{i} (s)  \,{\rm d} X_{j}(t) \,= \,
\int_{0}^1 X_i(t)  X_j'(t) {\rm d}t . $$
The symmetrization of this $d \times d$ matrix
is the matrix $X(1) \cdot X(1)^T $. This symmetric matrix has rank $\leq 1$.
Conversely, every $d \times d$ matrix $S$ whose symmetrization
$S + S^T$ has rank $\leq 1$ and is positive semidefinite arises from some path $X$ in $\RR^d$.
The set of  such matrices is the {\em signature image}
\begin{equation} 
     \Uim_{d,2} \,\,= \,\, \bigl\{\, \tfrac{1}{2} v \otimes v + Q \,\,:\, v \in \RR^d, \,\,
     Q \in \RR^{d \times d} \text{  skew-symmetric }  \,\bigr\} \,\, \subset \, \,\RR^{d \times d}  \ .
      \label{preP22}
\end{equation}
This converse is not obvious. It follows, for instance, from  Theorem~\ref{thm:realmatrixstuff}
below.

We are interested in polynomial equations that hold on the signature image.
Consider the case $d=2$. The symmetric part of our $2 \times 2$ matrix has
rank $1$, so its determinant vanishes. This means that the following quadratic
equation holds on the signature image:
\begin{equation}
 (\sigma_{12}  + \sigma_{21})^2 -  4 \sigma_{11} \sigma_{22} \,\,\, = \,\,\, 0.    \label{eq:favoritequadric}
 \end{equation}
 Let $\mathcal{U}_{2,2}^\RR$ denote the set of solutions $S = (\sigma_{ij})$  to this equation
 in the space of matrices  $\RR^{2 \times 2}$.
This is the smallest variety containing the signature image. 
The latter is the {\em semialgebraic~set}
$$ \mathcal{U}_{2,2}^{\rm im}\,\, = \,\, \bigl\{ \, S \in \mathcal{U}_{2,2}^\RR \,:\, \sigma_{11} \geq 0
\,\, {\rm and} \,\, \sigma_{22} \geq 0 \,\bigr\}.$$
For an algebraic geometer, it is natural to view the
$\sigma_{ij}$ as homogeneous coordinates for the projective space $\PP^3$.
In that ambient space,  the quadratic equation (\ref{eq:favoritequadric}) defines 
the following surface, which will be our {\em signature variety} in this case:
 $$ \mathcal{U}_{2,2} \,\,= \,\, \bigl\{\, [ \,\sigma_{11} : \sigma_{12} : \sigma_{21} : \sigma_{22}\, ]
  \in \PP^3 \text{ such that }  (\ref{eq:favoritequadric}) \text { holds} \,\bigr\} .
 $$
See the next subsection for
 the relevant philosophy.
 
We next present explicit formulas for the entries of the signature tensors for
two special paths in $\RR^d$. It is instructive to check the computation of
these integrals by hand for $d\leq 3$ and $k \leq 3$.

\begin{example}[The Canonical Axis Path] \label{ex:cap} \rm
Let $C_{\rm axis}$ be the path 
from $(0,0,\ldots,0)$ to $(1,1,\dots,1)$  given by $d$ linear
steps in the unit directions $e_1,e_2,\ldots,e_d$ (in that order).
One finds  its signature tensor $\sigma^{(k)}(C_{\rm axis})$
either directly by integration, or via
Chen's Formula~(\ref{eq:PL1}) for piecewise linear paths.
The entry $\sigma_{i_1 i_2 \cdots i_k}$ 
is zero unless  $i_1 \leq i_2 \leq \cdots \leq i_k$.
In that case, it equals $1 / k!$ times the
number of distinct permutations of the string $i_1 i_2 \cdots i_k$.
For example, if $k=4$ then 
$ \sigma_{1111} = \frac{1}{24},\, \sigma_{1112} = \frac{1}{6},\, \sigma_{1122} = 
\frac{1}{4},\, \sigma_{1123} = \frac{1}{2},\,
 \sigma_{1234} = 1$ and $\sigma_{1243} = 0$. 
  \hfill $\diamondsuit $
\end{example}

We next specify a blueprint for paths
whose coordinates are polynomials.

\begin{example}[The Canonical Monomial Path] \label{ex:cmp} \rm
Let $C_{\rm mono}$ denote the monomial path $t \mapsto (t,t^2,t^3,\ldots,t^d)$. It also
travels from $(0,0,\ldots,0)$ to $(1,1,\ldots,1)$, but now along the rational normal curve. The entries
$\sigma_{i_1 i_2 \cdots i_k}$ of the signature tensor  $\sigma^{(k)}(C_{\rm mono})$ are given 
in (\ref{eq:iteratedint}), by integrating
$i_1 i_2 \cdots i_k $ times $ t_1^{i_1-1} t_2^{i_2-1} \cdots t_k^{i_k-1}$
 over the simplex $\Delta$. This computation reveals, for $1 \leq i_1,i_2, \ldots, i_k \leq d$,
$$  \quad \sigma_{i_1 i_2 \cdots i_k} \,\, = \,\,\,
\frac{i_1}{i_1} \cdot \frac{i_2}{i_1+i_2} \cdot \frac{i_3}{i_1+i_2+i_3} 
\,\cdots \, \frac{i_k}{i_1 + i_2 + \cdots + i_k}  .
\qquad 
$$
\end{example}

The general linear group $G = {\rm GL}(d,\RR)$ acts on
paths in $\RR^d$. It also acts on signatures, by the
$k$th tensor power representation. This action extends 
to non-invertible $d \times d$ matrices.
The map $X \mapsto \sigma^{(k)}(X)$
is equivariant for this action.  This allows us to derive
formulas for the following two special classes of paths.

Let ${\bf X} = (x_{ij})$ be a $d \times d$ matrix. Its $j$th column represents
either a linear step or a vector of coefficients associated to the monomial $t^j$.
 Thus, ${\bf X} C_{\rm axis}$ is a piecewise linear path
with $d$ pieces, and ${\bf X} C_{\rm mono}$ is
a polynomial path of degree $d$.  Conversely, all such paths
arise from some choice of  matrix ${\bf X}$.
The signature tensor $\sigma^{(k)}( {\bf X} C_\bullet) $
is the image of $\sigma^{(k)}(C_\bullet)$ under action by ${\bf X}$.
For $k=2$, this
action is multiplying the matrix $\sigma^{(2)} (C_\bullet)$ 
by ${\bf X}$ on the left and by ${\bf X}^T$  on the right.

\begin{example}[$d=3,k=2$] \label{ex:d3k2} \rm
The signature matrices of the  canonical paths are
\begin{equation}
\label{ex:axismono}
\sigma^{(2)}( C_{\rm axis}) \,=\, \begin{pmatrix}
\,\, \frac{1}{2} & 1 & 1 \, \medskip \\
\,\, 0 & \frac{1}{2} & 1 \,\medskip \\
\,\, 0 & 0 & \frac{1}{2}\, \,\end{pmatrix}
\quad \hbox{and} \quad
\sigma^{(2)} (C_{\rm mono}) \,=\, \begin{pmatrix} 
\,\,\frac{1}{2} & \frac{2}{3} & \frac{3}{4} \,\medskip \\
\,\,\frac{1}{3} & \frac{2}{4} & \frac{3}{5} \,\medskip \\
\,\,\frac{1}{4} & \frac{2}{5} & \frac{3}{6} \,\,
\end{pmatrix}.
\end{equation}
The symmetric part of each signature matrix is the same constant rank $1$ matrix:
$$ \sigma^{(2)}(C_{\rm axis}) + \sigma^{(2)}(C_{\rm axis})^T\,\,=\,\,
\sigma^{(2)}(C_{\rm mono}) + \sigma^{(2)}(C_{\rm mono})^T \,\, = \,\,
 \begin{pmatrix}
 1 & 1 & 1 \\
 1 & 1 & 1 \\
1 & 1 & 1 
\end{pmatrix}.
$$
We encode cubic paths $ X = {\bf X} C_{\rm mono}$
and $3$-step paths $X = {\bf X} C_{\rm axis}$
by $3 \times 3$ matrices
$$ {\bf X} \, = \,
\begin{pmatrix} x_{11} & x_{12} & x_{13} \\
x_{21} & x_{22} & x_{23} \\
x_{31} & x_{32} & x_{33}
\end{pmatrix} . $$
In each case, the map  $\,X \mapsto \sigma^{(2)}(X) \,$ from paths to
signature matrices is encoded by the quadratic map
$\,{\bf X} \mapsto {\bf X}  \sigma^{(2)}(C_\bullet) {\bf X^T}$. This takes
the space of $3 \times 3 $ matrices
to itself. The image consists of certain
$3 \times 3$ matrices $S$ whose symmetric part
$\frac{1}{2}(S+S^T)$ has rank $1$. \hfill $\diamondsuit $
\end{example}

\begin{example}[$d=2,k=3$] \label{ex:d2k3} \rm
Consider a general quadratic path in the plane $\RR^2$:
\begin{equation} \label{quadratic} 
       X(t) \,\,=\,\, (x_{11} t + x_{12} t^2, 
x_{21} t + x_{22} t^2)^T \,\,=  \,\,
\begin{pmatrix} x_{11} & x_{12}  \\
x_{21} & x_{22} 
\end{pmatrix} 
\cdot
\begin{pmatrix} t  \\
t^2
\end{pmatrix} .
\end{equation}
 Its third signature $(k=3)$ is a $2 \times 2 \times 2$ tensor. The eight entries 
of $\sigma^{(3)}(X)$ are
\begin{equation}
\label{eq:entriesare} \begin{matrix}
\sigma_{111}  & = &     \frac{1}{6} (x_{11} + x_{12})^3 &  & \smallskip \\
\sigma_{112}  & = &    \frac{1}{6} (x_{11} {+} x_{12})^2 (x_{21}{+}x_{22}) & + &
 \frac{1}{60} (5x_{11}+4x_{12})(x_{11} x_{22} - x_{21} x_{12})
\smallskip \\
\sigma_{121}  & = &  \frac{1}{6} (x_{11} {+} x_{12})^2 (x_{21}{+}x_{22}) & + &
\quad  \frac{1}{60} (2 x_{12})(x_{11} x_{22} - x_{21} x_{12})
 \smallskip \\
\sigma_{211}  & = &   \frac{1}{6} (x_{11} {+} x_{12})^2 (x_{21}{+}x_{22}) & - &
 \frac{1}{60} (5x_{11}+6x_{12})(x_{11} x_{22} - x_{21} x_{12})
 \smallskip \\
\sigma_{122}  & = &    \frac{1}{6} (x_{11} {+} x_{12}) (x_{21}{+}x_{22})^2 & + &
\frac{1}{60} (5x_{21}+6x_{22})(x_{11} x_{22} - x_{21} x_{12})
\smallskip \\
\sigma_{212}  & = &    \frac{1}{6} (x_{11} {+} x_{12}) (x_{21}{+}x_{22})^2 & - &
\quad  \frac{1}{60} (2 x_{22})(x_{11} x_{22} - x_{21} x_{12})
\smallskip \\
\sigma_{221}  & = &  \frac{1}{6} (x_{11} {+} x_{12}) (x_{21}{+}x_{22})^2 & - &
 \frac{1}{60} (5x_{21}+4x_{22})(x_{11} x_{22} - x_{21} x_{12})
\smallskip \\
\sigma_{222}  & = &     \frac{1}{6} (x_{21} + x_{22})^3 & 
\end{matrix}
\end{equation}
This is the transformation of the special tensor $\sigma^{(3)}(C_{\rm mono})$ in Example~\ref{ex:cmp}
under the action of the $2 \times 2$ matrix ${\bf X} = (x_{ij})$.
The symmetrization of $\sigma^{(3)}(X)$ is a tensor of rank one, because  
\begin{equation}
\label{eq:itfactors}
\begin{small}
 \begin{matrix} 
&  \sigma_{112} + \sigma_{121} + \sigma_{211} &= &
 \sigma_{11\, \shuffle \, 2} = \sigma_{11}  \sigma_{2} = \frac{1}{2}  \sigma_{1}^2  \sigma_{2} &
= & \frac{1}{2}  (x_{11}{+}x_{12})^2 (x_{21}{+}x_{22}) 
 \smallskip \\
 & \sigma_{122} + \sigma_{212} + \sigma_{221} & = & 
 \sigma_{1\, \shuffle \, 22} = \sigma_{1}  \sigma_{22} = \frac{1}{2}  \sigma_{1} \sigma_{2}^2 &
 =& \frac{1}{2}  (x_{11}{+}x_{12}) (x_{21}{+}x_{22})^2.
 \end{matrix}
 \end{small}
\end{equation}
The {\it shuffle relations} in the middle will be defined and play an important role in Section~\ref{sec:universal}.
 Of course, the equality of left- and right-hand sides in (\ref{eq:itfactors}) can be checked directly from (\ref{eq:entriesare}).
  
  We wish to find relations among the eight entries of $\sigma^{(3)}(X)$ that hold for all quadratic paths. Hence, we want to eliminate the parameters $x_{ij}$ from (\ref{eq:entriesare}).
This task, known as {\em implicitization}, arises frequently in computer algebra.
The standard approach using {\em Gr\"obner bases} is explained in the undergraduate textbook by
Cox, Little and O'Shea \cite[\S 3.3]{CLO}. The output of the implicitization is the
prime ideal $P_{2,3,2}$ of polynomial relations among the $\sigma_{ijk}$.

The ideal $P_{2,3,2}$  is  generated by nine quadratic relations.
Three of them are
$$
\begin{small}
\begin{matrix}  {\rm degree} & & \hbox{ideal generator} \smallskip \\
(4,2) &  &  3  \sigma_{121}^2- \sigma_{111}  \sigma_{122}
-10  \sigma_{112}  \sigma_{211} + 2  \sigma_{121}  \sigma_{211}
     +2  \sigma_{211}^2+11  \sigma_{111}  \sigma_{212}
     -7  \sigma_{111}  \sigma_{221} \\
(3,3) & &
     10  \sigma_{122}  \sigma_{211} - 4  \sigma_{112}  \sigma_{212} -
     7  \sigma_{121}  \sigma_{212}
     +2  \sigma_{112}  \sigma_{221}-4  \sigma_{121}  \sigma_{221}
     +3  \sigma_{111}  \sigma_{222} \\
(2,4) & &  3  \sigma_{212}^2-10  \sigma_{122}  \sigma_{221}
+2  \sigma_{212}  \sigma_{221} + 2  \sigma_{221}^2- \sigma_{112}  \sigma_{222}
+11  \sigma_{121}  \sigma_{222} -7  \sigma_{211}  \sigma_{222} \\
\end{matrix}
\end{small} 
$$     
These three constraints on $2 \times 2 \times 2$ signature tensors  are specific to  quadratic paths in $\RR^2$.
The other six generators are valid for \underbar{all} paths in $\RR^2$. Those are shown in Example \ref{ex:twothree}.
All nine quadrics can be found with any computer algebra system 
that offers Gr\"obner bases. The computations with ideals reported in this 
paper were carried out with
the software {\tt Macaulay2} \cite{M2}.  \hfill $\diamondsuit $
\end{example}

\subsection{The Many Lives of a Variety} \label{sec:manylives}

We now step  back to contemplate the title of this article.
{\em Signature tensors} were explained in the previous subsection.
Our next goal is to explain what is meant by their {\em varieties}.
This is important because our presentation rests on conventions from algebraic geometry 
that may be unfamiliar to readers from stochastic analysis.
The key point is that the geometric objects we study are
complex projective varieties along with their 
homogeneous ideals.

Consider an arbitrary map $\sigma: \RR^p \rightarrow \RR^q$ whose
coordinates $\sigma_i$ are homogeneous polynomials in $p$ variables
of degree $k$.
We are interested in the image of $\sigma$.
Algebraically, this is represented by an ideal $F$ in the polynomial ring in $q$
variables, namely the kernel of the associated ring homomorphism
from that polynomial ring  to the polynomial~ring in $p$ variables.
The ideal $F$ is {\em homogeneous} and  {\em prime}. It is generated by homogeneous
polynomials. Being prime means that the quotient modulo $F$ is an integral domain
\cite[\S 5.1, Prop.~4]{CLO}.

The image of $\sigma$ is a {\em semialgebraic set} in $\RR^q$,
denoted by $\,\mathcal{F}^{\rm im} = {\rm image}(\sigma)$.
This set is often very complicated. One works with outer
approximations by algebraic varieties, namely the real and complex zeros of the ideal $F$. 
The {\em complex variety} $\mathcal{F}^\CC = \mathcal{V}^\CC(F) \subset \CC^q$
contains the {\em real variety} $\mathcal{F}^\RR = \mathcal{V}^\RR(F) \subset \RR^q$.
Both of the following two inclusions are usually strict:
\begin{equation}
\label{eq:Finclusions} \mathcal{F}^{\rm im} \subset \mathcal{F}^\RR \subset \mathcal{F}^\CC . \end{equation}

\begin{example} \label{ex:zweifunf} \rm
Let $p=2, q= 4$ and $\sigma : u \mapsto u \otimes u$ the map
that takes a vector $u = (u_1,u_2)$ to the associated rank one matrix
$u \otimes u = \begin{small} \begin{pmatrix} u_1^2 & u_1 u_2 \\ u_1 u_2 & u_2^2 \end{pmatrix} \end{small}$.
The ideal $F$ is generated by the linear form $\sigma_{12} - \sigma_{21}$
and the quadratic form $\sigma_{11} \sigma_{22} - \sigma_{12} \sigma_{21}$.
The variety  $\mathcal{F}^\RR$ is a real surface in $\RR^4$, and
$\mathcal{F}^\CC$ is a complex surface in $\CC^4$.
These are the symmetric $2 \times 2$ matrices of rank at most $1$.
The image $\mathcal{F}^{\rm im}$ is the subset of matrices in $\mathcal{F}^\RR$
that are positive semidefinite.
\hfill $\diamondsuit$
\end{example}

From a geometric point of view, it is natural to work modulo
scaling. This means we work in a projective space:
two non-zero vectors are identified when they are parallel.
Our degree $k$ map $\sigma$ induces
a rational map of complex projective spaces $ \sigma' : \PP_\CC^{p-1} \dashrightarrow \PP_\CC^{q-1}$,
and this restricts to a rational map
of real projective spaces $\sigma'' : \PP_\RR^{p-1} \dashrightarrow \PP_\RR^{q-1}$.
Being {\em rational} means that $\sigma'$ may not be everywhere defined.
 If $u \in   \CC^p \backslash \{0\}$ satisfies $\sigma(u) = 0$ then
 $\sigma'$ is not defined at  the corresponding point $[u] \in \PP_\CC^{p-1}$.
 Such points are known as {\em base points}.

 Every subset $\mathcal{U}$ of $\RR^q$ or $\CC^q$ gives rise to a subset
 in $\PP_\RR^{q-1}$ or $\PP_\CC^{q-1}$,  namely the set  $[\mathcal{U}]$ of lines
 that are spanned by vectors in $\mathcal{U}$. We write this in symbols as
 $$ [ \mathcal{U}] \,\, = \,\, \{ \,[u] \,: u \in \mathcal{U} \backslash \{0\} \,\}. $$
  If $\mathcal{U} \backslash \{0\} $ is an affine variety 
   then $[\mathcal{U}]$ is a projective variety \cite[\S 8.2]{CLO}.
  
  With this, the chain of inclusions in   (\ref{eq:Finclusions})
  is now taken  into projective space:
  \begin{equation}
\label{eq:Finclusions2}
{\rm image}(\sigma'') =  [\mathcal{F}^{\rm im}] \,\subset \, [\mathcal{F}^\RR ]
\,\subset\, [ \mathcal{F}^\CC ]. \end{equation}
We similarly have the inclusion $\,{\rm image}(\sigma') \subset [\mathcal{F}^\CC]\,$ 
in the complex projective space $\PP^{q-1}_\CC$.
If we recast Example~\ref{ex:zweifunf} in the projective setting then the left inclusion 
in (\ref{eq:Finclusions2}) becomes an equality. Indeed, for this example,
  $\, [\mathcal{F}^{\rm im}] \,= \, [\mathcal{F}^\RR ] \,$ in $\PP^3_\RR$
 because every $2 \times 2$ matrix of rank $1$ is either positive or negative semidefinite.
 Similarly, $\,{\rm image}(\sigma')= [\mathcal{F}^\CC]\,$ holds in $\PP^3_\CC$.

The key word {\em Zariski closure} refers to the
passage in (\ref{eq:Finclusions}) or (\ref{eq:Finclusions2})
from images on the left to varieties on the right.
The Zariski closure is the smallest variety containing a given~set.

We have now introduced quite a few decorations for the symbol $\mathcal{F}$,
and these decorations are important when making precise geometric statements
pertaining to the three dichotomies 
$$ \text{{\em image}\, versus {\em closure of image}, \,\,
{\em real} \,versus {\em complex},\,\,
{\em affine}\, versus {\em projective}.
}
$$
In the literature in applied algebraic geometry,
these distinctions are often swept under the rug.
This is a matter of convenience and simplicity.
For the most part, our article will follow that convention.
The symbol $\mathcal{F}$   will 
stand for the complex projective variety $[\mathcal{F}^\CC]$.
Some of our results will pertain to the images $\mathcal{F}^{\rm im}$
and $[\mathcal{F}^{\rm im}]$ over the real numbers,
and in those cases the decorations for $\mathcal{F}$ will be used and highlighted.
Thus $\PP^{q-1}$ is  a projective space over $\CC$,
and  $\mathcal{F}$ is a subvariety in that space. It is tacitly
understood that $\mathcal{F}$ arose from some specific polynomial map $\sigma$
and that $\mathcal{F}$ is a proxy for $\mathcal{F}^{\rm im} = {\rm image}(\sigma)$.
For instance, the map $\sigma$ in Example~\ref{ex:zweifunf} defines a 
 quadratic curve $\mathcal{F}$ in a plane $\PP^2$  that lives in $\PP^3$.

Both of the fields $\RR$ and $\CC$ are peripheral when it comes to 
computations with software.
In most applications, including ours, the coordinates of $\sigma$ 
have coefficients in the field $\QQ$ of rational numbers.
All polynomials seen in this article have rational coefficients.
In particular, the ideal $F$ lives in a polynomial ring with coefficients in $\QQ$.
The variety $\mathcal{F}$  is represented through its ideal~$F$.

\smallskip 

We conclude that the title of this paper refers to the algebro-geometric
study of a certain {\em signature map} $\sigma$.
 Its coordinates are homogeneous polynomials of degree $k$.
 We seek to find the homogeneous prime ideal $F$ of
 relations among these polynomials.
    This is the {\em signature ideal} $F$. Its 
  {\em signature variety} $\mathcal{F}$ is the
   complex projective  variety, unless explicitly stated otherwise.
   The associated {\em signature image}    $\mathcal{F}^{\rm im}$
   is a semialgebraic set in real affine space.
   
Parameter identifiability is an important issue, and we 
shall address this in Section~\ref{sec:DimIde}.
In  Example \ref{ex:zweifunf},
the map $\sigma: \RR^2 \rightarrow \mathcal{F}^{\rm im}$ is two-to-one.
Each nonzero matrix in $\mathcal{F}^{\rm im} $
has two preimages $(u_1,u_2)$ and $(-u_1,-u_2)$.
On the other hand, the map $\sigma' : \PP^1 \rightarrow \mathcal{F}$ is one-to-one.
Every point on the quadratic curve $\mathcal{F}$ is uniquely
represented by a point $[u_1:u_2]$ in the parameter line $\PP^1$.
We express this by saying  that the variety $\mathcal{F}$ is {\em rationally identifiable}.

Throughout this paper we use caligraphic letters for varieties
and roman letters for ideals. So, if $\mathcal{F}$ denotes a variety
then we write $F$ for its radical ideal.  If $\mathcal{F}$ is irreducible then
$F$ is prime, and if $\mathcal{F}$ is projective then $F$ is homogeneous.
For any field  $\KK$, such as $\CC $ or $\RR$ or $\QQ$, we also use the notation
$\mathcal{F}(\KK)$ instead of $\mathcal{F}^\KK$ for the set of points with coordinates in $\KK$.

We illustrate this for some objects that will appear in Section \ref{sec:universal}.
The ($n$-truncated) free Lie algebra over $\RR$ with $d$ generators is denoted by  
${\rm Lie}^n(\RR^d)$. This
 is a linear subspace in an affine space of dimension $d+d^2+\cdots+d^n$, namely the truncated tensor algebra
over $\RR$. Its ideal lives in a polynomial ring over $\QQ$ in
$d+d^2+\cdots+d^n$ variables. It is generated by the linear forms 
$\sigma_{I \,\shuffle \,J}$ in Lemma \ref{lem:shuffleforms}.
The corresponding Lie group $ \Gim_{d,n}  = \exp ({\rm Lie}^n(\RR^d) )$ is
also an affine variety, with ideal $G_{d,n}$ generated by
the quadratic polynomials $\sigma_I \sigma_J -\sigma_{I \,\shuffle \,J}$.
The intersection of this ideal with the polynomial subring for a fixed
level $k$, where the variables $\sigma_I$ satisfy $|I| = k$, is 
a homogeneous prime ideal $U_{d,k}$. The associated 
projective variety $\mathcal{U}_{d,k}$ is our universal variety.
Its subset of real points, denoted $\mathcal{U}_{d,k}(\RR)$, contains
the signature tensor $\sigma^{(k)}(X)$ for any deterministic path $X$ in $\RR^d$.
This is the Chen-Chow Theorem \ref{thm:chen}.

\section{Varieties of Signature Matrices} \label{sec:vsm}

We here study varieties of $d \times d$ signature matrices $S = \sigma^{(2)}(X)$ of
paths $X$ in $\RR^d$. Any $ d \times d$  matrix $S$ can be written uniquely as the sum of
its symmetric part and skew-symmetric part:
$$ \qquad S \,=\, P+Q, \qquad \hbox{where} \quad
P = \frac{1}{2}(S+S^T)  \quad \hbox{and} \quad
Q = \frac{1}{2}(S-S^T).$$
The $\binom{d+1}{2}$ entries $p_{ij}$ of $P$
and the $\binom{d}{2}$ entries $q_{ij}$ of $Q$ serve
as coordinates on the space of matrices, either $\RR^{d \times d}$ or $\PP^{d^2-1}$.
Thus, in this section we transform our coordinates
as follows:
$$\sigma_{ii} = p_{ii} \,,\,\,
\sigma_{ij} = p_{ij} + q_{ij} \quad \hbox{and} \quad
\sigma_{ji} = p_{ij} - q_{ij} \quad \hbox{ for } \quad 1 \leq i < j  \leq d. $$

The signature matrix of a path $X$ in $\RR^d$ has real entries.
We here restrict to certain families of paths that depend on a parameter $m$.
The resulting signature matrices $\sigma^{(2)}(X)$ form
the signature image. This is a semialgebraic subset in the
 real affine space $\RR^{d \times d}$, 
as discussed in Subsection \ref{sec:manylives}.
We shall represent it as an orbit of the group $ {\rm GL}(d,\RR)$
with respect to its natural action on the tensor product
$\,\RR^{d \times d} = \RR^d \otimes \RR^d$.
The signature variety is the Zariski closure of this orbit in the 
projective space $\PP^{d^2-1}$ of complex $d \times d$ matrices.

\subsection{Real Signature Matrices} \label{sec:rsm}

We fix positive integers $m \leq d$.
Let $S_{\rm axis}^{[d,m]}$ denote the $d {\times} d$ matrix
whose upper left $m {\times} m$ block is the upper triangular
matrix $\sigma^{(2)}(C_{\rm axis})$ in Example \ref{ex:cap},
with zeros in all other entries. Similarly, we write $S_{\rm mono}^{[d,m]}$ 
for the $d {\times} d$ matrix whose upper left $m {\times} m$ block is the 
matrix $\sigma^{(2)}(C_{\rm mono})$ in Example \ref{ex:cmp}. 
See equation (\ref{eq:axismono32}) in Example~\ref{ex:axismono32}
for the case $d=3,m=2$. The decompositions of our matrices into
symmetric and skew-symmetric parts are 
\begin{equation}
\label{eq:SPVpencil}
 S_{\rm axis}^{[d,m]} \,= \, 
P_{\rm axis}^{[d,m]} \,+\,
Q_{\rm axis}^{[d,m]} \quad \hbox{and} \quad
S_{\rm mono}^{[d,m]} \,= \, 
P_{\rm mono}^{[d,m]} \,+\,
Q_{\rm mono}^{[d,m]}. 
\end{equation}

A  piecewise linear path $X$ with $m$ segments in $\RR^d$  is represented
by any $d {\times} d$ matrix ${\bf X} = (x_{ij})$ whose first $m$ columns are the steps.
The signature matrix of this path equals
\begin{equation}
\label{eq:XSXaxis}
 \sigma^{(2)}(X) \,\, = \,\, {\bf X}  \cdot S_{\rm axis}^{[d,m]} \cdot {\bf X}^T. 
\end{equation}
The signature image for piecewise linear paths with $m$ segments in $\RR^d$ is the orbit
\begin{equation} \label{def:LS2}
    \Lim_{d,2,m} \,\,:= \,\,\bigl\{ \,{\bf X}  \cdot S_{\rm axis}^{[d,m]}\cdot {\bf X}^T\, : {\bf X} \in \RR^{d \times d} 
\,    \bigr\} 
    \,\, \subset \,\, \RR^{d \times d}.
 \end{equation} 

A polynomial path of degree $m$ in $\RR^d$  is represented
by any $d {\times} d$ matrix ${\bf X} = (x_{ij})$ whose first $m$ columns
give the coefficients of the polynomials. As in (\ref{eq:XSXaxis}),
it has signature matrix 
\begin{equation}
\label{eq:XSXmono}
 \sigma^{(2)}(X) \,\, = \,\, {\bf X}  \cdot S_{\rm mono}^{[d,m]} \cdot {\bf X}^T. 
\end{equation}
The signature image for polynomial paths of degree $m$ in $\RR^d$ is the
resulting orbit
\begin{equation} \label{def:PS2}
     \Pim_{d,2,m}  \,\,:= \,\,\bigl\{ \,{\bf X} \cdot S_{\rm mono}^{[d,m]} \cdot {\bf X}^T\, : {\bf X} \in \RR^{d \times d}
     \,\bigr\} \,
       \,\, \subset \,\, \RR^{d \times d}.  
\end{equation} 

\begin{example}[$m=1$ and Veronese] \rm The signature image for linear paths equals
$$      \Lim_{d,2,1} \,=\, \Pim_{d,2,1}\, = \,\bigl\{ \tfrac{1}{2} X X^T \ : \ X \in \RR^d \bigr\}  .
$$      
These are the symmetric positive definite matrices of rank $\leq 1$.
The Zariski closure of the set $[\Lim_{d,2,1}]$ in $\PP^{d^2-1}$
 is the {\em Veronese variety},  which consists of all
complex symmetric rank $1$ matrices, regarded up to scaling.
\hfill $\diamondsuit$
\end{example} 

\begin{example}[$m=2$] \label{ex:matrix2m} \rm
The signature image for two-step paths equals
\begin{equation}
\label{eq:PVsatisfies}
  \Lim_{d,2,2} \,=\, \bigl\{  \tfrac{1}{2} (X_1 + X_2)  (X_1 + X_2)^T  +  \tfrac{1}{2} (X_1  X_2^T - X_2 X_1^T) \ : X_1,X_2 \in \RR^d \bigr\}  .
\end{equation}  
For a matrix $S = P+Q \in \Lim_{d,2,2}$,
the symmetric part $P$ lies in $\Lim_{d,2,1}$ and the
skew-symmetric part $Q$ has rank $\le 2$. 
The orthogonal complement of  ${\rm span} ( X_1,X_2)$ lies in the kernel of $P$ and of $Q$.
Hence, the  $d \times 2d$ matrix $[\,P\, Q\,]$, built by concatenating $P$ and~$Q$, has rank $\le 2$. 
 
 The signature image $\Lim_{d,2,2}$ is not closed.
 To see this, consider any matrix $S$ with $P = 0$ and $Q$ skew-symmetric of rank $2$.
 Then $S$ satisfies the constraints above but it does not lie in $\Lim_{d,2,2}$.
Otherwise, $P = 0$ would imply $X_1 = -X_2$ and hence $Q = 0$.
However $S$ lies in the closure of $\Lim_{d,2,2}$. One can construct a sequence
of nearly antipodal pairs  $(X^\varepsilon_1, X^\varepsilon_2) \in (\RR^d)^2$ such that
$Q^\varepsilon = Q$ for all $\varepsilon > 0$ and $P^\varepsilon \to 0$    for $\varepsilon \to 0$.  
The same phenomenon arises for polynomial paths of the form (\ref{quadratic}) of degree $m=2$.
A quadratic path with $P = 0$ satisfies $Q = 0$. Such paths
 are {\em tree-like} \cite{HL}. A typical case is the path
  $ t \mapsto  \bigl( t(1{-}t), t (1{-}t) \bigr ) $. 
  \hfill $\diamondsuit$
\end{example} 

The following theorem elucidates the relationship between piecewise linear  paths and polynomial paths
in $\RR^d$. They have the same $d \times d$ signature matrices.

\begin{theorem} \label{thm:realmatrixstuff}
If $k=2$ then the signature image for piecewise linear paths equals
the signature image  for polynomial paths.
In symbols, $\,\Lim_{d,2,m}=\Pim_{d,2,m}  \subset  \RR^{d \times d} \ . $
\end{theorem}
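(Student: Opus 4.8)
The plan is to show both inclusions $\Pim_{d,2,m} \subseteq \Lim_{d,2,m}$ and $\Lim_{d,2,m} \subseteq \Pim_{d,2,m}$ by a direct comparison of the two orbit descriptions. Because both sets are $\mathrm{GL}(d,\RR)$-orbits (under $\mathbf{X} \mapsto \mathbf{X} S \mathbf{X}^T$, extended to all of $\RR^{d\times d}$) of the fixed seed matrices $S_{\rm axis}^{[d,m]}$ and $S_{\rm mono}^{[d,m]}$, it suffices to show that each seed lies in the orbit-closure-free orbit of the other; more precisely, that there exist matrices $A, B \in \RR^{d\times d}$ with
$$
S_{\rm mono}^{[d,m]} \,=\, A \cdot S_{\rm axis}^{[d,m]} \cdot A^T
\qquad\text{and}\qquad
S_{\rm axis}^{[d,m]} \,=\, B \cdot S_{\rm mono}^{[d,m]} \cdot B^T .
$$
Since the nonzero structure of both seeds is confined to the upper-left $m\times m$ block, I may take $A$ and $B$ block-diagonal with identity in the lower-right $(d-m)\times(d-m)$ block, so the problem reduces to the case $d=m$: I must produce $A \in \RR^{m\times m}$ with $\sigma^{(2)}(C_{\rm mono}) = A\,\sigma^{(2)}(C_{\rm axis})\,A^T$, and a matrix for the reverse direction. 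It is cleaner to prove the single statement that the two $\mathrm{GL}(m,\RR)$-orbits coincide, which follows once I exhibit $A \in \mathrm{GL}(m,\RR)$ with $A\,\sigma^{(2)}(C_{\rm axis})\,A^T = \sigma^{(2)}(C_{\rm mono})$; invertibility of $A$ then gives the reverse direction via $A^{-1}$. Extending back to $d \geq m$ by padding then finishes.

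The key computational step is to identify the matrix $A$. I would use the equivariance of $X \mapsto \sigma^{(2)}(X)$ stressed in the text: if $C_{\rm mono} = A\,C_{\rm axis}$ as paths (i.e. the monomial path is obtained by applying a linear-algebraic change of the "step matrix" $\mathbf{X}$), then automatically $\sigma^{(2)}(C_{\rm mono}) = A\,\sigma^{(2)}(C_{\rm axis})\,A^T$. But $C_{\rm axis}$ has step matrix the identity and $C_{\rm mono}$ has step matrix equal to the coefficient matrix of $(t,t^2,\dots,t^m)$ expanded in a suitable basis — so the naive $A$ is not a change of the same curve but a change between two different curves. The honest route is therefore: both $\sigma^{(2)}(C_{\rm axis})$ and $\sigma^{(2)}(C_{\rm mono})$ have the same symmetric part, the all-ones rank-one matrix $\mathbf{1}\mathbf{1}^T$ (checked in Example \ref{ex:d3k2} for $m=3$, true in general since the symmetrization equals $\sigma^{(1)}(\sigma^{(1)})^T = (1,\dots,1)^T(1,\dots,1)$ for any path from $0$ to $(1,\dots,1)$). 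So the difference $\sigma^{(2)}(C_{\rm mono}) - \sigma^{(2)}(C_{\rm axis})$ is skew-symmetric. One then reduces the matching problem to: find $A$ with $A\,\mathbf{1}\mathbf{1}^T A^T = \mathbf{1}\mathbf{1}^T$ (so $A\mathbf{1} = \pm\mathbf{1}$) and $A\,Q_{\rm axis}^{[m,m]} A^T = Q_{\rm mono}^{[m,m]}$, where $Q_{\bullet}$ are the skew parts. The first condition pins down $A$ up to the stabilizer of $\mathbf{1}$; matching the skew parts is then a finite linear-algebra identity over $\QQ$.

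The main obstacle I anticipate is precisely the explicit construction and invertibility of $A$: one must write down the (upper-triangular, lower-triangular, or more cleverly chosen) matrix carrying the axis seed to the monomial seed, and verify it is nonsingular so the argument is symmetric in the two families. A slick alternative avoids picking a single $A$: reparametrize. A polynomial path of degree $m$ and a piecewise-linear path with $m$ pieces each carry $dm$ parameters, and Chen's formula (\ref{eq:PL1}) for piecewise-linear signatures together with the integral formula of Example \ref{ex:cmp} shows both images equal $\{\mathbf{X} S \mathbf{X}^T : \mathbf{X}\in\RR^{d\times m}\}$ where $S$ ranges over $m\times m$ matrices with symmetric part $\tfrac12 \mathbf{1}\mathbf{1}^T$ — but this only works if the two seeds are $\mathrm{GL}(m,\RR)$-equivalent, so one is back to the same point. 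I would therefore commit to constructing $A$ explicitly: observe that the rows of the coefficient matrix of the Vandermonde-type change from $\{t,\dots,t^m\}$ to the "bump" basis of increments $\{t\mathbf{1}_{[0,1/m]} + \cdots\}$ of $C_{\rm axis}$ can be arranged in triangular form with nonzero diagonal, hence $A \in \mathrm{GL}(m,\RR)$, and then verify $A\,S_{\rm axis}^{[m,m]} A^T = S_{\rm mono}^{[m,m]}$ directly. Once that single identity is in hand, conjugating the orbit defining $\Lim_{d,2,m}$ by (the padding of) $A$ turns it into the orbit defining $\Pim_{d,2,m}$, giving $\Lim_{d,2,m} = \Pim_{d,2,m}$, and the proof is complete.
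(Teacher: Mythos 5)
Your proposal correctly reduces the theorem to the statement the paper also reduces to: find a real invertible $A \in \mathrm{GL}(m,\RR)$ with $A\,S_{\rm axis}^{[m,m]}\,A^T = S_{\rm mono}^{[m,m]}$, after which the $d>m$ case follows by padding with an identity block. You also correctly identify the two natural sub-constraints (match the rank-one symmetric parts, then match the skew parts) and correctly observe that the symmetric parts already agree, so only the skew parts need to be reconciled by an $A$ stabilizing $\mathbf{1}$ up to sign. All of this is the same decomposition the paper uses.

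However, the proposal stops exactly at the hard part and does not supply it. Two specific problems:

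\textbf{The ``Vandermonde-type change'' does not exist.} You suggest constructing $A$ as the coefficient matrix of a basis change between $\{t,t^2,\dots,t^m\}$ and the ``bump'' increments of the piecewise linear path. But these are two different function spaces, not two bases of one space, and you yourself flag earlier in the proposal that $C_{\rm mono}$ and $C_{\rm axis}$ are not related by a linear map $\RR^m \to \RR^m$ acting on the target of the paths. Equivariance $\sigma^{(2)}(\mathbf{X}\cdot C) = \mathbf{X}\,\sigma^{(2)}(C)\,\mathbf{X}^T$ only applies to linear maps acting on $\RR^m$, not to reparametrizations or interpolations of the time variable $t$. There is no matrix $A$ with $A\,C_{\rm axis} = C_{\rm mono}$ as paths, and the proposed ``triangular with nonzero diagonal'' construction does not produce one.

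\textbf{Existence (and realness) of $A$ is asserted, not proved.} You say ``matching the skew parts is then a finite linear-algebra identity over $\QQ$'', but this is precisely where all the work is. One must simultaneously preserve $P_{\rm axis}^{[m,m]} = \tfrac12\mathbf{1}\mathbf{1}^T$ and carry $Q_{\rm axis}^{[m,m]}$ to $Q_{\rm mono}^{[m,m]}$ by congruence, and one must do so over $\RR$. The paper achieves this by induction on $d$: at each step one produces a bordered matrix $\tilde H$ whose new column $(x,y)$ is determined by $d$ linear equations plus one quadratic equation in $y$; showing $y$ can be taken real requires a determinant identity relating $y^2$ to $\det(M^{[d]})$, and positivity of that determinant is established via the Cauchy-matrix formula (\ref{eq:cauchymatrix}). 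None of this is ``a finite identity over $\QQ$'' that one can simply check; it is an induction with a genuine positivity input. Without some version of that argument, the proposal has a gap precisely where you flagged ``the main obstacle''.
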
 

\begin{proof} We abbreviate the matrices in (\ref{eq:SPVpencil}) by
$ A:= S_{\rm axis}^{[d,m]}$ and $M := S_{\rm mono}^{[d,m]} $.
It is enough to treat the case $m=d$, since this proves the result for all $m \le d$.
We shall build an invertible matrix $H$ such that $H M H^T = A$. 
This will prove that $A$ and $M$ have the same orbit under the action by ${\rm GL}(d,\RR)$
which clearly yields the desired set identity in $\RR^{d \times d}$.
The proof that follows proceeds by induction on~$d$; to this end, introduce the ``refined'' notation $M^{[d]} = S_{\rm mono}^{[d,d]}$.
For $d=1$ we have $A=M=(\tfrac{1}{2})$ and  $H=(1)$.
To go from dimension $d$ to $d+1$ we write
\[ M^{[d+1]}
\,\,= \,\,
\left(\begin{array}{@{}c|c@{}}
  M &
  \begin{matrix}
  \tfrac{d+1}{d+2} \\
  \vdots \\
  \end{matrix}
\\ \hline
  \begin{matrix}
  \tfrac{1}{d+1} \dots 
    \end{matrix}
  & \tfrac{d+1}{2(d+1)}
\end{array}\right) \,\, = \,\,
\left(\begin{array}{@{}c|c@{}}   M &
  \begin{matrix}    \vdots \\    \end{matrix}
\\ \hline  \begin{matrix}    \dots      \end{matrix}
  & \tfrac{1}{2}
\end{array}\right).
\]
Here $M = M^{[d]}$.
Our goal is to find a row vector $x \in \RR^d$ and a scalar $y \in \RR$ with
\begin{equation} \label{equ:matrixidentityforLP}
\left(\begin{array}{@{}c|c@{}}
  \begin{matrix}
  x^T 
  \end{matrix}
  & y
\\ \hline
  H
  &
  \begin{matrix}
  0^T
  \end{matrix}
\end{array}\right)
\left(\begin{array}{@{}c|c@{}}
  M &
  \begin{matrix}
  \vdots \\
  \end{matrix}
\\ \hline
  \begin{matrix}
  \dots
    \end{matrix}
  & \tfrac{1}{2} 
\end{array}\right)
\left(\begin{array}{@{}c|c@{}}
  \begin{matrix}
  x 
  \end{matrix}
  & H^T 
\\ \hline
  \begin{matrix}
  y
  \end{matrix}
  &0 
\end{array}\right)
\,\,=\,\,
\left(\begin{array}{@{}c|c@{}}
  \begin{matrix}
  * 
  \end{matrix}
  & **
\\ \hline
  \begin{matrix}
  o
  \end{matrix}
  & H M H^T 
\end{array}\right)
\,\, =\,\,
\begin{tiny}
\left(\begin{array}{@{}c|c@{}}
  \begin{matrix}
  \tfrac{1}{2}
  \end{matrix}
  & 1  \cdots 1
\\ \hline
  \begin{matrix}
  0 \\
  \vdots \\
  0
  \end{matrix}
  & A  
\end{array}\right)
\end{tiny}
\,\, = \,\, S_{\rm axis}^{[d+1,m]} .
\end{equation}
The identity in the middle requires that  $(*) = \tfrac{1}{2}$, $(**) = (1 \cdots 1)$ and $o = 0^T= (0, \ldots, 0)^T$. 
We carry out the (block) matrix multiplication, starting with
\[
\left(\begin{array}{@{}c|c@{}}
  M &
  \begin{matrix}
  \vdots \\
  \end{matrix}
\\ \hline
  \begin{matrix}
  \dots 
  \end{matrix}
  & \tfrac{1}{2} 
\end{array}\right)
\left(\begin{array}{@{}c|c@{}}
  \begin{matrix}
  x 
  \end{matrix}
  & H^T 
\\ \hline
  \begin{matrix}
  y
  \end{matrix}
  &0 
\end{array}\right)
=
\left(\begin{array}{@{}c|c@{}}
  \begin{matrix}
  M x + y \Big( \vdots \Big)
  \end{matrix}
  & M H^T
\\ \hline
  (\cdots)x + \tfrac{1}{2} y
  &
  \begin{matrix}
  (\cdots)H^T
  \end{matrix}
\end{array}\right) .
\]
The requirement $o = 0^T$ in (\ref{equ:matrixidentityforLP}) implies
$\,    H \Big(M x + y \Big( \vdots \Big)\Big) = 0 \,$ and hence $\, M x + y \Big( \vdots \Big) = 0 $.
With this, the condition $(*) = \tfrac{1}{2}$  translates into one quadratic equation in
$d+1 $ unknowns $(x,y)$:
\begin{equation}
\label{eq:onequadratic}
          y ((\cdots)x + \tfrac{1}{2} y)\,\, =\,\, y \left( \sum_{i=1}^d \tfrac{i}{d+1+i} x_i + \tfrac{1}{2} y \right)
          \,\, =\,\, 1/2.
\end{equation}
The final condition $(**) = (1,\ldots 1)$ gives $d$  linear equations for $(x,y) \in \RR^{d+1}$:
\begin{equation} \label{solve4x}
        x^T M H^T + y ( \cdots) H^T \,=\, (1, \ldots , 1) \quad {\rm in} \,\,\, \RR^d .
\end{equation}
Here $(\cdots) = (\tfrac{1}{d+2},..., \tfrac{d}{2d+1})$. 
The solution set to (\ref{solve4x}) is a line in $\RR^{d+1}$. For
any given $y \in \RR$, there is a unique solution $x=x(y)$. 
(It is an a fortiori consequence of (\ref{eq:cauchymatrix}) below that $M$ is invertible, as is $M H^T$ using the induction hypothesis.) 
Substituting this  into (\ref{eq:onequadratic}), we obtain a quadratic equation in one variable $y$.
 Let $y$ be one of its complex solutions. We claim that $y$ is real.
 
 Let $\tilde H$ be the desired $(d+1) \times (d+1)$ transformation matrix on the left of
  (\ref{equ:matrixidentityforLP}). By Laplace expansion with respect to the last column,
  its determinant equals $\,\det \tilde H = \pm y \det H$. Taking determinants on
  both sides of  (\ref{equ:matrixidentityforLP}), and using $\det H = \det H^T \not= 0$, we conclude
  $$ y^2 \cdot \det (H)^2 \cdot  \det \bigl( M^{[d+1]} \bigr)  \, =\, \tfrac{1}{2} \det (A) \,= \,\frac{1}{2^{d+1}} . $$
At this point it suffices to know that  $\det (M^{[d]}) >0$ for all $d \in \mathbb{N}$. In fact, we claim 
\begin{equation}
\label{eq:cauchymatrix}
 {\rm det}(M^{[d]}) \,\, = \,\, d! \cdot \frac{\prod_{1 \leq i < j \leq d} (j-i)^2}{\prod_{i=1}^d \prod_{j=1}^d (i+j)}. 
 \end{equation}
This identity holds because $M^{[d]}$ becomes a {\em Cauchy matrix} after we divide its $j$th column by $j$.
This positivity of ${\rm det}(M^{[d]})$ implies that $y^2$ is positive and hence $y$ is real.  From (\ref{solve4x}),
we  obtain a (real) vector $x=x(y)$. The resulting matrix $\tilde H$ completes the induction step.
\end{proof}

\subsection{Determinantal Varieties}

 The precise characterization of signature images is a subtle matter. 
We already saw this in Example \ref{ex:matrix2m}. In this article we
address the easier problem of characterizing the polynomial equations
that vanish on these images. In other words, we study the signature
varieties and their ideals.  The inequalities that hold on
the signature images are left to future research.

Let $M_{d,m}$ denote the prime ideal of homogeneous polynomials that vanishes on
$\,\Lim_{d,2,m}=\Pim_{d,2,m} $. This lives in
the polynomial ring over $\QQ$ with $d^2$ variables, namely the
$\binom{d+1}{2}$ variables $p_{ij}$ and the $\binom{d}{2}$ variables $q_{ij}$.
The corresponding variety $\mathcal{M}_{d,m}$ lives in
the projective space $\PP^{d^2-1}$  of $d \times d$ matrices.
 This signature variety is the Zariski closure of the signature image.
Our main result in this section characterizes this variety in terms of 
the $d \times 2d$ matrix $[P \, Q ]$.

\begin{theorem} \label{thm:matricesmain}
For each $d$ and $m$, the following varieties in 
$\,\PP^{d^2-1}$ coincide:
\begin{enumerate}
\item The variety  of signature matrices 
 of piecewise linear paths with $m$ segments.
\vspace{-0.17cm}
\item The variety of signature matrices 
 of polynomial paths of degree $m$.
\vspace{-0.17cm}
\item The variety of matrices $\,S = P+Q$, with $P$ symmetric and $Q$ skew-symmetric, such that 
$\,{\rm rank}(P) \leq 1\,$ and $\,{\rm rank} \bigl( [\,P\, Q\,] \bigr) \leq m$.
\end{enumerate}
For each fixed value of $d$, we have the following chain of  varieties in $\PP^{d^2-1}$:
\begin{equation}
\label{eq:Mchain}
\mathcal{M}_{d,1} \subset \mathcal{M}_{d,2} \subset \mathcal{M}_{d,3} \subset \,
\cdots \,\subset \mathcal{M}_{d,d} = \mathcal{M}_{d,d+1}
= \mathcal{M}_{d,d+2} = \cdots
\end{equation}
Fix $m \leq d$. Then $\mathcal{M}_{d,m}$  is irreducible of dimension $md-\binom{m}{2}-1$.
If $m$ is odd then its ideal $M_{d,m}$ is generated by
 the $2 $-minors of $P$ and  the $(m+1) $-pfaffians of $Q$.
 If $m$ is even then $M_{d,m}$ is generated by
 the $2 $-minors of $P$,  the $(m+2) $-pfaffians of $Q$,
 and the entries in $P \cdot C_m(Q)$  where $C_m(Q)$ is the circuit matrix
 of $m$-pfaffians.
   \end{theorem}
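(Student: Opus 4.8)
The plan is to establish the theorem in three stages: first proving the set-theoretic description (1)=(2)=(3) at the level of the signature image, then deducing the Zariski closure (the variety) equality, and finally identifying the ideal. The equality of (1) and (2) is already Theorem~\ref{thm:realmatrixstuff}, so the first real task is to show that the orbit in (1) coincides with the matrix set in (3). One inclusion is routine: starting from $S = {\bf X} S_{\rm axis}^{[d,m]} {\bf X}^T$ with ${\bf X}$ having at most $m$ nonzero columns $X_1,\dots,X_m$, the symmetric part is $\tfrac12 (X_1+\cdots+X_m)(X_1+\cdots+X_m)^T$, which has rank $\le 1$, and both $P$ and $Q$ factor through the $d\times m$ matrix $[X_1\,\cdots\,X_m]$, so the concatenation $[P\,Q]$ has rank $\le m$. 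For the reverse inclusion, given $S=P+Q$ with $\operatorname{rank}(P)\le 1$ and $\operatorname{rank}([P\,Q])\le m$, I would choose a rank-$\le m$ factorization $[P\,Q] = Y\cdot[B\,C]$ with $Y\in\RR^{d\times m}$; then $P = YBY^T$ (after symmetrizing, absorbing into $Y$) with $B$ symmetric rank $\le 1$ and $C$ skew, so it reduces to the $d=m$ case, where one realizes any such $S$ as ${\bf X} S_{\rm axis}^{[m,m]}{\bf X}^T$ — this is essentially the converse direction asserted after \eqref{preP22}, and Chen's formula \eqref{eq:PL1} plus a dimension count (the orbit of $S_{\rm axis}^{[m,m]}$ has the expected dimension $m^2-\binom{m}{2}$, matching $\dim$ of the rank conditions) closes the gap.

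Next I would pass to projective varieties. The signature \emph{variety} $\mathcal{M}_{d,m}$ is by definition the Zariski closure of $[\Lim_{d,2,m}]$; since the determinantal/Pfaffian conditions in (3) are Zariski-closed and contain the image, $\mathcal{M}_{d,m}$ is contained in the variety $\mathcal{V}$ cut out by them. For the reverse containment I would show $\mathcal{V}$ is irreducible of the same dimension as $\mathcal{M}_{d,m}$, whence they must coincide. Irreducibility of $\mathcal{V}$: it fibers over the Veronese variety of rank-$\le1$ symmetric matrices $P=\tfrac12 v\otimes v$ (together with the point $P=0$), and over a fixed such $P$ the condition $\operatorname{rank}([P\,Q])\le m$ on the skew part $Q$ cuts out an irreducible variety — away from $P=0$ it says $Q$ has rank $\le m-1$ modulo the row/column span of $v$, a determinantal-type locus. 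The dimension count $md-\binom m2-1$ can be done directly on the parametrization: the orbit map ${\bf X}\mapsto {\bf X} S_{\rm axis}^{[d,m]}{\bf X}^T$ has fibers of dimension $d^2 - (md-\binom m2)$ (the stabilizer, computed via the block structure of $S_{\rm axis}^{[d,m]}$), giving image dimension $md-\binom m2$, hence $md-\binom m2-1$ projectively. The stabilization chain \eqref{eq:Mchain} is then immediate: the rank condition $\operatorname{rank}([P\,Q])\le m$ is vacuous once $m\ge d$ (the matrix has only $d$ rows), and $\mathcal{M}_{d,d}$ is the full universal variety $\mathcal{U}_{d,2}$.

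The ideal statement is the main obstacle. The natural candidate ideal $I$ is: the $2$-minors of $P$; the $(m+1)$-Pfaffians of $Q$ (for $m$ odd) resp.\ $(m+2)$-Pfaffians together with the entries of $P\cdot C_m(Q)$ (for $m$ even); these obviously vanish on $\mathcal{M}_{d,m}$, so $I\subseteq M_{d,m}$. To prove equality I would show $I$ is prime (equivalently, radical and with $\mathcal{V}(I)$ irreducible of the right dimension, so that $I$ is the full vanishing ideal). The subtlety in the even case is that the $(m+1)$-Pfaffians of a skew matrix vanish identically for parity reasons, so rank $\le m$ of $[P\,Q]$ with $P$ rank $\le 1$ forces a degenerate interaction: modulo the $2$-minors of $P$ one may assume $P=\tfrac12 v\otimes v$, and then the extra generators $P\cdot C_m(Q)$ encode that $v$ lies in the column span of the $m$-Pfaffian vector of $Q$, which is exactly what drops the rank of $[P\,Q]$ from $m+1$ to $m$. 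I would verify primality either by exhibiting a Gröbner basis (the paper already uses Macaulay2 for such computations, and a $<$-generic initial ideal argument combined with the irreducibility established above, via a ``prime plus equidimensional of the same degree'' criterion, should suffice), or by reducing to known results on ideals of Pfaffians and their linkage to determinantal ideals. The hardest single point is checking that the listed generators actually \emph{generate} (not merely cut out set-theoretically) the radical ideal in the even case — this is where I expect to lean on an explicit Gröbner basis computation over $\QQ$, combined with the dimension and irreducibility already in hand to certify that the computed ideal is prime and hence equals $M_{d,m}$.
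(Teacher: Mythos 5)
Your first stage contains a genuine error: you claim that the orbit $\Lim_{d,2,m}$ coincides with the matrix \emph{set} in item~3, and you try to prove the reverse inclusion by realizing every $S=P+Q$ satisfying the rank conditions as ${\bf X}\,S_{\rm axis}^{[m,m]}\,{\bf X}^T$. This is false, and the paper explicitly refutes it in Example~\ref{ex:matrix2m}: for $m=2$, any $S$ with $P=0$ and $Q$ skew of rank~$2$ lies in the locus of item~3 but not in $\Lim_{d,2,2}$ (since $P=0$ forces $X_1=-X_2$ and hence $Q=0$). The equality in the theorem is between \emph{varieties} (Zariski closures), not between the image and the closed set; the factorization $[P\,Q]=Y[B\,C]$ cannot rescue this, because the failure is one of closure, not of rank. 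Your second paragraph does pivot to the correct strategy (the locus in~3 is Zariski-closed, contains the image, and so it suffices to match dimensions once both sides are known irreducible), but you should discard the first-paragraph claim entirely rather than present it as an earlier ``stage.''

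Beyond that, two steps are left unestablished where the paper has concrete arguments. For irreducibility of the determinantal--Pfaffian locus, your ``fiber over the Veronese'' sketch glosses over the even/odd dichotomy: for $m$ even, $\operatorname{rank}(Q)\le m$ together with $\operatorname{rank}([P\,Q])\le m$ forces $\operatorname{image}(P)\subseteq\operatorname{image}(Q)$ on the open stratum, and it is exactly this interaction that your fibering argument does not address. The paper instead classifies normal forms of T-even matrix pencils $(P,Q)$ under congruence (citing Thompson and Lancaster--Rodman); the locus in~3 is then identified as a single orbit closure, which gives irreducibility, the dimension (by parametrizing rank-bounded skew matrices $Q$ and then the remaining freedom for $P$), and the odd/even distinction in one stroke. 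For the ideal, your Gr\"obner-basis plan matches the paper's treatment of the even case, but you miss the much simpler observation for $m$ odd: the $2$-minors of $P$ and the $(m+1)$-Pfaffians of $Q$ are each known to generate prime ideals, and since they involve \emph{disjoint} sets of variables ($p_{ij}$ vs.\ $q_{ij}$), their sum is automatically prime --- no Gr\"obner computation needed.
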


The matrix $Q$ is skew-symmetric. Its principal minors of odd size are zero.
Each principal minor of even size $m$ is the square of a polynomial
of degree $m/2$ in the $q_{ij}$. These polynomials are the {\em $m$-pfaffians} of $Q$.
The {\em circuit matrix} $C_m(Q)$ has format
$d \times \binom{d}{m+1}$, with columns indexed by the
$(m{+}1)$-element subsets $I$ of $\{1,2,\ldots,d\}$.
The entry in row $i$ and column $I$ is $0$ unless
 $i \in I$. In that case it equals the $m$-pfaffian of $Q$ that is indexed by
$I \backslash \{ i \} $. Here the pfaffian must be taken with the correct sign.
 For instance, if $d=6$ and $m=4$ then
$$ C_4(Q) =  \begin{tiny} \begin{bmatrix}
0 & \phantom{-}q_{34} q_{56}-q_{35} q_{46}+q_{36} q_{45} 
  &  \phantom{-}q_{24} q_{56}-q_{25} q_{46}+q_{26} q_{45} 
& \cdots & 
    q_{23} q_{45}-q_{24} q_{35}+q_{25} q_{34} \\
  -q_{34} q_{56} + q_{35} q_{46} - q_{36} q_{45} & 0 & 
\phantom{-}   q_{14} q_{56} - q_{15} q_{46} + q_{16} q_{45} 
& \cdots &
   q_{13} q_{45} - q_{14} q_{35} + q_{15} q_{34} \\
 - q_{24} q_{56}+q_{25} q_{46}-q_{26} q_{45} & 
 - q_{14} q_{56}+q_{15} q_{46}-q_{16} q_{45} & 0 &
 \cdots &
   q_{12} q_{45}-q_{14} q_{25}+q_{15} q_{24} \\
    -q_{23} q_{56}+q_{25} q_{36}-q_{26} q_{35} & 
    -q_{13} q_{56}+q_{15} q_{36}-q_{16} q_{35} &
    -q_{12} q_{56}+q_{15} q_{26}-q_{16} q_{25} & 
\cdots &
     q_{12} q_{35}-q_{13} q_{25}+q_{15} q_{23}  \\
       -q_{23} q_{46} + q_{24} q_{36} - q_{26} q_{34} & 
  -q_{13} q_{46} + q_{14} q_{36} - q_{16} q_{34} & 
    -q_{12} q_{46} + q_{14} q_{26} - q_{16} q_{24} &  
\cdots &
 q_{12} q_{34} - q_{13} q_{24} + q_{14} q_{23} \\
    -q_{23} q_{45} + q_{24} q_{35} - q_{25} q_{34} & 
    -q_{13} q_{45} + q_{14} q_{35} - q_{15} q_{34} &
    -q_{12} q_{45} + q_{14} q_{25} - q_{15} q_{24} 
&    \cdots & 0 
\end{bmatrix} \! . \end{tiny}
$$
The entries of this matrix are the $2$-pfaffians of the $6 \times 6$ matrix $Q$.
The $4$-pfaffians are homogeneous quartics with $15$ terms.
For any $m$ and $d$,  the columns of the
circuit matrix $C_d(Q)$ are canonical generators for the kernel of the matrix $Q$,
provided $Q$ has rank $m$. Think of Cramer's rule for ${\rm ker}(Q)$.
These generators correspond to {\em circuits} in matroid theory \cite{OXL}.

\begin{example}[$d=3,m=2$] \label{ex:axismono32} \rm
The variety $\mathcal{M}_{3,2}$ has dimension $4$ and degree $6$ in $\PP^8$. It is 
the Zariski closure of the common orbit of the following two matrices under the ${\rm GL}(3,\RR)$-action:
\begin{equation}
\label{eq:axismono32}
S_{\rm axis}^{[3,2]}\,=\, \begin{pmatrix}
\,\, \frac{1}{2} & 1 & 0 \, \medskip \\
\,\, 0 & \frac{1}{2} & 0 \,\medskip \\
\,\, 0 & 0 & 0 \, \,\end{pmatrix}
\qquad \hbox{or} \qquad\,
S_{\rm mono}^{[3,2]}\,=\, 
 \begin{pmatrix} 
\,\,\frac{1}{2} & \frac{2}{3} & 0  \,\medskip \\
\,\,\frac{1}{3} & \frac{2}{4} & 0 \,\medskip \\
\,\, 0 & 0 & 0 \, \,\end{pmatrix}.
\end{equation}
The variety $\mathcal{M}_{3,2}$ is cut out by  the $2 \times 2$ minors of  $P = (p_{ij})$ and
the $3 \times 3$ minors~of 
$$ [P \, Q ] \quad = \quad
\begin{bmatrix}
 \, p_{11} & p_{12} & p_{13} & 0  & q_{12} & q_{13} \,\\
\, p_{12} & p_{22} & p_{23} & -q_{12}  & 0 & q_{23} \,\\
\,  p_{13} & p_{23} & p_{33} & -q_{13} & \! -q_{23} & 0 \,
 \end{bmatrix}.
 $$
 However, these do not generate the prime ideal $M_{3,2}$.
For that we need
 nine quadrics, namely the six $2$-minors of $P$ and the three entries of
 $P \cdot C_2(Q)$ where $C_2(Q) = \bigl( q_{23}, - q_{13}, q_{12}\bigr)^T$.
 The fourfold $\mathcal{M}_{3,2}$ contains the 
 Veronese surface $\mathcal{M}_{3,1} \simeq \PP^2$, which lives in
 a $\PP^5$ inside $\PP^8$.
 The ideal $M_{3,1}$ of that surface is generated by $\{q_{12},q_{13},q_{23}\}$
and the $2$-minors of~$P$.
\hfill $\diamondsuit$  \end{example}

 \begin{proof}[Proof of Theorem  \ref{thm:matricesmain}]
 We consider pencils spanned by a symmetric matrix $P$
 and a skew-symmetric matrix $Q$. 
 These are called {\em T-even matrix pencils} in the
 linear algebra literature. Their normal forms
 are found e.g.~in \cite{LR, Thom}. They correspond to
  normal forms of $S = P+Q$ under the ${\rm GL}(d,\CC)$-action 
  by {\em congruence} $S \mapsto {\bf X} S {\bf X}^T$.
By Theorem  \ref{thm:realmatrixstuff},
  our two canonical pencils in    (\ref{eq:SPVpencil}) are in the
  same orbit. It consists of rank $m$ pencils $(P,Q)$ where $P$ has rank~$1$.
  The closure of this orbit also contains orbits where the rank drops from $m$
  to $\leq m-1$.  
  
  If $m$ is odd then $Q$ has  rank $m-1$.
  There is no constraint imposed on the rank $1$ matrix~$P$.
  The normal form is the direct sum of 
one pencil $  \bigl((1),(0) \bigr)$  and $d-m$ zero pencils
and $\frac{m-1}{2}$ pencils
\begin{tiny} $\biggl( \begin{pmatrix} 0 & 0 \\ 0 & 0 \end{pmatrix},
  \begin{pmatrix} \,0 & 1 \\ \! -1 & 0 \end{pmatrix} \biggr)$. \end{tiny}
  If $m$ is even then ${\rm image}(P) \subseteq {\rm image}(Q)$ holds
  on the orbit in question.   The normal form is the direct sum of one pencil
\begin{tiny} $\biggl( \begin{pmatrix} 1 & 0 \\ 0 & 0 \end{pmatrix},
  \begin{pmatrix} \,0 & 1 \\ \! -1 & 0 \end{pmatrix} \biggr)$ \end{tiny}
  and $\frac{m-2}{2}$ pencils
\begin{tiny} $\biggl( \begin{pmatrix} 0 & 0 \\ 0 & 0 \end{pmatrix},
  \begin{pmatrix} \,0 & 1 \\ \! -1 & 0 \end{pmatrix} \biggr)$. \end{tiny}
The rank constraints in item 3 characterize the closure of this orbit.
 We conclude that the variety
in item 3 is indeed irreducible and equals those in items 1 and~2.

To prove our formula for the dimension, we start with two binomial identities:
$$ \begin{matrix} md - \binom{m}{2} \,\, = \,\,  \bigl[ \binom{d}{2} - \binom{d-m}{2} \bigr]\, + m 
\,\, = \,\, \bigl[ \binom{d}{2} - \binom{d-(m-1)}{2} \bigr] \, + d. \end{matrix} $$
We claim that this is the dimension of the affine cone over $\mathcal{M}_{d,m}$.
The parenthesized expressions give the dimension of the variety of
skew-symmetric $d \times d$ matrices $Q$ of rank $\leq m$. 
When $m$ is odd then the formula on the right is used.
In that case we can set $P =v v^T $ where $v$ is any column vector in $\RR^d$.
When $m$ is even then the formula in the middle is used.
Given $Q$, we now choose $v$ in the column span of $Q$, so there are 
$m$ degrees of freedom for~$P$.

It remains to establish the assertions about the prime ideal $M_{d,m}$
associated with $\mathcal{M}_{d,m}$.
The  polynomials we list are in that ideal, and they cut out the
variety $\mathcal{M}_{d,m}$ set-theoretically.  We must show that they generate a prime ideal.
If $m$  is odd then this follows from the well-known fact that the 
$2$-minors of $P$  and the $(m+1)$-pfaffians of $Q$ both generate prime ideals.
Since they share no variables, the sum of these two ideals is also prime.
If $m$ is even then we must argue that incorporating the
entries of $P \cdot C_m(Q)$, which are homogeneous of bidegree $(1,m/2)$, 
does not destroy primality. This can be shown by computing the reduced 
Gr\"obner basis with respect to the lexicographic term order,
where the matrix entries are ordered row-wise with 
the $p_{ij}$ coming before the $q_{ij}$.
The leading terms in that Gr\"obner basis are square-free.
The initial ideal is radical, and hence so is the ideal in question.
Since it cuts out the correct irreducible variety,
Hilbert's Nullstellensatz ensures that the ideal is prime. 
 \end{proof}

\begin{table}[h] 
\begin{tabular}{ | l | l | l | l | l | l | l | l | p{1.5cm} |} \hline  $d$ & $k$ & $m$ & $a$ & 
$\!\dim\!$& 
$\! \deg \! $ & $\!$gens$\!\!$ \\ 
\hline 2 & 2 & 1 &2  & 1 & 2 & 1 \\
\hline 2 & 2 & 2 &3 & 2 & 2 & 1 \\
\hline 3 & 2 & 1 &5  & 2 & 4 & 6 \\
\hline 3 & 2 & 2 & 8 & 4 & 6 & 9 \\
\hline 3 & 2 & 3 & 8 &  5& 4 &6  \\
\hline 4 & 2 & 1 & 9 &  3&  8 & 20 \\
\hline 4 & 2 & 2 & 15 &  6&  20 & 36  \\
\hline 4 & 2 & 3 & 15&  8& 16 & 21 \\
\hline 4 & 2 & 4 & 15 &  9& 8 & 20 \\
\hline 5 & 2 & 1 & 14 &  4& 16 &  50\\
\hline 
 \end{tabular} 
\quad 
\begin{tabular}{ | l | l | l | l | l | l | l | l | p{1.5cm} |} \hline  $d$ & $k$ & $m$ & $a$ &
$\!\dim\!$ & $\!\deg\!$ & $\!$gens$\!\!$ \\ 
\hline 5 & 2 & 2 &  24&  8&  70&  100 \\
\hline 5 & 2 & 3 &  24&  11& 80 &  55\\
\hline 5 & 2 & 4 &  24&  13&  40&  50, 5\\
\hline 5 & 2 & 5 & 24 &  14&  16&  50\\
\hline 6 & 2 & 1 & 20 &  5  & 32 & 105 \\
\hline 6 & 2 & 2 & 35 &  10  &  252 & 225 \\
\hline 6 & 2 & 3 &  35&  14  & 448 & 120  \\
\hline 6 & 2 & 4 &  35&  17  &  280 & $\!$ 105, 36 $\!$ \\
\hline 6 & 2 & 5 &  35&  19  &  96 & 105, 1\\
\hline 6 & 2 & 6 &  35&  20& 32 & 105  \\
\hline 
 \end{tabular} 
\caption{\label{tab:Mdm} Invariants of the ideal $M_{d,m}$ that defines the variety
      of signature matrices  $\mathcal{M}_{d,m}$.} \medskip
  \end{table}

Table \ref{tab:Mdm} lists invariants of the prime ideals $M_{d,m}$.
These can be computed in {\tt Macaulay2} \cite{M2}. The same format is used
in later sections when we pass from matrices to tensors. The column ``$a$''
denotes the {\em ambient dimension}. This refers to
the smallest  linear subspace of $\PP^{d^2-1}$ containing $\mathcal{M}_{d,m}$.
 For instance, $M_{3,1}$ is an ideal in a polynomial
ring with $9$ unknowns and it contains $3$ linear forms. Hence, the ambient
dimension is $5 = 9-3-1$. Indeed,  the  quadratic Veronese surface $\mathcal{M}_{3,1}
\simeq \PP^2$ lives in $\PP^5$. The column ``dim'' displays
${\rm dim}(\mathcal{M}_{d,m}) = md-\binom{m}{2} -1$.
The column ``deg'' displays the degree of $\mathcal{M}_{d,m}$.
For $m$ odd, this is $ 2^{d-1}$ times the degree of the
variety of skew-symmetric matrices of  rank $m-1$. The latter is 
a Catalan number when $m=3$, which explains
${\rm deg}(\mathcal{M}_{d,3})  = \frac{2^{d-1}}{d-1} \binom{2d-4}{d-2} $.
Note also ${\rm deg}(\mathcal{M}_{d,2})  =  \binom{2d-2}{d-1} $.
The last column ``gens'' lists $\mu_2,\mu_3,\ldots$
where $\mu_i$ is the number of minimal generators of $M_{d,m}$ in
degree $i$. For instance, $M_{6,3}$ is generated by
$120$ quadrics, while $M_{6,4}$ is generated by 
$105$ quadrics and $36$ cubics. The former express ${\rm rank}(P)=1$.
The latter come from  $P \cdot C_4(Q)$.

\section{Universal Varieties from Free Lie Algebras} \label{sec:universal}

The $k$th signature tensor of a path $X$ in $\RR^d$ is an element $\sigma^{(k)}(X)$ of
the tensor space~$(\RR^d)^{\otimes k}$. We denote the
coordinates on this space by $\sigma_{i_1 i_2 \cdots i_k}$
for $ 1 \leq i_1,i_2,\ldots,i_k \leq d$. We write $\RR[\sigma^{(k)}]$ for the polynomial ring
generated by these $d^k$ coordinates. This is the 
homogeneous coordinate ring of the projective space $\,\PP^{d^k-1}$,
taken over either $\RR$ or $\CC$.
The general linear group $G = {\rm GL}(d,\RR)$ acts naturally 
on the spaces $(\RR^d)^{\otimes k}$ and $\PP^{d^k-1}$,
and on the ring $\RR[\sigma^{(k)}]$.

Signature matrices $\sigma^{(2)}(X)$ were characterized in Section \ref{sec:vsm}.
In this section we deal with signature tensors of order $k \geq 3$. 
We shall derive their signature varieties in $\PP^{d^k-1}$.
This rests on the theory of free Lie algebras \cite{Reu}.

\subsection{Tensor Algebra, Lie Polynomials and Shuffles}

For any positive integer $n$, we consider the {\em truncated tensor algebra}
\begin{equation}
\label{eq:Tn} T^n(\RR^d) \,\,\, = \,\,\, \bigoplus_{k=0}^n \,(\RR^d)^{\otimes k} . 
\end{equation}
This is a non-commutative algebra whose multiplication is the tensor (or {\it concatenation}) product,
where tensors of order $\geq n+1$ are set to zero.
We write $T_\nu^n(\RR^d)$ for the hyperplane in $T^n(\RR^d)$
that consists of tensor polynomials with constant term $\nu$.
The standard basis of $\RR^d$ is denoted by
$e_1,e_2,\ldots,e_d$. The induced standard basis of $T^n(\RR^d)$ is abbreviated
$$ \qquad e_{i_1 i_2 \cdots i_k} \,\,  := \,\, 
e_{i_1} \otimes e_{i_2} \otimes \cdots \otimes e_{i_k} 
\qquad {\rm for} \,\, 1 \leq i_1 , i_2 , \ldots , i_k \leq d 
\,\, \,{\rm and} \,\,\,0 \leq k \leq n . $$
We refer to basis vectors with $k \geq 1$ as {\em words} and we 
often write them simply as $i_1 i_2 \cdots i_k$. The empty word spans the 
one-dimensional space $(\RR^d)^{\otimes 0}  \cong \RR$ of constant terms in $T^n(\RR^d)$.

Following \cite[\S 4]{HK} and \cite{Reu}, 
 the truncated tensor algebra $T^n(\RR^d)$ also carries several
 additional algebraic structures. First, it is a {\em Lie algebra}, where the Lie bracket is the commutator
 $$ [P, Q ] \,\, = \,\, P \otimes Q - Q \otimes P \qquad {\rm for} \,\,\, P,Q \in T^n(\RR^d). $$
Next, $T^n(\RR^d)$ is a commutative algebra with respect to the {\em shuffle product}  
$\shuffle$, where words of length $\geq n+1$ are set to zero.
The shuffle product of two words of lengths $r$ and $s$ (with $r+s \le n$) is the sum over the 
$\binom{r+s}{s}$ ways of interleaving the two words.
For a more formal definition see Reutenauer's book \cite[\S 1.4]{Reu}.
Here are some examples for the shuffle product:
$$ \begin{matrix}
 e_{12 \,\shuffle \,34}  =  e_{12} \shuffle e_{34}  =  e_{1234} + e_{1324} + e_{1342} + e_{3124} + e_{3142} + e_{3412},  \\
 e_{3 \,\shuffle 134}  =  e_{3} \shuffle e_{134}  = 
 e_{3134} + 2 e_{1334} + e_{1343}\,, \qquad e_{21 \, \shuffle \,21}  = 2 e_{2121} + 4 e_{2211}, \\
 e_{1 \,\shuffle \,111} = 4 e_{1111} \, , \quad
e_{11 \,\shuffle \,11} = 6 e_{1111}\, , \quad 
e_{12 \,\shuffle \,21} = 2 e_{1221} + e_{1212} + e_{2121} + 2 e_{2112}.
 \end{matrix}
 $$

The tensor algebra also carries two {\it coproducts}: the {\it deconcatenation} coproduct $\Delta_\otimes$ and the {\it deshuffle coproduct} $\Delta_\shuffle$, given by dualizing the corresponding products. One so
obtains two bialgebra (in fact, Hopf algebra) structures that are in natural duality, 
see e.g.~\cite[Proposition 1.9]{Reu}. The truncated tensor algebra naturally
inherits most of these structures.

\smallskip

The ring of polynomial functions on $T^n(\RR^d)$ is
denoted by   $\RR[ \sigma^{(\leq n)}]$. The coordinate functions are
$\sigma_I$, where $I$ runs over all words of length $\leq n$.
For two words $I$ and $J$ of length $\geq 1$, let 
$\sigma_{I \,\shuffle \,J}$ be the linear form in $\RR[\sigma^{(\leq n)}]$
that corresponds to the shuffle product $e_{I \,\shuffle \,J}$.
We refer to $\sigma_{I \,\shuffle \,J}$ as a {\em shuffle linear form}. For instance,
here are some examples of shuffle linear forms:
\begin{equation}
\label{eq:shuffleforms}
 \begin{matrix}
 \sigma_{12 \,\shuffle \,34}   \, =\,  \sigma_{1234} + \sigma_{1324} + \sigma_{1342} + \sigma_{3124} + \sigma_{3142} + \sigma_{3412},  \\
 \sigma_{3 \,\shuffle 134}  \,=  \,
 \sigma_{3134} + 2 \sigma_{1334} + \sigma_{1343}\,, \,\, \sigma_{21 \, \shuffle \,21}  \,=\,
  2 \sigma_{2121} + 4 \sigma_{2211}, \,\, \\
 \sigma_{1 \,\shuffle \,111} = 4 \sigma_{1111} \, , \quad
\sigma_{11 \,\shuffle \,11} = 6 \sigma_{1111}\, , \quad \\
\sigma_{12 \,\shuffle \,21} = 2 \sigma_{1221} + \sigma_{1212} + \sigma_{2121} + 2 \sigma_{2112}.
 \end{matrix}
\end{equation}

An element of $T^n(\RR^d)$ is a {\em Lie polynomial} if it
can be obtained from the standard basis vectors $e_1,e_2,\ldots,e_d$
by iterating the operations of taking Lie brackets and linear combinations.
The resulting set of all Lie polynomials of degree $\leq n$ is a vector space,
denoted by ${\rm Lie}^n(\RR^d)$.  Equivalently, ${\rm Lie}^n(\RR^d)$ is the
smallest Lie subalgebra of $T^n(\RR^d)$ containing $\RR^d$. By construction, all
Lie polynomials are elements of  $\,T_0^n(\RR^d) = \{ 0 \} \oplus \RR^d \oplus \cdots \oplus (\RR^d)^{\otimes n}$. 

\begin{lemma} \label{lem:shuffleforms}
Lie polynomials are characterized by the vanishing of all shuffle linear forms:
$$ {\rm Lie}^n(\RR^d) \,\, = \,\,\bigl\{ \,P \in T_0^{n}(\RR^d) \,:\,\,
\sigma_{I \,\shuffle \,J}(P) = 0 \,\,\, \hbox{for all (non-empty) words} \,\,I,J \,\bigr\}. $$
\end{lemma}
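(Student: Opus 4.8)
The plan is to establish the claimed equality between two subspaces of $T_0^n(\RR^d)$ by a double inclusion, using the duality between the concatenation and shuffle Hopf algebra structures described just above. The $\subseteq$ direction amounts to showing every Lie polynomial is annihilated by every shuffle linear form, i.e. that a Lie polynomial is a \emph{primitive} element for the deshuffle coproduct $\Delta_\shuffle$. First I would observe that $\sigma_{I\,\shuffle\,J}(P)$ is exactly the pairing of $P$ with the basis tensor $e_{I\,\shuffle\,J}$, so the vanishing of all shuffle linear forms on $P$ says precisely that $P$, viewed as a linear functional on $T^n(\RR^d)$ via the standard basis pairing, is an algebra homomorphism for $\shuffle$ (equivalently $\langle P, u\shuffle v\rangle = \langle P,u\rangle\langle P,v\rangle$ fails in general but the bilinear version $\langle P, u\shuffle v\rangle=0$ for $|u|,|v|\ge 1$ characterizes primitivity once the constant term is zero). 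The standard fact (Friedrichs' criterion, see Reutenauer \cite[Thm.~1.4 and \S 1.5]{Reu}) is that the primitive elements of the shuffle bialgebra are exactly the Lie series; truncating at degree $n$ gives the statement. So for $\subseteq$ I would cite Friedrichs: it suffices to check that the generators $e_i$ are primitive (immediate, since $\Delta_\shuffle e_i = e_i\otimes 1 + 1\otimes e_i$) and that the bracket of two primitives is primitive, which is a one-line computation with the coproduct; then every Lie polynomial is primitive and hence killed by all $\sigma_{I\,\shuffle\,J}$.

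For the reverse inclusion $\supseteq$, I would argue by a dimension count. Let $V_n$ denote the right-hand side, the common kernel of the shuffle linear forms inside $T_0^n(\RR^d)$. We already know ${\rm Lie}^n(\RR^d)\subseteq V_n$. To get equality it is enough to show $\dim V_n \le \dim {\rm Lie}^n(\RR^d)$. The dimension of ${\rm Lie}^n(\RR^d)$ in each degree $k$ is given by the Witt formula / necklace count $\frac{1}{k}\sum_{e\mid k}\mu(e)d^{k/e}$, coming from the Poincar\'e--Birkhoff--Witt theorem applied to the free Lie algebra. On the other hand, $V_n$ is the orthogonal complement (degree by degree) of the span of all shuffle products $e_I\shuffle e_J$ with $|I|,|J|\ge 1$; by Radford's theorem the shuffle algebra is a polynomial algebra on the Lyndon words, so in degree $k$ the span of proper shuffle products has codimension equal to the number of Lyndon words of length $k$ over a $d$-letter alphabet — which is the same Witt number. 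Hence $\dim V_n = \dim {\rm Lie}^n(\RR^d)$ in every degree, forcing equality.

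An alternative, more self-contained route for $\supseteq$ avoids invoking Radford and instead uses the projection onto the Lie part. Let $\pi_1\colon T^n_0(\RR^d)\to {\rm Lie}^n(\RR^d)$ be the first Eulerian idempotent (the Dynkin-type projector), which is the identity on ${\rm Lie}^n$. The key identity is that for a primitive functional — equivalently for $P\in V_n$ — one has $P = \pi_1(P)$, because the difference $P - \pi_1(P)$ pairs to zero against all of $T^n(\RR^d)$: writing $T^n(\RR^d)$ in the PBW basis of shuffle-monomials in Lyndon words, a primitive element has no component on shuffle-monomials of degree $\ge 2$ in Lyndon words, and $\pi_1$ strips exactly those components off. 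Thus $P\in{\rm Lie}^n(\RR^d)$. I would present whichever of these two the authors prefer; both reduce to standard Hopf-algebraic facts about the free Lie algebra.

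The main obstacle, or rather the only real content, is the reverse inclusion: knowing that the shuffle linear forms do not over-determine the Lie algebra, i.e. that their common zero set is not strictly larger than ${\rm Lie}^n(\RR^d)$. Everything hinges on the precise duality between the two Hopf structures on $T^n(\RR^d)$ and on the PBW/Radford description of the shuffle algebra as a polynomial ring on Lyndon words; with those in hand the proof is short. I would make sure to flag the truncation subtlety — passing from $T(\RR^d)$ to $T^n(\RR^d)$ is harmless because both the Lie subalgebra and the shuffle relations respect the grading, so everything localizes degree by degree — and otherwise quote \cite[\S 1.4--1.6]{Reu} for the underlying results.
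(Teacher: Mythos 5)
Your proof is correct, but the paper's own proof of this lemma is a one-line citation: ``This is the equivalence between (i) and (iv) in [Reu, Theorem~3.1].'' What you have written out is essentially a self-contained re-derivation of that cited theorem of Reutenauer (the Friedrichs/Ree criterion for Lie elements), rather than a different argument. Your $\subseteq$ direction is the elementary one --- generators are primitive for the unshuffle coproduct $\Delta_\shuffle$, brackets of primitives are primitive, and primitivity is exactly the vanishing of $\langle P, u\shuffle v\rangle$ for non-empty $u,v$ together with zero constant term. Your $\supseteq$ direction (the actual content of Friedrichs' theorem) is handled either by the dimension count matching the Witt numbers against the Lyndon/Radford codimension of the decomposables, or by the Eulerian idempotent; both are standard and valid. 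A few phrasings are slightly off but not fatal: ``Friedrichs' criterion'' is really the name for the nontrivial converse direction ($\supseteq$), not the easy inclusion where you invoke it; and the parenthetical about $P$ being ``an algebra homomorphism for $\shuffle$'' confuses the group-like characterization (Lemma~\ref{lem:GroupLike}) with the primitive/Lie one before you correct yourself. Net assessment: mathematically sound and more explanatory than the paper, at the cost of length; the paper buys brevity by delegating the whole statement to Reutenauer's textbook, which is the more economical choice in context since the proof adds nothing specific to signature tensors.
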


\begin{proof} This is the equivalence between (i) and (iv) in \cite[Theorem 3.1]{Reu}.
\end{proof} 
 
 We next recall (e.g.~from \cite[Section 1.1]{Reu}) 
 that the familiar series for the exponential function and the logarithm function
 determine polynomial maps from $\,T^n(\RR^d)\, $ to itself:
 $$ {\rm exp}(P) \,= \, \sum_{r \geq 0} \frac{1}{r!} P^{\otimes r} \quad {\rm and} \quad
 {\rm log}(1 + P) \,=\, \sum_{r \geq 1} \frac{(-1)^{r-1}}{r} P^{\otimes r} .
 $$
 The image of $T_0^n(\RR^d)$ under the exponential function is the set $T_1^n(\RR^d)$ of
   tensor polynomials with constant term $1$.
 The logarithm function inverts the exponential function on its image:
 $$\quad {\rm log}({\rm exp}(P))\, =\, P \qquad \hbox{for all $P \in T_0^n(\RR^d)$}. $$

The  {\em step-$n$ free nilpotent Lie group} is the image of the 
free Lie algebra under the exponential map:
\begin{equation}
\label{eq:Gnd}
\mathcal{G}^n (\RR^d) \,:=\,\, {\rm exp}({\rm Lie}^n(\RR^d))\,\, \subset \,\,T_1^n (\RR^d) .
\end{equation}
It is known (e.g.~\cite[Theorem 7.30]{FV}) that $ \mathcal{G}^n (\RR^d)  $ is a Lie group, 
with polynomial group law. It
plays a central role as state space of geometric rough paths (cf. Section \ref{sec:rough}).
Elements in $\mathcal{G}^n (\RR^d) $ are also known as {\it group-like elements} \cite{HK, Reu}. 

\begin{lemma}  \label{lem:GroupLike}
Group-like elements are characterized by multiplicativity:
$$ \mathcal{G}^n(\RR^d)  = \bigl\{ P \in T_1^{n}(\RR^d) :
\sigma_{I \,\shuffle \,J}(P) = \sigma_{I}(P) \sigma_{J}(P) \,\,\, \hbox{for all} \,\,I,J \, 
\hbox{with} \,\,|I| + |J| \leq n \,\bigr\}. $$
\end{lemma}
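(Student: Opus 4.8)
The plan is to establish the two inclusions separately, using the duality between the shuffle and deconcatenation structures together with Lemma~\ref{lem:shuffleforms}. Write $\mathcal{H}_{d,n}$ for the set on the right-hand side, the multiplicative elements of $T_1^n(\RR^d)$. For the inclusion $\mathcal{G}^n(\RR^d) \subseteq \mathcal{H}_{d,n}$, take $g = \exp(P)$ with $P \in {\rm Lie}^n(\RR^d)$. The key fact is that $P$ being primitive for the deconcatenation coproduct (i.e.\ $\Delta_\otimes P = P \otimes 1 + 1 \otimes P$, which holds for Lie polynomials since primitives form a Lie subalgebra containing $\RR^d$) forces $g = \exp(P)$ to be group-like for $\Delta_\otimes$, meaning $\Delta_\otimes g = g \otimes g$. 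Dualizing, $\Delta_\otimes$ is dual to the shuffle product, so the statement $\Delta_\otimes g = g \otimes g$ is exactly the assertion that $\langle g, u \shuffle v\rangle = \langle g, u\rangle\langle g, v\rangle$ for all words $u, v$ — which is precisely $\sigma_{I\,\shuffle\,J}(g) = \sigma_I(g)\sigma_J(g)$. One caveat in the truncated setting: the identity $\exp$ of a primitive is grouplike must be checked to survive truncation at level $n$, but since the shuffle relations only involve words of length $|I|+|J| \le n$, and both sides of $\Delta_\otimes \exp(P) = \exp(P)\otimes\exp(P)$ are computed the same way whether or not we truncate at level $n$ (the truncation only kills terms we are not testing against), this goes through.

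For the reverse inclusion $\mathcal{H}_{d,n} \subseteq \mathcal{G}^n(\RR^d)$, take $g \in T_1^n(\RR^d)$ satisfying all the multiplicativity relations and set $P := \log(g) \in T_0^n(\RR^d)$; it suffices to show $P \in {\rm Lie}^n(\RR^d)$, and by Lemma~\ref{lem:shuffleforms} this amounts to showing $\sigma_{I\,\shuffle\,J}(P) = 0$ for all non-empty words $I, J$. The strategy is to reverse the computation above: the multiplicativity of $g$ says $g$ is a character for the shuffle algebra, equivalently $\Delta_\otimes g = g \otimes g$; since $g$ is group-like for $\Delta_\otimes$ and invertible (constant term $1$), its logarithm $P = \log(g)$ is primitive for $\Delta_\otimes$, i.e.\ $\Delta_\otimes P = P\otimes 1 + 1\otimes P$; and primitivity of $P$ translates under the shuffle--deconcatenation duality into $\sigma_{I\,\shuffle\,J}(P) = 0$ for all non-empty $I, J$ (here we use that $I \shuffle J$ is a sum of words of length $\ge 2$, so pairing with a primitive element, which is supported in a way orthogonal to all nontrivial products, gives zero). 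This is the content of, or follows directly from, \cite[Theorem 3.1]{Reu} (the primitive elements are the Lie polynomials), so the cleanest write-up simply cites that the group-like elements for $\Delta_\otimes$ are the exponentials of primitive (= Lie) elements.

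The main obstacle, and the step deserving the most care, is the bookkeeping of the \emph{truncation}: all of the clean Hopf-algebra statements (``exp of a primitive is group-like'', ``group-like elements are exponentials of primitives'') live most naturally in the completed tensor algebra, and one must argue that passing to $T^n(\RR^d)$, where words of length $> n$ are set to zero, preserves both the validity of the characterization and the correspondence. The point is that $T_1^n(\RR^d) \to T_1^n(\RR^d)$ via $\log$ and $\exp$ are mutually inverse polynomial bijections (stated in the excerpt), the ideal of words of length $> n$ is a Hopf ideal, and the quotient Hopf algebra $T^n(\RR^d)$ inherits the duality with the shuffle algebra truncated at level $n$ — as the excerpt already notes in the paragraph on coproducts. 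So the honest work is: (1) recall/cite that for $P\in T_0^n$ primitive, $\exp(P)$ is group-like, and conversely; (2) invoke \cite[Theorem 3.1]{Reu}, or equivalently our Lemma~\ref{lem:shuffleforms}, to identify primitives with Lie polynomials; (3) dualize ``group-like'' to ``multiplicative on coordinates'', i.e.\ $\sigma_{I\,\shuffle\,J} = \sigma_I\sigma_J$. Steps (1) and (3) are the pairing/duality computations; (2) is quoted. I would present it as: reduce to showing $\log$ restricts to a bijection between $\mathcal{H}_{d,n} \cap \{\text{constant term }1\}$-type set and ${\rm Lie}^n(\RR^d)$, then read off the characterization from Lemma~\ref{lem:shuffleforms} by comparing the defining relations of the two sides under $\exp$/$\log$.
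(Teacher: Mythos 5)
The paper's own ``proof'' is a one-line citation to Reutenauer's Theorem~3.2(ii) (and to Hairer--Kelly), so you are doing genuinely more work by unfolding the Hopf-algebraic argument behind that citation, and the skeleton of your argument is the right one. However, there is a systematic error: you use the wrong coproduct throughout. You assert that a Lie polynomial $P$ is primitive for the \emph{deconcatenation} coproduct $\Delta_\otimes$, and that $\Delta_\otimes$ is the coproduct adjoint to the shuffle product. Both statements are false, and the second is the root cause of the first. Concretely, for $P = e_{12} - e_{21} = [e_1,e_2]$ one computes
\begin{equation*}
\Delta_\otimes(e_{12}-e_{21}) \;=\; P\otimes 1 + 1\otimes P + \bigl(e_1\otimes e_2 - e_2\otimes e_1\bigr),
\end{equation*}
which is not primitive; indeed the only $\Delta_\otimes$-primitives in $T(V)$ are the degree-one elements $V$ itself. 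The correct coproduct is the \emph{deshuffle} coproduct $\Delta_\shuffle$: Friedrichs' criterion says $P$ is a Lie polynomial iff $\Delta_\shuffle P = P\otimes 1 + 1\otimes P$, and it is $\Delta_\shuffle$ (not $\Delta_\otimes$) that is adjoint to the shuffle product under the word-basis pairing, i.e.\ $\langle a\,\shuffle\, b, c\rangle = \langle a\otimes b, \Delta_\shuffle c\rangle$, whereas $\Delta_\otimes$ is adjoint to \emph{concatenation}. With this swap your computation goes through: $\Delta_\shuffle$ is a morphism of concatenation algebras, so $\Delta_\shuffle(\exp P) = \exp(\Delta_\shuffle P) = \exp P\otimes\exp P$ when $P$ is primitive, and dualizing $\Delta_\shuffle g = g\otimes g$ gives exactly $\sigma_{I\shuffle J}(g)=\sigma_I(g)\sigma_J(g)$. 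With $\Delta_\otimes$ the step $\Delta_\otimes(\exp P) = \exp(\Delta_\otimes P)$ would already fail, since $\Delta_\otimes$ is a morphism for the shuffle product rather than concatenation, which is what $\exp$ is built from.

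So: replace $\Delta_\otimes$ by $\Delta_\shuffle$ in every place you invoke primitivity, group-likeness, or the adjunction with the shuffle product, and your argument is a correct and reasonably self-contained expansion of the citation. Your remarks about truncation are appropriate and correctly handled: the ideal of words of length $>n$ is a Hopf ideal for the concatenation Hopf algebra $(T(V),\cdot,\Delta_\shuffle)$, and the relations tested involve only $|I|+|J|\le n$, so nothing is lost.
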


\begin{proof} This is a reformulation of \cite[Theorem 3.2 (ii)]{Reu}. See also  \cite[(4.2)]{HK}.
\end{proof} 

\begin{remark}[From $\RR$ to $\KK$] \rm The definitions above carry over verbatim
from the real numbers $\RR$ to an arbitrary field $\KK$ of characteristic zero.
Lemmas \ref{lem:shuffleforms} and \ref{lem:GroupLike} remain unchanged.
They furnish equational characterizations of
the Lie algebra ${\rm Lie}^n(\KK^d)$ and the Lie group $\mathcal{G}^n(\KK^d)$.
The former is a linear space and the latter is an affine algebraic variety.
\end{remark}

We now come to the connection with paths. It is established by the following fundamental 
result due to Chen \cite{Chen57, Chen58}.
This can also be viewed as a consequence of Chow's work in~\cite{Chow40}.

\begin{theorem}[Chen-Chow] \label{thm:chen}
The step-$n$ free nilpotent Lie group $ \mathcal{G}^n (\RR^d) $ is precisely the 
image of the step $n$ signature map in (\ref{eq:tensorsupton}) when
that map is applied to all  smooth paths in $\RR^d$:
\begin{equation}
\label{eq:explog}
\mathcal{G}^n (\RR^d)  \,\, = \,\,
\bigl\{ \,\sigma^{\leq n}(X) \,\,: \,\,\, X : [0,1] \rightarrow \RR^d \,\,
\hbox{any smooth path} \,\bigr\} \ .
\end{equation}
\end{theorem}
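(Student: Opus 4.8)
The plan is to establish the two inclusions in (\ref{eq:explog}) separately, using the algebraic characterizations from Lemmas \ref{lem:shuffleforms} and \ref{lem:GroupLike} together with Chen's multiplicative property of signatures. Throughout, write $\sigma^{\leq n}(X) \in T_1^n(\RR^d)$ for the truncated signature of a smooth path, noting that its constant term is $1$ by the convention in (\ref{eq:tensorsupton}).

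First I would prove the inclusion ``$\supseteq$'': every truncated signature is group-like, hence lies in $\mathcal{G}^n(\RR^d)$. The key input is the \emph{shuffle identity} for iterated integrals: for any two multi-indices $I,J$ one has $\sigma_I(X)\,\sigma_J(X) = \sigma_{I\,\shuffle\,J}(X)$, which follows by iterated integration by parts (Fubini on the product of simplices, decomposing the product domain into shuffles). This is classical — it goes back to Chen and Ree — and I would cite \cite[\S 1.5]{Reu} or prove it by induction on $|I|+|J|$. Given this identity, Lemma \ref{lem:GroupLike} immediately yields $\sigma^{\leq n}(X) \in \mathcal{G}^n(\RR^d)$ for every smooth path $X$.

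Next I would prove the inclusion ``$\subseteq$'': every group-like element is realized as a truncated signature. Here the natural route is via the Lie-algebra side. Take $g \in \mathcal{G}^n(\RR^d)$; by definition $g = \exp(L)$ for some Lie polynomial $L \in {\rm Lie}^n(\RR^d)$. Since ${\rm Lie}^n(\RR^d)$ is spanned by iterated brackets of the generators $e_1,\dots,e_d$, and since signatures behave well under concatenation and reversal of paths (Chen's identity $\sigma^{\leq n}(X * Y) = \sigma^{\leq n}(X)\otimes\sigma^{\leq n}(Y)$ and time-reversal giving the inverse), it suffices to realize $\exp(t\,e_i)$ by the linear path $t\mapsto t e_i$ and then realize brackets by the usual commutator-of-loops construction: a bracket $[e_i,e_j]$ at leading order is produced, up to higher-order corrections absorbable by induction on $n$, by the small square loop going along $e_i$, then $e_j$, then $-e_i$, then $-e_j$. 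More cleanly, one argues by downward induction on the truncation level: given the claim at level $n-1$, lift a path realizing the image of $g$ in $T_1^{n-1}$ and correct the top-degree component, which lives in the abelian group $(\RR^d)^{\otimes n}\cap{\rm Lie}^n(\RR^d)$, by concatenating suitable bracket-loops scaled to act only in degree $n$. Alternatively — and this is probably the slickest write-up — one invokes Chow's theorem: the control system $\dot x = \sum u_i(t) e_i$ on the Lie group $\mathcal{G}^n(\RR^d)$ is bracket-generating by construction, so its reachable set from the identity is all of $\mathcal{G}^n(\RR^d)$, and a reachable endpoint is exactly a truncated signature of the corresponding path. This is the sense in which the theorem ``can be viewed as a consequence of Chow's work.''

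The main obstacle is the surjectivity direction ``$\subseteq$'': showing that \emph{every} group-like element, not just those of a special form, arises from a genuine smooth path. The shuffle-identity direction is essentially a bookkeeping exercise, but surjectivity requires either a careful inductive construction tracking how concatenation of bracket-loops affects each graded piece (and checking that lower-degree corrections can be killed without disturbing the target), or a clean appeal to the Chow–Rashevskii theorem on the finite-dimensional nilpotent Lie group $\mathcal{G}^n(\RR^d)$. I would take the latter path: verify that $\mathcal{G}^n(\RR^d)$ is a connected Lie group (cited from \cite[Theorem 7.30]{FV}), that left-translations of the span of $e_1,\dots,e_d$ Lie-generate the tangent Lie algebra ${\rm Lie}^n(\RR^d)$ (immediate from the definition of ${\rm Lie}^n$), conclude the horizontal distribution is bracket-generating, and apply Chow to get controllability; finally identify the endpoint of an admissible curve with $\sigma^{\leq n}$ of that curve via the defining ODE (\ref{eq:tensor}), smoothing the control if necessary. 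Combining both inclusions gives the stated equality.
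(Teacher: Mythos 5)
The paper does not prove Theorem~\ref{thm:chen}; it states it as a classical result, citing Chen \cite{Chen57,Chen58} and Chow \cite{Chow40}, and later invokes the piecewise-linear version \cite[Theorem 7.28]{FV} in the proof of Theorem~\ref{thm:chains}. So there is no in-paper proof to compare against; your proposal reconstructs the standard classical argument, and it does so correctly. The inclusion ``$\supseteq$'' is exactly Chen/Ree: the shuffle identity $\sigma_I(X)\,\sigma_J(X)=\sigma_{I\shuffle J}(X)$ for iterated integrals, combined with the characterization of group-like elements in Lemma~\ref{lem:GroupLike}. The inclusion ``$\subseteq$'' via Chow--Rashevskii is the right reading of ``Chow'' in ``Chen--Chow'': the left-invariant horizontal distribution $\mathrm{span}(e_1,\dots,e_d)$ on the connected Lie group $\mathcal{G}^n(\RR^d)$ is bracket-generating by the very definition of $\mathrm{Lie}^n(\RR^d)$, and the endpoint of a horizontal curve driven by $X$ is the truncated signature, since the signature solves the left-invariant ODE $\dot Y_t = Y_t\otimes \dot X_t$, $Y_0=1$. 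Two places would need an extra sentence in a full write-up. First, Chow--Rashevskii as usually stated produces an absolutely continuous or piecewise-smooth horizontal curve, not a $C^\infty$ one, so ``smoothing the control if necessary'' is doing real work: either observe that the endpoint map is open on smooth controls, so the smooth reachable set is already all of the connected group, or---closer to this paper's style---take the piecewise-linear realization of \cite[Theorem 7.28]{FV} and reparametrize each segment to slow to velocity zero at the corners, exactly as is done in the proof of Theorem~\ref{thm:chains}. Second, in your alternative bracket-loop induction, the top-degree correction space $(\RR^d)^{\otimes n}\cap\mathrm{Lie}^n(\RR^d)$ is not merely abelian but \emph{central} in $\mathcal{G}^n(\RR^d)$; that centrality is precisely what lets you adjust the degree-$n$ component without disturbing lower degrees, and it deserves to be stated explicitly. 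Neither point is a gap in the logic, only in the write-up.
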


We revisit the $n=2$ case in
Section \ref{sec:vsm} from the perspective of this theorem.

\begin{example}[$n=2$] \rm
The truncated tensor algebra $T^2 (\RR^d)$ consists of elements
$$ \begin{matrix} P \, \, = \,\,\, \sum_{i=1}^d \sum_{j=1}^d \alpha_{ij} e_{ij} \, + \,
\sum_{i=1}^d \beta_i e_i \, + \, \gamma. \end{matrix} $$
By Lemma \ref{lem:shuffleforms}, $P$  is in ${\rm Lie}^2(\RR^d)$ if and only if $\gamma = 0$
and $\,\alpha_{i \,\shuffle\, j} = \alpha_{ij} + \alpha_{ji} \,=\, 0\,$ for all $i,j$.
Lemma \ref{lem:GroupLike} says that the exponentials of these Lie polynomials are precisely the expressions
$$ \begin{matrix} {\rm exp}(P)\,\,=\,\,\sum_{i=1}^d \sum_{j=1}^d \sigma_{ij} e_{ij} \,+\,
\sum_{i=1}^d \sigma_i e_i \,+\, 1. \end{matrix}
$$
where  $\,\sigma_i \sigma_j\,$ equals  the shuffle linear form $\sigma_{i \,\shuffle \,j} = \sigma_{ij} + \sigma_{ji} $
for $1 \leq i,j \leq d$. These inhomogeneous quadratic equations cut out
the step-$2$ free nilpotent Lie group $\mathcal{G}^2(\RR^d)$. 
By eliminating $\sigma_1,\ldots,\sigma_d$ from these equations, we obtain the
homogeneous ideal $M_{d,d}$ in Theorem \ref{thm:matricesmain}. 
\hfill $\diamondsuit$
\end{example}

\subsection{Gr\"obner Basis for the Free Lie Group}

In what follows we work over an algebraically closed field $\KK$ of characteristic zero.
Our varieties are defined over $\QQ$.
Computations refer to polynomials with
rational coefficients.
We consider the following ideal in the ring $\KK[ \sigma^{(\leq n)}]$
of polynomial functions on $T^n(\KK^d)$:
\begin{equation}
\label{eq:shuffleideal}
G_{d,n} \,\, = \,\,
 \big\langle \,\sigma_I \sigma_J - \sigma_{I \, \shuffle \,J} \,\,:\,\,
\hbox{for all words} \,\,I\,\,{\rm and} \,\,J \,\, 
\hbox{with} \,\,|I| + |J| \leq n
\,\, \big\rangle.
\end{equation}
This ideal is not homogeneous. Its affine variety $\mathcal{G}_{d,n}$ is precisely the 
free Lie group $\mathcal{G}^n(\KK^d)$. The exponential map is a polynomial parametrization
of this variety. The following example illustrates the
associated implicitization problem, whose solution is given  by Lemma \ref{lem:GroupLike}.

\begin{example}[$d=2,n=3$]  \label{ex:P23M2}  \rm
The linear space ${\rm Lie}^3(\KK^2)$  has dimension $5$.  Its elements are 
$$  \quad
\sigma \,= \,
r e_1 + s e_2 + t [e_1,e_2] + u [e_1,[e_1,e_2]] +  v [[e_1,e_2],e_2] ,
\quad \hbox{where} \,\,r,s,t,v,u \in \KK. 
$$
${\rm Lie}^3(\KK^2)$ is the subspace of
$T_0^3(\KK^2)  \simeq \KK^{14}$ defined by nine
shuffle relations like
$$ \sigma_{1 \shuffle 2} = \sigma_{12} + \sigma_{21} \,=\, 0\, ,\,\,\,
\sigma_{1 \shuffle 11} = 3 \sigma_{111}\, =\, 0 \,,\,\, \,
\sigma_{1 \shuffle 12} = 
2 \sigma_{112}+ \sigma_{121}\, =\, 0.
$$
The exponential $ {\rm exp}(\sigma) $
of the Lie polynomial $\sigma$ is the following expression:
$$ \begin{matrix}
  1+  r e_1 + s e_2
+  \frac{r^2}{2} e_{11}
+ \bigl(\frac{rs}{2}+t\bigr) e_{12}
+ \bigl(\frac{rs}{2}-t \bigr) e_{21} + \cdots  \\
+ \bigl(\frac{rs^2}{6} -   2v \bigr)     e_{212}
+ \bigl(\frac{rs^2}{6}- \frac{st}{2}+v \bigr) e_{221}
+  \frac{s^3}{6} e_{222}. 
\end{matrix}
$$
The coefficients of ${\rm exp}(\sigma)$ define the exponential map
from ${\rm Lie}^3(\KK^2) \simeq \KK^5$ into $T_1^3 (\KK^2) \simeq \KK^{14}$.
Its image is the $5$-dimensional variety $\mathcal{G}_{2,3}$. We compute 
its ideal $G_{2,3}$ using the computer algebra package {\tt Macaulay2} as follows:
\begin{small}
\begin{verbatim}
R = QQ[s11,s21,s22,s111,s121,s211,s212,s221,s222,s1,s2,s12,s112,s122,
        MonomialOrder=>Lex];
ExponentialMap = map( QQ[r,s,t,u,v] , R , 
{r^2/2,r*s/2-t,s^2/2,r^3/6,r^2*s/6-2*u,r^2*s/6-r*t/2+u,r*s^2/6-2*v,
r*s^2/6-s*t/2+v, s^3/6, r,s,r*s/2+t, r^2*s/6+r*t/2+u, r*s^2/6+s*t/2+v});
G23 = kernel ExponentialMap;  
gens gb G23
G23 == ideal( s1^2-2*s11, s1*s2-s12-s21, s2^2-2*s22, s1*s11-3*s111, 
s1*s12-2*s112-s121, s1*s21-s121-2*s211, s1*s22-s122-s212-s221, 
s2*s11-s121-s211-s112,  s2*s12-2*s122-s212, s2*s21-2*s221-s212,
s2*s22-3*s222)
\end{verbatim}
\end{small}
This computes the lexicographic Gr\"obner basis with nine elements 
 to be seen in Theorem~\ref{thm:lyndonmain}.
The last command uses {\tt ==} to verify that $G_{2,3}$ is generated by the 
quadratic relations
 $\sigma_I \sigma_J - \sigma_{I \shuffle J}$.
\hfill $ \diamondsuit $
\end{example}

We are interested in the structure of  the free Lie group $ \mathcal{G}_{d,n}$
as an affine algebraic variety. To this end, we need to first record some 
combinatorial facts about free Lie algebras.

A word $I$ on the alphabet $\{1,2,\ldots,d\}$ is a {\em Lyndon word}  if it
is strictly smaller in lexicographic order than all of its rotations.
Lyndon words are the Hall words for a particular Hall set 
(cf.~\cite[Chapter 4]{Reu}). Since they are easy to define,
and seen widely in the combinatorics literature,
we will use Lyndon words in what follows.  With any Lyndon word
$I$ one associates an iterated Lie bracketing $b(I)$ by induction on
$k = {\rm length}(I)$.  If $k = 1$ and $I = i$ then $b(I) = e_i$.
If $k \geq 2$ then it is $b(I) = [ b(I_1), b(I_2)]$ where
$I = I_1 I_2$ and  $I_2$ is the longest Lyndon word
appearing as a proper right factor of $I$. Some authors refer to the bracketing image $b(I)$ of a Lyndon word $I$ as {\it Lyndon bracket}. We point to  \cite{LRam}.
That article also features connections to Gr\"obner bases.
We use the following about Lyndon words.

\begin{proposition} \label{prop:lyndon}
The bracketings $b(I)$ of Lyndon words $I$ of length $\leq n$  form a basis
for ${\rm Lie}^n(\KK^d)$. 
The dimension of this Lie algebra, which is the number of Lyndon words, equals
$$ \qquad \qquad \quad
\lambda_{d,n} \,\, = \,\,\,  \sum_{k=1}^n  \,\sum_{\ell \, {\rm divides} \,k} \! \frac{\mu(\ell)}{k} d^{k/\ell}, \qquad
\hbox{where $\mu$ is the M\"obius function.}
$$
\end{proposition}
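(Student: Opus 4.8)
The plan is to establish Proposition~\ref{prop:lyndon} in two largely independent parts: first that the Lyndon brackets form a basis of ${\rm Lie}^n(\KK^d)$, and second that their number is given by the closed formula $\lambda_{d,n}$. For the first part, the key observation is that Lyndon words are a concrete instance of a \emph{Hall set}, and the classical theorem of Hall (see \cite[Chapter~4]{Reu}) states that for any Hall set $\mathcal{H}$, the iterated brackets $b(h)$ for $h \in \mathcal{H}$ form a basis of the free Lie algebra. One must verify that the Lyndon order and the recursive factorization $I = I_1 I_2$ (with $I_2$ the longest proper right Lyndon factor) genuinely satisfy the Hall set axioms; this is precisely the content of \cite[Theorem~4.9]{Reu}, so I would simply cite it. Truncating at degree $n$ then restricts attention to Lyndon words of length $\le n$. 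Thus the first assertion reduces to a citation, with only the bookkeeping that ${\rm Lie}^n(\KK^d) = \bigoplus_{k=1}^n {\rm Lie}_k(\KK^d)$ decomposes by homogeneous degree and $b(I)$ is homogeneous of degree ${\rm length}(I)$.

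For the counting part, I would argue degree by degree. Write $\lambda_{d,n} = \sum_{k=1}^n N_{d,k}$ where $N_{d,k}$ is the number of Lyndon words of length exactly $k$ on a $d$-letter alphabet. The standard fact (Witt's formula) is $N_{d,k} = \frac{1}{k}\sum_{\ell \mid k}\mu(\ell)\, d^{k/\ell}$, and one of many routes to it goes through the observation that every word of length $k$ factors uniquely as a Lyndon word repeated some number of times followed by cyclic structure — more precisely, each primitive necklace of length $k$ corresponds to $k$ aperiodic words (its rotations), exactly one of which is Lyndon. Counting aperiodic words by M\"obius inversion on the total count $d^k = \sum_{\ell \mid k} (\text{number of words of period exactly }\ell)$ yields the formula. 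Alternatively, and perhaps cleanest for a paper already steeped in the free Lie algebra, one can invoke the Poincar\'e--Birkhoff--Witt theorem: the graded dimension of the free associative algebra equals $\prod_{k\ge 1}(1-t^k)^{-\dim {\rm Lie}_k}$, i.e. $\frac{1}{1-dt} = \prod_{k\ge 1}(1-t^k)^{-\dim {\rm Lie}_k}$, and taking logarithms plus M\"obius inversion gives $\dim {\rm Lie}_k = \frac{1}{k}\sum_{\ell\mid k}\mu(\ell)d^{k/\ell}$. Since the first part already identifies $\dim {\rm Lie}_k(\KK^d) = N_{d,k}$, either computation closes the argument. I would present the PBW route since it ties $N_{d,k}$ directly to the dimension, making the two halves of the proposition mesh.

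The main obstacle — really the only nontrivial point — is justifying that \emph{Lyndon} words form a Hall set, or equivalently that the Lyndon brackets are linearly independent and spanning; everything else is either a citation to \cite{Reu} or an elementary M\"obius inversion. In a paper of this style I expect the honest move is to cite \cite[Theorem~4.9 and Corollary~4.14]{Reu} for the basis statement and \cite[Corollary~4.14]{Reu} (Witt's formula) for the dimension count, rather than reproving classical material. So the realistic proof is essentially: ``This is \cite[Chapter~4]{Reu}; we record it here because Lyndon bracketings are used throughout,'' followed by the one-line M\"obius-function formula for $\lambda_{d,n}$ as a sum of Witt numbers. If a self-contained argument is wanted, the PBW/generating-function derivation of Witt's formula is short enough to include, and the Hall-set verification for Lyndon words — checking that $I < I'$ for right factors in the recursive split and that the split is well-defined — is a routine induction on word length that I would sketch rather than belabor.
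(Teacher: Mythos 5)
Your proposal is correct and matches the paper's approach: the paper's proof is a one-line citation of \cite[Corollary 4.14]{Reu} for the count of Lyndon words, with the basis statement already framed via the Hall-set discussion in \cite[Chapter 4]{Reu} that precedes the proposition. Your extra sketches (M\"obius inversion, PBW generating function) are standard derivations of Witt's formula and consistent with, though more detailed than, what the paper records.
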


\begin{proof} The inner sum is the number of Lyndon words of length  $k$. See
 \cite[Corollary 4.14]{Reu}. \end{proof}
 
 \begin{example} \rm There are   $\lambda_{2,2} = 3$  Lyndon words of length 
 $n \leq 2$ and $d=2$. Their bracketings are
   $e_1 = b(1), e_2 = b(2) , e_{12}-e_{21} = b(12)$. These three
   elements form a  basis of ${\rm Lie}^2(\KK^2)$.
   \hfill $\diamondsuit$
 \end{example}

\begin{example}[$d=2, k=4$] \rm
The three Lyndon words have the bracketings
\begin{equation}
\label{eq:lyndonwords}
  \begin{matrix}
 b(1112) & = & [1,[1,[1,2]]] & = &  e_{1112} - 3 e_{1121} + 3e_{1211} - e_{2111}, \\
 b(1122) & = & [1,[[1,2],2]] &  = & e_{1122} - 2 e_{1212} + 2 e_{2121} - e_{2211}, \\
 b(1222) & = & [[[1,2],2],2] & = & e_{1222} - 3 e_{2122} + 3 e_{2212} - e_{2221} .
 \end{matrix}
 \end{equation}
The shuffle forms $  \, \sigma_{1 \,\shuffle \,111} , \,
\sigma_{11 \,\shuffle \,11} \,$ and $ \,\sigma_{12 \,\shuffle \,21}\,$
in (\ref{eq:shuffleforms}) vanish on the span of (\ref{eq:lyndonwords}), as seen in Lemma \ref{lem:shuffleforms}.
Hence the subspace of shuffle forms in $\KK[\sigma^{(4)}]$ has dimension $13  = 16-3$.
\hfill $\diamondsuit $
\end{example}

We now present our main result in this subsection. It shows that the ideal $G_{d,n}$ is prime.
We fix a {\em lexicographic term order} in the polynomial ring $\KK[\sigma^{(\leq n)}]$.  
The underlying  variable ordering is assumed to satisfy the following requirement:
we have $\sigma_I  > \sigma_J$ if ${\rm length}(I) > {\rm length}(J)$, or if 
$J$ is obtained from $I$ by rotation and $J$ is lexicographically smaller than~$I$.
Consider any word $I$ that is not a Lyndon word. In the following theorem,
the polynomial $\phi_I$ is the unique expression of $\sigma_I$ 
on $ \mathcal{G}_{d,n} $ in terms of
the unknowns $\sigma_J$ that are indexed by the Lyndon words~$J$.

\begin{theorem} \label{thm:lyndonmain}
The ideal $\,G_{d,n}$ in (\ref{eq:shuffleideal}) is prime. Its
irreducible variety  $\,\mathcal{G}_{d,n}$ has dimension $\lambda_{d,n}$ in $T^n(\KK^d)$. 
The reduced Gr\"obner basis of $G_{d,n}$ consists of 
the polynomials $\,\sigma_{I} - \phi_I(\sigma_{\rm lyndon}) \,$
where $I$ runs over non-Lyndon words  of length $\leq n$.
\end{theorem}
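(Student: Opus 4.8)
The plan is to exploit the exponential parametrization directly. First I would observe that the map $\exp\colon {\rm Lie}^n(\KK^d)\to T_1^n(\KK^d)$ is polynomial and injective (its inverse on the image is the polynomial map $\log$), so its image $\mathcal{G}_{d,n}$ is the closure of the image of an irreducible variety, hence irreducible; this already forces the defining ideal to be prime once we know $\mathcal{G}_{d,n}$ equals $\mathcal{V}(G_{d,n})$, which is exactly the content of Lemma \ref{lem:GroupLike}. For the dimension, since $\exp$ is injective with polynomial inverse, it is a bijective polynomial morphism onto $\mathcal{G}_{d,n}$ with polynomial inverse, hence an isomorphism of varieties; therefore $\dim \mathcal{G}_{d,n}=\dim {\rm Lie}^n(\KK^d)=\lambda_{d,n}$ by Proposition \ref{prop:lyndon}. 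So the first two assertions are essentially immediate once the set-theoretic description from Lemma \ref{lem:GroupLike} is in hand; the real work is the Gröbner basis statement.

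For the Gröbner basis, the key idea is that on $\mathcal{G}_{d,n}$ every coordinate $\sigma_I$ is a polynomial $\phi_I(\sigma_{\rm lyndon})$ in the Lyndon coordinates: indeed $\{b(J) : J \text{ Lyndon}, |J|\le n\}$ is a basis of ${\rm Lie}^n(\KK^d)$ by Proposition \ref{prop:lyndon}, the logarithm coordinates of a group-like element are the coordinates of a Lie polynomial in that basis, and conversely $\sigma_I = e_I^*(\exp(\text{that Lie polynomial}))$ is a polynomial in those basis coefficients. Thus the polynomials $g_I := \sigma_I - \phi_I(\sigma_{\rm lyndon})$, as $I$ runs over non-Lyndon words of length $\le n$, all lie in $G_{d,n}$. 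I would then check that their leading terms under the prescribed lexicographic order are exactly the variables $\sigma_I$ for $I$ non-Lyndon: this uses the variable ordering hypothesis ($\sigma_I>\sigma_J$ when $I$ is longer, or when $J$ is a lex-smaller rotation of $I$) together with the standard fact that $b(I)$ for $I$ Lyndon has leading word $I$, so $\phi_I$ only involves Lyndon coordinates $\sigma_J$ with $J$ of length $\le |I|$ and $J$ lex-preceding or rotation-equivalent in the relevant sense — hence $\sigma_I \succ$ every term of $\phi_I$. Having established that, the leading terms $\{\sigma_I : I \text{ non-Lyndon}\}$ generate an initial ideal whose quotient monomials are exactly the monomials in the $\lambda_{d,n}$ Lyndon variables; so the quotient ring $\KK[\sigma^{(\le n)}]/\langle g_I\rangle$ has Krull dimension $\le \lambda_{d,n}$, with equality forced by the dimension count above, and in fact the quotient is visibly the polynomial ring in the Lyndon variables, hence a domain. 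Therefore $\langle g_I \rangle$ is prime of the right dimension, it is contained in $G_{d,n}$, and since $G_{d,n}$ is the vanishing ideal of the irreducible variety $\mathcal{G}_{d,n}$ of that same dimension, the two ideals coincide; this simultaneously proves primality of $G_{d,n}$ and that the $g_I$ form a Gröbner basis, which is reduced because each leading term $\sigma_I$ appears in no other $g_{I'}$ and $\phi_I$ is already reduced (written purely in Lyndon variables).

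The main obstacle I anticipate is the bookkeeping in the leading-term computation: one must verify carefully that for a non-Lyndon word $I$, the "Lyndon normal form" $\phi_I$ obtained by rewriting $\sigma_I$ on $\mathcal{G}_{d,n}$ via the shuffle/group-like relations contains only monomials strictly smaller than $\sigma_I$ in the chosen lex order, which requires knowing the triangularity of the Lyndon bracket basis (leading word of $b(J)$ is $J$) and that the rewriting never introduces longer words. One clean route is to argue structurally rather than by explicit rewriting: the coordinate ring of $\mathcal{G}_{d,n}$ is, via the $\log$ isomorphism, the polynomial ring on the basis dual to $\{b(J)\}$, and the Lyndon coordinates $\sigma_J$ on $\mathcal{G}_{d,n}$ differ from those dual coordinates by a unipotent (upper-triangular, unit-diagonal) change of variables compatible with the term order — making the $\sigma_J$, $J$ Lyndon, an algebraically independent generating set and forcing every other $\sigma_I$ to reduce to a polynomial in them with strictly smaller leading term. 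I would present that structural argument first and relegate the explicit examples (as in Example \ref{ex:P23M2}) to a sanity check.
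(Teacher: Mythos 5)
Your plan agrees with the paper's proof in its three load-bearing steps: irreducibility and dimension of $\mathcal{G}_{d,n}$ from the polynomial inverse pair $\exp/\log$; the claim that each non-Lyndon $\sigma_I$ reduces on $\mathcal{G}_{d,n}$ to a polynomial $\phi_I$ in the Lyndon variables with leading term $\sigma_I$; and the sandwich $\langle \sigma_I - \phi_I\rangle \subseteq G_{d,n} \subseteq I(\mathcal{G}_{d,n})$, where both outer ideals are prime of dimension $\lambda_{d,n}$, forcing all three to coincide. (One wording caution: you say ``$G_{d,n}$ is the vanishing ideal,'' which would presuppose what you are proving; the correct outer term of the chain is the a priori larger prime $I(\mathcal{G}_{d,n})$, and equality with $G_{d,n}$ is a conclusion.) Where you diverge is in producing the $\phi_I$ and verifying their triangularity, which you correctly flag as the crux. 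The paper does this explicitly via the Melan\c con--Reutenauer rewriting: factor the non-Lyndon word $I$ uniquely into Lyndon words $I_1\cdots I_\kappa$, observe that $\sigma_{I_1 \shuffle \cdots \shuffle I_\kappa} - \sigma_{I_1}\cdots\sigma_{I_\kappa}$ lies in $G_{d,n}$, note that the shuffle sum's highest term is a positive multiple of $\sigma_I$ with every other same-length term lex-earlier, and iterate on length and then lex-position. Your preferred ``structural'' route via the log chart amounts to invoking Radford's theorem (which the paper also cites), but the key assertion -- that passing from the dual Lyndon-bracket coordinates to the Lyndon $\sigma_J$'s is a term-order-compatible triangular polynomial automorphism, not merely a polynomial bijection -- is precisely the obstacle you name and then leave unresolved; establishing it requires the same Lyndon combinatorics that the Melan\c con--Reutenauer iteration makes explicit. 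So the two routes are morally equivalent, but a complete write-up still needs that rewriting (or an equivalent triangularity lemma for the Lyndon bracket basis), and that is exactly what the paper supplies and your sketch defers.
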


Example  \ref{ex:P23M2} illustrates Theorem \ref{thm:lyndonmain} for $d=2, n=3$.
The vector of Lyndon variables equals
$\sigma_{\rm lyndon} = (\sigma_1,\sigma_2,\sigma_{12}, \sigma_{112}, \sigma_{122})$.
Its length is  $\lambda_{2,3} = 5$. The last line of the {\tt Macaulay2} code is
the right hand side of (\ref{eq:shuffleideal}). The output of the command {\tt gens gb G23}
is the reduced Gr\"obner basis in Theorem \ref{thm:lyndonmain}.

\begin{proof}
By Lemma \ref{lem:GroupLike}, the Lie group $\mathcal{G}_{d,n}$
is the zero set of the polynomials $\sigma_I \sigma_J - \sigma_{I \, \shuffle \,J}$. 
This affine variety   is irreducible because it is
the image of  the  linear space ${\rm Lie}^n (\KK^d)$ 
under the polynomial map ${\rm exp}$. This map has
a polynomial inverse, namely ${\rm log}$. Hence the dimension
of $\mathcal{G}_{d,n}$ agrees with that of ${\rm Lie}^n(\KK^d)$.
The latter dimension is $\lambda_{d,n}$, by Proposition \ref{prop:lyndon}.

 Let $\sigma_{\rm lyndon}$ denote the vector of all variables 
 $\sigma_J$ that are indexed by Lyndon words $J$.
 We claim that, for every non-Lyndon word $I$, there is a polynomial
$\phi_I = \phi_I( \sigma_{\rm lyndon})$ in the Lyndon variables $\sigma_J$ such that 
the difference $\,\sigma_I - \phi_I( \sigma_{\rm lyndon}) \,$ lies in the ideal $G_{d,n}$.
This follows from  a theorem of Radford \cite{Ra}. We shall prove  it by induction on
lexicographic~order.

We construct the polynomial $\phi_I$ with the technique used by Melan\c con and Reutenauer
in \cite[\S 4]{MR}. The non-Lyndon word $I$ has a unique factorization 
into Lyndon words $I_1 I_2 \cdots I_\kappa$. 
Recall that shuffle multiplication is associative and commutative.
This ensures that the expression  $\sigma_{I_1 \shuffle I_2 \shuffle \cdots \shuffle I_\kappa} -
\sigma_{I_1} \sigma_{I_2} \cdots \sigma_{I_\kappa} $ lies in the ideal $G_{d,n}$.
Its highest term equals $\sigma_I$ times a positive integer.
We divide the expression by that integer and then subtract it from $\sigma_I$.
Each of the non-Lyndon variables $\sigma_{I'}$ seen in the resulting polynomial
has either $|I'| < |I|$ or $| I' | = |I|$ and $I'$ comes before $I$.
Iterating this process, we are done by induction.

The prime ideal of $\mathcal{G}_{d,n}$ contains $G_{d,n}$
which in turn contains the ideal generated by the expressions $\,\sigma_{I} - \phi_I(\sigma_{\rm lyndon}) \,$
for $I$ non-Lyndon. The first ideal is prime by definition. The third ideal is prime because its
initial monomial ideal is prime. This holds as
it is generated by the  variables $\sigma_I$ for $I$ non-Lyndon.
Moreover, the two prime ideals have the same dimension, namely $\lambda_{d,n}$.
This implies that all three ideals in our chain of inclusions are equal.

The argument also proves Radford's result that the Lyndon variables $\sigma_J$ are
algebraically independent modulo $G_{d,n}$, so the polynomials
$\phi_I$ seen in the Gr\"obner basis are unique.
\end{proof}

\subsection{The Universal Variety}

In this paper we study signature tensors of a fixed order $k$.
The elements in the Lie group $\mathcal{G}_{d,n}$ record these tensors
simultaneously for all values of $k$ between $1$ and $n$. We must
extract the homogeneous component of degree $k$, by projecting
 to the $k$th summand in (\ref{eq:Tn}). The algebraic
counterpart to the geometric operation of projection is the elimination of variables.

The {\em universal ideal} for order $k$ tensors of format $d \times d \times 
\cdots \times d $ is defined as 
\begin{equation}
\label{eq:Idk}
U_{d,k} \,\, := \,\, G_{d,k} \,\cap \, \KK[ \sigma^{(k)}]. 
\end{equation}
This is a homogeneous prime ideal in a polynomial ring in $d^k$ variables.
We define the {\em universal variety} $\,\mathcal{U}_{d,k}$ to be the zero set of
the ideal $U_{d,k}$ in the projective space $\PP^{d^k-1}$.
As in earlier sections, the field $\KK$ is algebraically closed of characteristic zero.
In applications to paths in $\RR^d$, we would take $\KK = \CC$, but we could
also consider paths in $\KK^d$.

\begin{corollary} If $\,\RR \subset \KK$ then
the universal variety $\,\mathcal{U}_{d,k}$ is the smallest projective variety in $\,\PP^{d^k-1}\,$
that contains the $k$th signature tensors $\,\sigma^{(k)}(X)$ of all smooth paths $X$ in $\,\RR^d$.
\end{corollary}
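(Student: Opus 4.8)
The plan is to deduce this corollary directly from the Chen--Chow Theorem~\ref{thm:chen} together with the definition of $U_{d,k}$ as an elimination ideal. First I would invoke Theorem~\ref{thm:chen}: the set of step-$n$ signatures $\sigma^{\leq n}(X)$, as $X$ ranges over smooth paths in $\RR^d$, is exactly the real free nilpotent Lie group $\mathcal{G}^n(\RR^d)$, and in particular for $n=k$ the set of $k$th signature tensors $\sigma^{(k)}(X)$ equals the image of $\mathcal{G}^k(\RR^d)$ under projection $\pi_k$ onto the top summand $(\RR^d)^{\otimes k}$ of $T^k(\RR^d)$. By Theorem~\ref{thm:lyndonmain}, the complex variety $\mathcal{G}_{d,k}=\mathcal{V}(G_{d,k})$ is irreducible and defined over $\QQ$; moreover $\mathcal{G}^k(\RR^d)$ is Zariski-dense in $\mathcal{G}_{d,k}$, since $\mathcal{G}^k(\RR^d)=\exp(\mathrm{Lie}^k(\RR^d))$ is the image of a real vector space of dimension $\lambda_{d,k}$ under the polynomial map $\exp$, hence is Zariski-dense in the $\lambda_{d,k}$-dimensional irreducible variety $\mathcal{G}_{d,k}$ (a real-algebraic map with full-dimensional image into an irreducible variety of the same dimension has Zariski-dense image).

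Next I would pass through the projection. Algebraically, the elimination ideal $U_{d,k}=G_{d,k}\cap\KK[\sigma^{(k)}]$ is by standard elimination theory (cf.~\cite[\S 3.3]{CLO}) exactly the vanishing ideal of the Zariski closure $\overline{\pi_k(\mathcal{G}_{d,k})}$ in $\PP^{d^k-1}$; since $G_{d,k}$ is prime (Theorem~\ref{thm:lyndonmain}), so is $U_{d,k}$, and $\mathcal{U}_{d,k}=\overline{\pi_k(\mathcal{G}_{d,k})}$ is irreducible. Combining this with the density statement, $\pi_k(\mathcal{G}^k(\RR^d))$ is Zariski-dense in $\mathcal{U}_{d,k}$, because the image of a dense set under a morphism is dense in the image's closure. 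Therefore $\mathcal{U}_{d,k}$ is the Zariski closure of the set of $k$th signature tensors of smooth paths in $\RR^d$, which is precisely the assertion that it is the smallest projective variety containing all these tensors.

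One subtlety I want to be careful about: the signature tensors $\sigma^{(k)}(X)$ are \emph{affine} points in $(\RR^d)^{\otimes k}\cong\RR^{d^k}$, whereas $\mathcal{U}_{d,k}$ lives in $\PP^{d^k-1}$; so the claim should be read as: $[\,\mathcal{U}_{d,k}^{\,\mathrm{im}}\,]$, the set of lines spanned by these tensors, has Zariski closure $\mathcal{U}_{d,k}$. This is automatic from the homogeneity of $U_{d,k}$ together with the affine density argument above: passing to the projectivization commutes with Zariski closure for cones, so density of $\pi_k(\mathcal{G}^k(\RR^d))$ in the affine cone over $\mathcal{U}_{d,k}$ gives density of its projectivization in $\mathcal{U}_{d,k}$. (One should also note that the set of signature tensors spans the whole affine cone over $\mathcal{U}_{d,k}$ up to scaling because $\mathcal{G}^k(\RR^d)$ already contains enough scalings via time-reparametrization/dilation of paths, or simply because the affine cone is irreducible of the right dimension.)

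The main obstacle is the density claim: one must ensure that $\mathcal{G}^k(\RR^d)$ (a real, semi-algebraic set) is Zariski-dense in the complex variety $\mathcal{G}_{d,k}$, rather than merely contained in it. The cleanest route is the observation already made inside the proof of Theorem~\ref{thm:lyndonmain}: $\mathcal{G}_{d,k}$ is the image of the affine space $\mathrm{Lie}^k$ under the polynomial map $\exp$ with polynomial inverse $\log$, so $\exp$ is an isomorphism of varieties $\mathrm{Lie}^k(\KK^d)\xrightarrow{\sim}\mathcal{G}_{d,k}$; restricting scalars, $\exp$ maps the real points $\mathrm{Lie}^k(\RR^d)=\RR^{\lambda_{d,k}}$ onto $\mathcal{G}^k(\RR^d)$, and $\RR^{\lambda_{d,k}}$ is Zariski-dense in $\CC^{\lambda_{d,k}}=\mathrm{Lie}^k(\CC^d)$, hence its image is dense in $\mathcal{G}_{d,k}$. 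This reduces everything to the trivial fact that $\RR^N$ is Zariski-dense in $\CC^N$, and then the corollary follows formally.
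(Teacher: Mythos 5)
Your proposal is correct and takes the same route as the paper: the paper's proof is the one-line statement that the corollary follows immediately from the Chen--Chow Theorem, and your argument spells out exactly what ``immediately'' hides---namely the Zariski density of $\mathcal{G}^k(\RR^d)$ in $\mathcal{G}_{d,k}$ (via $\exp$ being a polynomial isomorphism and $\RR^N$ being dense in $\CC^N$), the fact that the elimination ideal $U_{d,k}$ cuts out the closure of the projection, and the scaling-invariance needed to pass to projective space.
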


\begin{proof}
This follows immediately from the Chen-Chow Theorem~\ref{thm:chen}.
\end{proof}

The universal variety $\mathcal{U}_{d,k}$ is the ambient space
for the varieties of signature tensors in Section \ref{sec:pwlp} below.
We saw this already  in the matrix case in Section~\ref{sec:vsm}.

\begin{example}[$k=2$]       \rm
The universal variety $\mathcal{U}_{d,2}$ is the
variety $\mathcal{M}_{d,d}$ in  (\ref{eq:Mchain}).
Its points are the $d \times d$ matrices whose symmetrization has rank $1$.
It is universal in the sense that, for all $m \in \mathbb{N}$,
it contains the signature images  $\mathcal{L}^{\rm im}_{d,2,m}$ and
$\mathcal{P}^{\rm im}_{d,2,m}$ and their varieties  $\mathcal{M}_{d,m}$.
\hfill $\diamondsuit$
\end{example}

The varieties $\mathcal{U}_{d,k} $ and its subvarieties in the
next section are invariant under the action of $G = {\rm GL}(d,\KK)$
on tensors. In describing their ideals, it is sometimes convenient to perform a change of
basis that realizes the decomposition of
$(\KK^d)^{\otimes k}$ into irreducible $G$-modules.

\begin{example}[$d=2,k=3$]  \label{ex:twothree} \rm
The universal variety $\mathcal{U}_{2,3}$  lives in the space $\PP^7$
of $2 \times 2 \times 2$ tensors. We compute its ideal $U_{2,3}$
with the following two commands after Example \ref{ex:P23M2}.
The output consists of six quadrics:
\begin{verbatim}
U23 = eliminate({s1,s2,s11,s12,s21,s22},G23)
codim U23, degree U23, betti mingens U23
\end{verbatim}
To understand the output, we perform the change of coordinates
\begin{equation}
\label{eq:CoB}
\begin{small}
 \begin{matrix}
\sigma_{111} = 6 \alpha_1 & \quad
\sigma_{112} = 2 \alpha_2-\beta_1 & \quad
\sigma_{121} = 2 \alpha_2-\gamma_1 & \quad
\sigma_{211} = 2 \alpha_2+\beta_1+\gamma_1 \\
\sigma_{222} = 6 \alpha_4 & \quad
\sigma_{221} = 2 \alpha_3-\beta_2 & \quad
\sigma_{212} = 2 \alpha_3-\gamma_2 & \quad
\sigma_{122} = 2 \alpha_3+\beta_2+\gamma_2 
\end{matrix}
\end{small}
\end{equation}
Here $\alpha_i,\beta_j, \gamma_k$ are coordinates on  three irreducible $G$-modules 
$S_{3}(\KK^2)$, $S_{21}(\KK^2)$, $S_{21}(\KK^2)$ in $(\KK^2)^{\otimes 3} $.
These have dimensions $4,2,2$, by the Hook Length Formula \cite[\S 2.8]{Lan}.
The new coordinates reveal that the ideal $U_{2,3}$ is 
generated by the $2 \times 2$ minors of the $2 \times 4$ matrix
\begin{equation}
\label{eq:24matrix}
\begin{pmatrix}
    \,  3 \alpha_1 & \alpha_2 &  \alpha_3 &\,  \,\,2 \beta_1 + \gamma_1  \\
\,      \alpha_2 & \alpha_3 & 3 \alpha_4 & -2 \beta_2 - \gamma_2  \,
\end{pmatrix}.
\end{equation}
The universal variety $\mathcal{U}_{2,3}$ has dimension $4$ and degree $4$ in $\PP^7$.
\hfill $\diamondsuit $
\end{example}

\begin{example}[$d=k=3$]  \label{ex:1424} \rm
The universal variety $\mathcal{U}_{3,3}$ of $3 {\times} 3 {\times} 3$
signature tensors has dimension $13$ and degree $24$ in $\PP^{26}$.
Using {\tt Macaulay2}, we find that
its homogeneous prime ideal $U_{3,3}$ is minimally generated by $81$ quadrics.
To exhibit these quadrics, we first decompose the ambient tensor space into four irreducible
$G$-modules:
$$  ( \KK^3)^{\otimes 3} \,  \simeq \,
{\rm S}_3(\KK^3) \,\oplus \,
{\rm S}_{21}(\KK^3) \, \oplus \,
{\rm S}_{21}(\KK^3) \, \oplus \,
{\rm S}_{111}(\KK^3) \,\,\,\, {\rm or} \,\,\,\,
\KK^{27}_s \, \simeq \, \KK^{10}_a \oplus \KK^8_b \oplus \KK^8_c \oplus \KK^1_d.
$$
 The first and last component represent
symmetric and skew-symmetric tensors respectively.
The following linear change of coordinates makes the isomorphisms
above explicit:
$$ \begin{small}
\begin{matrix}
a_{111} = s_{111}, \,\,
a_{112} = s_{112} {+} s_{121} {+} s_{211},  \,\,
a_{113} = s_{113} {+} s_{131} {+} s_{311}, \,
a_{122} = s_{122} {+} s_{212} {+} s_{221}, \\
a_{123} = s_{123} {+} s_{132} {+} s_{213} {+} s_{231} {+} s_{312} {+} s_{321}, \quad
a_{133} = s_{133} {+} s_{313} {+} s_{331}, \quad
a_{222} = s_{222},   \\
a_{223} = s_{223} {+} s_{232} {+} s_{322},  \,
a_{233} = s_{233} {+} s_{323} {+} s_{332}, \,
a_{333} = s_{333},   \,
b_{121} =  {-} 4 s_{112} {+} 2 s_{121} {+} 2 s_{211},  \\
b_{122} = 2 s_{122} {+} 2 s_{212} {-} 4 s_{221},  \,
b_{123} = 2 s_{123} {+} 2 s_{213} {-} 2 s_{231} {-} 2 s_{321},  \,
b_{131} =  {-} 4 s_{113} {+} 2 s_{131} {+} 2 s_{311},   \\
b_{132} = 2 s_{132} {-} 2 s_{231} {+} 2 s_{312} {-} 2 s_{321}, \,
b_{133} = 2 s_{133} {+} 2 s_{313} {-} 4 s_{331},   \,
b_{232} =  {-} 4 s_{223} {+} 2 s_{232} {+} 2 s_{322},  \\
b_{233} = 2 s_{233} {+} 2 s_{323} {-} 4 s_{332},  \quad
c_{112} = 2 s_{112} {-} 4 s_{121} {+} 2 s_{211}, \quad
c_{113} = 2 s_{113} {-} 4 s_{131} {+} 2 s_{311},   \\
c_{122} = 2 s_{122} {-} 4 s_{212} {+} 2 s_{221},  \,
c_{123} = 2 s_{123} {-} 2 s_{213} {-} 2 s_{312} {+} 2 s_{321},  \,
c_{132} = 2 s_{132} {-} 2 s_{213} {+} 2 s_{231} {-} 2 s_{312},  \\
c_{133} = 2 s_{133} {-} 4 s_{313} {+} 2 s_{331},  \quad
c_{223} = 2 s_{223} {-} 4 s_{232} {+} 2 s_{322}, \quad
c_{233} = 2 s_{233} {-} 4 s_{323} {+} 2 s_{332},   \\
d_{123} = s_{123} {-} s_{132} {-} s_{213} {+} s_{231} {+} s_{312} {-} s_{321}
\end{matrix}
\end{small}
$$
The ideal $U_{3,3}$ of the universal variety is generated by a space of
$6 \cdot 2 + 3 \cdot 2 + 3 \cdot 3 + 6 \cdot 7 + 12 = 81 $ quadrics.
Its $\ZZ^3$-grading has $19$ components
that come in five symmetry classes:
$$
\begin{small}
\begin{matrix}
{\rm six} &
(4, 2, 0) & 3 a_{111} a_{122}-a_{112}^2 \, , &  
6 a_{111} b_{122}+3 a_{111} c_{122}+2 a_{112} b_{121}+a_{112} c_{112}, \smallskip \\
{\rm three} & (3,3,0) &
9 a_{111} a_{222}-a_{112} a_{122}\,, & 2 a_{112} b_{122} + a_{112} c_{122} 
+ 2 a_{122} b_{121} + a_{122} c_{112} , \smallskip \\
{\rm three}\! &
(4, 1, 1) & 3 a_{111} a_{123} - 2 a_{112} a_{113}  , &  6 a_{111} 
b_{123}{+}3 a_{111}c_{132}{-}3 a_{111} d_{123}{+}2 a_{112} b_{131}{+}a_{112} c_{113}, \\
& & & 
 6 a_{111} b_{132}{+}3 a_{111} c_{123}{+}3 a_{111} d_{123}
{+}2 a_{113} b_{121}{+}a_{113} c_{112} .
\end{matrix}
\end{small}
$$
\vspace{-0.05in}
$$
\begin{small}
\begin{matrix}
{\rm six} \,\,
(3, 2, 1) \,\,\,
6 a_{111} a_{223}-a_{112} a_{123}   ,\,
2 a_{112} b_{132}+a_{112} c_{123}+a_{112} d_{123}-2 a_{113} b_{122}
-a_{113} c_{122},  \ldots \, {\rm etc.}  \\
\end{matrix}
\end{small}
$$
There are seven quadrics in degree $(3,2,1)$. In addition, we have twelve quadrics
in the central degree $(2,2,2)$, like
$\,4 a_{112} b_{233}+2 a_{112} c_{233}-4 a_{122} b_{133}-2 a_{122} c_{133}-3 a_{123} d_{123}-4 a_{233} b_{121}-2 a_{233} c_{112} $. \hfill $ \diamondsuit $
\end{example}

\begin{table}[h]
\begin{center} \begin{tabular}{ | l | l | l | l | l | l | l | l | p{1.5cm} |} \hline  $d$ & $k$  & $a$ &$\dim$ & $\deg$ & gens \\ 
\hline 2 & 3 & 7  & 4 & 4 & 6 \\
\hline 2 & 4 & 15 & 7 & 12 & 33 \\
\hline 2 & 5  & 31  & 13 & 40  & 150  \\
\hline 3 & 3  & 26 & 13 & 24 & 81  \\
\hline 3 & 4 & 80 & 31 & ? & 954 \\
\hline 4 & 3 & 63 & 29 & 200 & 486\\
\hline 
 \end{tabular} 
\vspace{-0.11in}
\end{center}
  \caption{\label{tab:Udk}  Invariants of the ideal $U_{d,k}$ that defines the universal variety $\mathcal{U}_{d,k}$} \medskip
  \end{table}

We computed the ideals $U_{d,k}$ for all values of $d$ and $k$ with $d +k \leq 7$.
Since $\mathcal{U}_{d,2}$ equals the matrix signature variety $\mathcal{M}_{d,d}$ 
(cf. Table \ref{tab:Mdm}), we only consider $k\geq 3$.  The results are listed in Table \ref{tab:Udk}.
In each case we computed, we found that $U_{d,2}$ is generated by quadrics.
The last column gives the number of generators.
The second-to-last column reports the degree of the variety $\mathcal{U}_{d,k} \subset \PP^{d^k-1}$.

 In the first version of this paper we asked 
whether $U_{d,k}$ is always generated by quadrics.
This question has since been answered, to the negative, by Francesco Galuppi.
In Section \ref{sec:DimIde} we shall prove that the universal variety $\mathcal{U}_{d,k}$ has the expected dimension
$\lambda_{d,k}-1$.

\section{Piecewise Linear Paths and Polynomial Paths} \label{sec:pwlp}

This section is the heart of this paper. We introduce, study and relate the
signature varieties of two natural families of paths. These all live in the
universal varieties seen in Section \ref{sec:universal}.

\subsection{Polynomial Maps into Tensor Space}

We now study paths  $X : [0,1] \rightarrow \RR^d$
whose coordinates are polynomials of degree $m$
or  piecewise linear with $m$ pieces. Each of these is
represented by a real $d \times m$ matrix, also denoted by $X = (x_{ij})$.
With this convention,  a polynomial path has coordinate functions
\begin{equation}
\label{eq:path1}
 X_i(t) \,\, = \,\,  x_{i1} t + x_{i2} t^2 + x_{i3} t^3 + \cdots +  x_{im} t^m.
 \end{equation}
 The differential $1$-forms seen in the iterated integrals (\ref{eq:iteratedint}) are
\begin{equation}
\label{eq:diffform}
 {\rm d} X_{ i}(t) \,= \, X'_{ i}(t) {\rm d} t  \,=\,
\bigl( x_{ i1} + 2 x_{ i2} t  + 3 x_{ i3} t^2 + \cdots + m x_{ im} t^{m-1} \bigr) 
{\rm d} t .
\end{equation}
Each coordinate $\sigma_{i_1 i_2 \cdots i_k}$ of the tensor $\sigma^{(k)}(X)$ 
is a homogeneous polynomial of degree $k$ in the $dm$ unknowns $x_{ij}$
with coefficients in $\QQ$. Formulas for $d=2,k=3$ are shown in
Example~\ref{ex:d2k3}.
The  $d \times d \times \cdots \times d$
 tensor $\sigma^{(k)}(X)$ can    be computed from the
 $m \times m \times \cdots \times m$ 
 tensor $\sigma^{k}(C_{\rm mono})$ in Example~\ref{ex:cmp}
by multiplying each of its $k$ sides with the $d \times m$ matrix $X$.
This is the tensor analogue to the congruence action on matrix space seen
in (\ref{eq:XSXmono}). 

The $x_{ij}$ are homogeneous coordinates on the projective space $\PP^{dm-1}$ 
over an algebraically closed field $\KK$ that contains $\RR$.
The matrix-tensor multiplication described above defines a rational map of degree~$k$:
\begin{equation}
\label{eq:polymap}
\sigma^{(k)} \,: \,\,\PP^{dm-1} \dashrightarrow \PP^{d^k-1}\,,\,\,
X \mapsto \sigma^{(k)}(X). 
\end{equation}
The Zariski closure of the image of this map is the {\em polynomial signature variety}   $\mathcal{P}_{d,k,m}$. 
 The homogeneous prime ideal $P_{d,k,m}$ of this variety in $\KK[\sigma^{(k)}]$ 
is  the {\em polynomial signature~ideal}.

\begin{example}[$d=k=3, \,m=2$] \label{ex:itisbest} \rm
The third signature variety $\mathcal{P}_{3,3,2}$ for quadratic paths in $3$-space
lies in the space of $3 {\times} 3 {\times} 3$ tensors. Its linear span is the 
hyperplane $\PP^{25}$ defined~by
\begin{equation}
\label{eq:alternating}
\sigma_{123} - \sigma_{132}- \sigma_{213}+ \sigma_{231}
+\sigma_{312}-\sigma_{321} \,\, = \,\, 0. 
\end{equation}
It is best to write the $162$ quadrics in its ideal $P_{3,3,2}$ as in Example~\ref{ex:1424}.
\hfill $\diamondsuit $
\end{example}

Piecewise linear paths are also represented by $d \times m$ matrices $X$.
Their steps are the column vectors $\,X_1,\ldots,X_m \in \RR^d$.
To be explicit, our path has the following parametrization:
 $$ \begin{matrix}  t \,\mapsto \, X_1+\cdots+X_{i-1} \,+\, (mt-i+1)\cdot X_i\,\,
 \,\hbox{where}\,\,\frac{i{-}1}{m} \leq  t  \leq   \frac{i}{m}\,\,\,
\hbox{and}\, \,\, i=1,2,\ldots,m. \end{matrix}
$$
The  tensor $\sigma^{(k)}(X)$ is obtained from the ``upper triangular''
 $m \times m \times \cdots \times m$ 
 tensor $\sigma^{k}(C_{\rm axis})$ in Example~\ref{ex:cap}
by multiplying each of its $k$ sides with the matrix $X$.
This defines a rational map (\ref{eq:polymap}) of degree~$k$.
  The closure of its image 
 is the {\em piecewise linear signature variety}   $\mathcal{L}_{d,k,m}$.
 Its homogeneous prime ideal $L_{d,k,m}$ in $\KK[\sigma^{(k)}]$ is called the
   {\em piecewise linear signature ideal}.

\smallskip 

Let us reconcile these definitions with those for $k=2$ in Subsection \ref{sec:rsm},
 by viewing them through the lens of  Subsection \ref{sec:manylives}.
We are ultimately interested in 
\begin{equation}
\label{eq:PLPL}
  \begin{matrix}
 \Pim_{d,k,m}  \,\,:= \,\, \{ \,\,\sigma^{(k)}(X) \ : \ X : [0,1] \to \RR^d \ \text{ polynomial path of degree $\le m$}\, \},  \\
  \Lim_{d,k,m}\,  :=  \, \{\, \sigma^{(k)}(X) \ : \ X : [0,1] \to \RR^d \ \text{piecewise linear with $m$ segments}   \} . 
  \end{matrix}
  \end{equation}
  These signature images are semialgebraic subsets of $(\RR^{d})^{\otimes k}$. 
In this section we study the polynomials that vanish on these sets.
They are recorded in the ideals $P_{d,k.m}$ and $L_{d,k,m}$.
Equivalently, we examine the tightest outer approximations
of (\ref{eq:PLPL}) by algebraic varieties.
Finding inequalities for
$ \Pim_{d,k,m}$  inside $  \mathcal{P}^\RR_{d,k,m} $,
and for $ \Lim_{d,k,m}$  inside $  \mathcal{L}^\RR_{d,k,m} $,
is left to future research.

\begin{remark}  \label{rem:P=L} \rm 
If $m \leq d $ then $\mathcal{P}_{d,k,m}$
and $\mathcal{L}_{d,k,m}$ are closures of ${\rm GL}(d,\KK)$-orbits. \vspace{-0.1in}
\begin{enumerate}
\item[(a)] If $m=1$ then $X$ is a linear path and
$\mathcal{L}_{d,k,1}$ is the {\em Veronese variety},
whose points are symmetric tensors of rank $1$.
In general, $m$ plays a role similar to that of {\em tensor rank} in
multilinear algebra \cite{Lan}.
 \vspace{-0.1in}
\item[(b)]  The varieties $\mathcal{L}_{d,2,m} = \mathcal{P}_{d,2,m} = \mathcal{M}_{d,m}$
were determined in  Theorem \ref{thm:matricesmain}.
\end{enumerate}
\end{remark}

Let $X$ be the piecewise linear  path with steps $X_1,X_2,\ldots,X_m$ in $\RR^d$.
Chen \cite{Chen54, Chen57} showed that the $n$-step signature of the path $X$ is given by the
tensor product of tensor exponentials:
\begin{equation}
\label{eq:PL1}
 \sigma^{\leq n}(X) \,\,\, = \,\,\,
{\rm exp}(X_1) \,\otimes \,{\rm exp}(X_2) \,\otimes \,\cdots\, \otimes\, {\rm exp}(X_m)
 \quad \in \,\,T^n(\RR^d). 
 \end{equation}
  Hence the $k$th signature tensor of $X$ is the following element
 of $(\RR^d)^{\otimes k}$ or $\PP^{d^k-1}$:
 \begin{equation}
 \label{eq:PL2}
  \sigma^{(k)}(X) \,\, = \,\,\,\hbox{the sorted expansion of} \,\,\,
\frac{1}{k!} (X_1 + X_2 + X_3 + \cdots + X_m)^{\otimes k} . 
\end{equation}
Here, by ``sorted expansion'' we mean that every rank one summand
$\,X_{i_1} \otimes X_{i_2} \otimes \cdots \otimes X_{i_k} \,$ is to be replaced by the 
corresponding rank one summand where the $k$ indices are sorted.
This replacement is done after the expansion of 
the $m^k$ terms and prior to summing them.

\begin{corollary}
The $k$th signature tensor of a piecewise polynomial path equals
\begin{equation}
\label{eq:PL3}
\sigma^{(k)}(X) \,\, = \,\,\,
\sum_\tau\, \prod_{\ell=1}^m \frac{1}{ |\,\tau^{-1}(\ell)|\, !} \cdot
X_{\tau(1)} \otimes X_{\tau(2)} \otimes X_{\tau(3)} \otimes \cdots \otimes X_{\tau(k)}.
\end{equation}
The sum is over all weakly increasing functions
$\,\tau: \{1,2,\ldots,k\} \rightarrow \{1,2,\ldots,m\}$.
\end{corollary}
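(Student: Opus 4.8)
The plan is to read the formula \eqref{eq:PL3} directly off Chen's identity \eqref{eq:PL1} for the step-$n$ signature of the piecewise linear path $X$ with steps $X_1,\ldots,X_m$, so that no new analytic input is needed beyond what is recalled just above. First I would fix $n\ge k$ and expand each tensor exponential using the series $\exp(P)=\sum_{r\ge 0}\tfrac{1}{r!}P^{\otimes r}$:
\[
\sigma^{\le n}(X)\,=\,\exp(X_1)\otimes\exp(X_2)\otimes\cdots\otimes\exp(X_m)\,=\,\sum_{r_1,\ldots,r_m\ge 0}\ \Bigl(\prod_{\ell=1}^m\frac{1}{r_\ell!}\Bigr)\,X_1^{\otimes r_1}\otimes X_2^{\otimes r_2}\otimes\cdots\otimes X_m^{\otimes r_m},
\]
where the concatenation product is used and all terms of order exceeding $n$ are set to zero. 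Projecting onto the order-$k$ summand $(\RR^d)^{\otimes k}$ of \eqref{eq:Tn} retains exactly the terms with $r_1+\cdots+r_m=k$.

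Next I would match this with the right-hand side of \eqref{eq:PL3} through the elementary bijection $\tau\leftrightarrow(r_1,\ldots,r_m)$, $r_\ell:=|\tau^{-1}(\ell)|$, between weakly increasing functions $\tau:\{1,\ldots,k\}\to\{1,\ldots,m\}$ and tuples of nonnegative integers summing to $k$. A weakly increasing $\tau$ takes its values in $m$ consecutive constant blocks of lengths $r_1,\ldots,r_m$, so
\[
X_{\tau(1)}\otimes X_{\tau(2)}\otimes\cdots\otimes X_{\tau(k)}\,=\,X_1^{\otimes r_1}\otimes X_2^{\otimes r_2}\otimes\cdots\otimes X_m^{\otimes r_m}\quad\text{and}\quad\prod_{\ell=1}^m\frac{1}{|\tau^{-1}(\ell)|!}=\prod_{\ell=1}^m\frac{1}{r_\ell!}.
\]
Substituting term by term converts the order-$k$ component computed above into precisely \eqref{eq:PL3}, which proves the Corollary. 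The same conclusion follows equally well from the equivalent formula \eqref{eq:PL2}: expanding $(X_1+\cdots+X_m)^{\otimes k}$ into its $m^k$ rank-one summands and collecting those sharing a common sorted form, the $\binom{k}{r_1,\ldots,r_m}=k!/\prod_\ell r_\ell!$ summands whose index string has value-multiplicities $(r_1,\ldots,r_m)$ all contribute the same sorted tensor, and the prefactor $1/k!$ cancels the multinomial coefficient.

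I do not expect a genuine obstacle here, since the whole argument is bookkeeping. The only steps warranting a sentence of care are the interchange of the finitely many sums when multiplying out the $m$ exponentials in the truncated algebra, and the description of the bijection $\tau\leftrightarrow(r_1,\ldots,r_m)$ via the block structure of a weakly increasing function; in the route through \eqref{eq:PL2} one instead invokes the standard count of words with prescribed letter frequencies for the multinomial coefficient. None of this is subtle, and the statement follows.
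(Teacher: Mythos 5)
Your proposal is correct. The paper states this corollary without an explicit proof, treating it as an immediate consequence of the sorted-expansion description in (\ref{eq:PL2}); your second route (collecting the $\binom{k}{r_1,\ldots,r_m}$ terms in $(X_1+\cdots+X_m)^{\otimes k}$ that share a sorted form and cancelling the $1/k!$) is exactly that implicit argument, and your first route (expanding the tensor exponentials in (\ref{eq:PL1}) and extracting the degree-$k$ piece) is an equally direct and equivalent bookkeeping.
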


\begin{example} \rm \label{ex:hereareforumulas}
The third signature $(k=3)$ of a piecewise linear path $X$ equals
$$ \sigma^{(3)}(X) \,\, = \,\,\,
\frac{1}{6} \cdot \sum_{i=1}^m X_i^{\otimes 3}\,\,\,+\,\,
\frac{1}{2} \cdot \!\! \sum_{1 \leq i < j \leq m}\!\! \! \bigl(X_i^{\otimes 2} \otimes X_j \,+
\,X_i \otimes X_j^{\otimes 2} \bigr) \,\,\, + \! \sum_{1 \leq i < j < l \leq m} \!\!\!\!\!
X_i \otimes X_j \otimes X_l .
$$
The fourth signature $(k=4)$ of a two-step path  $(m=2)$ 
is the $d {\times} d {\times} d {\times} d$ tensor
\begin{equation}
\label{eq:dddd}
 \sigma^{(4)}(X) \,\,= \,\,\,
\frac{1}{24} \cdot \biggl[\, X_1^{\otimes 4} \,+ \,
 4  \,X_1^{\otimes 3} \otimes X_2 \,+ \,
  6 \,X_1^{\otimes 2} \otimes X_2^{\otimes 2} \,+ \,
   4 \,X_1  \otimes X_2^{\otimes 3} \,+ \,
   X_2^{\otimes 4} \, \biggr].
\end{equation}
The projective variety $\mathcal{L}_{d,4,2} $ parametrizes
tensors in $\PP^{d^4-1}$ of this special form.
\hfill $\diamondsuit $
\end{example}

\begin{example}[$d=m=2, k=4$] \label{eq:zweivierzwei} \rm
Consider paths consisting of two segments
 $X_1 = ({\tt a},{\tt b})$ and $X_2 = ({\tt A}, {\tt B})$.
The following  {\tt Macaulay2} code realizes the equation (\ref{eq:dddd})
and it computes the ideal $L_{2,4,2}$:
\begin{small}
  \begin{verbatim}
R = QQ[s1111,s1112,s1121,s1122,s1211,s1212,s1221,s1222,
       s2111,s2112,s2121,s2122,s2211,s2212,s2221,s2222];
S = QQ[ a,b, A,B ];
f = map(S,R,{
   a*a*a*a + 4*a*a*a*A + 6*a*a*A*A + 4*a*A*A*A + A*A*A*A,
   a*a*a*b + 4*a*a*a*B + 6*a*a*A*B + 4*a*A*A*B + A*A*A*B,
   a*a*b*a + 4*a*a*b*A + 6*a*a*B*A + 4*a*A*B*A + A*A*B*A,
   a*a*b*b + 4*a*a*b*B + 6*a*a*B*B + 4*a*A*B*B + A*A*B*B,
   a*b*a*a + 4*a*b*a*A + 6*a*b*A*A + 4*a*B*A*A + A*B*A*A,
   a*b*a*b + 4*a*b*a*B + 6*a*b*A*B + 4*a*B*A*B + A*B*A*B,
   a*b*b*a + 4*a*b*b*A + 6*a*b*B*A + 4*a*B*B*A + A*B*B*A,
   a*b*b*b + 4*a*b*b*B + 6*a*b*B*B + 4*a*B*B*B + A*B*B*B,
   b*a*a*a + 4*b*a*a*A + 6*b*a*A*A + 4*b*A*A*A + B*A*A*A,
   b*a*a*b + 4*b*a*a*B + 6*b*a*A*B + 4*b*A*A*B + B*A*A*B,
   b*a*b*a + 4*b*a*b*A + 6*b*a*B*A + 4*b*A*B*A + B*A*B*A,
   b*a*b*b + 4*b*a*b*B + 6*b*a*B*B + 4*b*A*B*B + B*A*B*B,
   b*b*a*a + 4*b*b*a*A + 6*b*b*A*A + 4*b*B*A*A + B*B*A*A,
   b*b*a*b + 4*b*b*a*B + 6*b*b*A*B + 4*b*B*A*B + B*B*A*B,
   b*b*b*a + 4*b*b*b*A + 6*b*b*B*A + 4*b*B*B*A + B*B*B*A,
   b*b*b*b + 4*b*b*b*B + 6*b*b*B*B + 4*b*B*B*B + B*B*B*B});
P = kernel f;
toString mingens P
dim P, degree P, betti mingens P
\end{verbatim}
\end{small}
The output produced by this code reveals that the  variety 
$\mathcal{L}_{2,4,2}$ is a threefold
of degree $24$ in a hyperplane $\PP^{14}$ inside
the space $\PP^{15}$ of  $2 \times 2 \times 2 \times 2$ tensors.
The ideal $L_{2,4,2}$ has $55$ quadratic minimal generators.
\hfill $\diamondsuit $
\end{example}

\subsection{Inclusions and Separating Invariants}

Our aim in this subsection is to compare 
polynomial paths and piecewise linear paths. 
Some intuition for our choice of these two families
is offered in Remark \ref{rem:kontsevich}. Of course,
numerous other families would also be interesting,
including piecewise-quadratic paths, trigonometric paths, etc.
Polynomial paths and piecewise linear paths have the
same signature tensors for all cases in Remark \ref{rem:P=L}.
Our next result states that this also holds when $m$ is large:

\begin{theorem} \label{thm:chains}
We have the following chains of inclusions between the $k$th Veronese variety
and the $k$th universal  variety. Here $M$ and $M'$ are constants
that depend only on $d$ and $k$:
$$
\begin{matrix}
\nu_k(\PP^{d-1}) =   \mathcal{L}_{d,k,1} \subset  \mathcal{L}_{d,k,2} \subset  \mathcal{L}_{d,k,3\,} 
\subset \, \cdots\, \subset\, \mathcal{L}_{d,k,M-1}
\subset\, \mathcal{L}_{d,k,M}  \,=\, \mathcal{U}_{d,k} \,\subset \,
\PP^{d^k-1},
\\ \nu_k(\PP^{d-1}) = 
 \mathcal{P}_{d,k,1} \subset \mathcal{P}_{d,k,2} \subset  \mathcal{P}_{d,k,3\,} 
\subset \, \cdots\, \subset\, \mathcal{P}_{d,k,M'-1}
\subset\, \mathcal{P}_{d,k,M'}  \,=\, \mathcal{U}_{d,k} \,\subset \,
\PP^{d^k-1}. 
\end{matrix}
$$
\end{theorem}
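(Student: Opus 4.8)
The plan is to verify, for each of the two families $\mathcal{F}_{d,k,m}\in\{\mathcal{L}_{d,k,m},\mathcal{P}_{d,k,m}\}$, three facts: (i) the inclusions $\mathcal{F}_{d,k,m}\subseteq\mathcal{F}_{d,k,m+1}$; (ii) that $\mathcal{F}_{d,k,1}$ is the $k$th Veronese variety; and (iii) that $\mathcal{F}_{d,k,m}=\mathcal{U}_{d,k}$ once $m$ exceeds a bound depending only on $d$ and $k$. Fact (i) is immediate: a piecewise linear path with $m$ segments is one with $m+1$ segments whose last step is the zero vector, and a polynomial path of degree $\le m$ is one of degree $\le m+1$ with vanishing coefficient of $t^{m+1}$; these specializations are visible in (\ref{eq:path1}) and (\ref{eq:PL3}), so $\Lim_{d,k,m}\subseteq\Lim_{d,k,m+1}$ and $\Pim_{d,k,m}\subseteq\Pim_{d,k,m+1}$, and one passes to Zariski closures. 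For (ii), when $m=1$ both families consist of the linear paths $t\mapsto tX_1$, for which $\sigma^{(k)}(X)=\tfrac{1}{k!}X_1^{\otimes k}$; projectivizing, the image is $\nu_k(\PP^{d-1})$, which is already closed (compare Remark~\ref{rem:P=L}(a)).

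The substance is fact (iii). First one checks $\mathcal{L}_{d,k,m},\mathcal{P}_{d,k,m}\subseteq\mathcal{U}_{d,k}$ for every $m$: polynomial paths are smooth, so the Chen--Chow Theorem~\ref{thm:chen} applies directly, while for a piecewise linear path Chen's formula (\ref{eq:PL1}) realizes $\sigma^{\le k}(X)$ as a concatenation product of the group-like elements ${\rm exp}(X_i)$, hence as a point of $\mathcal{G}_{d,k}$, whose degree-$k$ component lies on $\mathcal{U}_{d,k}$. Thus $\mathcal{L}_{d,k,1}\subseteq\mathcal{L}_{d,k,2}\subseteq\cdots$ and $\mathcal{P}_{d,k,1}\subseteq\mathcal{P}_{d,k,2}\subseteq\cdots$ are ascending chains of irreducible subvarieties of the fixed variety $\mathcal{U}_{d,k}$; since their dimensions are non-decreasing, bounded, and strictly increase at each proper inclusion, both chains stabilize, say at $M=M(d,k)$ and $M'=M'(d,k)$. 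It remains to show the stabilized members are all of $\mathcal{U}_{d,k}$, equivalently that $\bigcup_m\mathcal{L}_{d,k,m}$ and $\bigcup_m\mathcal{P}_{d,k,m}$ are Zariski-dense in $\mathcal{U}_{d,k}$.

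For piecewise linear paths I would argue inside the group $\mathcal{G}_{d,k}$. Put $V_m=\{{\rm exp}(X_1)\otimes\cdots\otimes {\rm exp}(X_m):X_i\in\KK^d\}$; by (\ref{eq:PL1}) the image of $V_m$ under projection to the degree-$k$ summand of $T^k(\KK^d)$ parametrizes the affine cone over $\mathcal{L}_{d,k,m}$. One has $V_1\subseteq V_2\subseteq\cdots$ with $V_a\otimes V_b\subseteq V_{a+b}$; each $V_m$ contains $1={\rm exp}(0)$ and is closed under inversion since ${\rm exp}(X_i)^{-1}={\rm exp}(-X_i)$. The ascending chain of irreducible varieties $\overline{V_1}\subseteq\overline{V_2}\subseteq\cdots$ in $\mathcal{G}_{d,k}$ stabilizes to some $W=\overline{V_M}$, and continuity of the multiplication and inversion morphisms of $\mathcal{G}_{d,k}$ then makes $W$ a closed irreducible subgroup. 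Since $W\supseteq V_1$ contains the one-parameter subgroups ${\rm exp}(tv)$, $v\in\KK^d$, its Lie algebra contains $\KK^d$, hence---being a Lie subalgebra containing $\KK^d$---equals the smallest such, namely ${\rm Lie}^k(\KK^d)$; as $\dim\mathcal{G}_{d,k}=\dim{\rm Lie}^k(\KK^d)$ by Theorem~\ref{thm:lyndonmain} and $\mathcal{G}_{d,k}$ is irreducible, this forces $W=\mathcal{G}_{d,k}$. Projecting to degree $k$ and using that projection commutes with Zariski closure (elimination) gives $\mathcal{L}_{d,k,M}=\mathcal{U}_{d,k}$, and hence $\mathcal{L}_{d,k,m}=\mathcal{U}_{d,k}$ for all $m\ge M$.

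For polynomial paths the quickest route is approximation. By the Chen--Chow Theorem~\ref{thm:chen}, $\mathcal{U}_{d,k}$ is the Zariski closure of the set of signature tensors $\sigma^{(k)}(X)$ over all smooth paths $X$ in $\RR^d$. Every such $X$ is a $C^1$-limit of polynomial paths (Weierstrass, or explicit Bernstein, approximation), and $X\mapsto\sigma^{(k)}(X)$ is continuous under $C^1$---indeed bounded-variation---convergence, so each $\sigma^{(k)}(X)$ with $X$ smooth is a Euclidean limit of points of $\bigcup_m\Pim_{d,k,m}$; since the Euclidean closure lies inside the Zariski closure, $\mathcal{U}_{d,k}\subseteq\overline{\bigcup_m\mathcal{P}_{d,k,m}}=\mathcal{P}_{d,k,M'}$, and the reverse inclusion yields $\mathcal{P}_{d,k,M'}=\mathcal{U}_{d,k}$. (The same approximation, applied to piecewise linear interpolations of smooth paths, also settles the piecewise linear case, giving an analytic alternative to the group-theoretic argument.) I expect the main obstacle to be making (iii) airtight: confirming that $\overline{V_M}$ is genuinely an algebraic subgroup and that its Lie algebra is computed correctly, that projection to the degree-$k$ summand commutes with Zariski closure (so that one may pass between $\mathcal{G}_{d,k}$ and $\mathcal{U}_{d,k}$), and---in the polynomial case---that the continuity property invoked for the signature really holds for the mode of approximation used.
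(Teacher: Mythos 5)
Your proof is correct, but it diverges from the paper's in a way worth noting. For the piecewise linear chain, the paper simply cites Chow's Theorem (\cite[Theorem 7.28]{FV}), which says that the signatures in (\ref{eq:PL1}) of piecewise linear paths already fill the free nilpotent Lie group $\mathcal{G}^n(\RR^d)$, and then takes Zariski closures of the projection. You instead give a self-contained algebraic-group-theoretic argument: the stabilized closure $W=\overline{V_M}$ is a closed algebraic subgroup of $\mathcal{G}_{d,k}$ whose Lie algebra contains $\KK^d$, hence equals the free nilpotent Lie algebra, forcing $W=\mathcal{G}_{d,k}$. This essentially reproves (the algebraic shadow of) Chow's theorem over $\KK$. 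It is longer than the citation but more transparent about why the group is filled, and it works directly over the algebraically closed field without a detour through $\RR$; the small details you should tighten are that $\overline{V_M}\cdot\overline{V_M}\subseteq\overline{V_{2M}}=\overline{V_M}$ uses that multiplication is a morphism and that $\overline{A\times B}=\overline{A}\times\overline{B}$ in the Zariski topology, and that the identification of the tangent space of $\mathcal{G}_{d,k}$ at the identity with ${\rm Lie}^k(\KK^d)$ uses that $\exp$ is a polynomial isomorphism with polynomial inverse $\log$. For the polynomial chain, both you and the paper use Weierstrass approximation, but the targets differ: the paper approximates a fixed piecewise linear path by polynomials (which requires the reparametrization trick to make the target $C^1$), whereas you approximate arbitrary smooth paths (which are already $C^1$, so the trick is unnecessary); both routes then pass from Euclidean density to Zariski density. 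Your version is slightly cleaner here, but it invokes the Chen--Chow characterization of $\mathcal{U}_{d,k}^{\rm im}$ as the set of signatures of smooth paths, which the paper's route avoids needing at that step.
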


\begin{proof}
Chow's Theorem \cite[Theorem 7.28]{FV}  says that the
elements~in the step-$n$ free Lie group $\mathcal{G}^n(\RR^d)$
are precisely the signatures (\ref{eq:PL1})  of piecewise linear paths.
The projection of $\mathcal{G}^n(\RR^d)$ into the $k$th factor of (\ref{eq:Tn})
is the universal signature image $\mathcal{U}_{d,k}^{\rm im}$. We conclude 
$$ \mathcal{U}_{d,k}^{\rm im} \,\, = \,\, \bigcup_{m=1}^\infty \mathcal{L}_{d,k,m}^{\rm im} . $$
By passing to Zariski closures, we obtain the same result for the projective varieties, namely
$\, \mathcal{U}_{d,k} =  \bigcup_{m=1}^\infty \mathcal{L}_{d,k,m} $.
The right hand side is a nested family of irreducible varieties, all contained in
$\mathcal{U}_{d,k} \subset \PP^{d^k-1}$. The number of distinct varieties
in such a nested family is certainly bounded above by $d^k$. Hence there exists an integer $M$
such that $\, \mathcal{L}_{d,k,M}  \,\,=\, \mathcal{U}_{d,k} $.

Similarly, among the inclusions $\mathcal{P}_{d,k,i} \subset \mathcal{P}_{d,k,i+1}$,
only finitely many can be strict, since each set is an irreducible variety in $\PP^{d^k-1}$.
Hence there exists a positive integer $M'$ such that $\mathcal{P}_{d,k,M'} = \mathcal{P}_{d,k,M'+j}$ 
for all $j > 0$. Suppose that this terminal signature variety $\mathcal{P}_{d,k,M'}$ is 
strictly contained in $\mathcal{U}_{d,k}$.
Then there exists a piecewise linear path $X$ such that $S = \sigma^{(k)}(X)$ is not
in $\mathcal{P}_{d,k,M'}$. In particular, there exists a polynomial
$\,f \in P_{d,k,M'}\,$ such that $f(S) = 1$. 

 By the Weierstrass Approximation Theorem, the piecewise linear path 
 $X$ can be approximated arbitrarily closely by a sequence of polynomial paths
$X_\epsilon$ with $\epsilon \to 0$.  Here we employ a reparametrization with the property
that $X$ slows down (to velocity zero) before each kink and then speeds up again. 
This allows us to use the $C^1$ version of Weierstrass Approximation,
which is what is needed here.
The signature tensors $S_\epsilon = \sigma^{(k)}(X_\epsilon)$ of the nearby paths depend
continuously on $\epsilon$, and they satisfy $f(S_\epsilon) = 0$ for all $\epsilon > 0$. This implies
$$ 0\, =\, {\rm lim}_{\epsilon \to 0} f( S_\epsilon) \,=\, f \bigl( \,{\rm lim}_{\epsilon \to 0} \,S_\epsilon \,\bigr) 
\,=\, f(S) \,=\, 1. $$
From this contradiction we now conclude that $\mathcal{P}_{d,k,M'}  = \mathcal{U}_{d,k}$.
\end{proof}

It was shown in Section \ref{sec:vsm}
that the signature matrices of piecewise linear and polynomial 
paths are the same. This result does not
extend to signature tensors:

\begin{theorem} \label{thm:different}
The two-segment paths and the quadratic paths in the plane $\RR^2$
have different signature threefolds. More precisely, for $k \geq 3$
we have $\,\mathcal{L}_{2,k,2} \not= \mathcal{P}_{2,k,2}\,$ 
in $\,\PP^{2^k-1}$.
\end{theorem}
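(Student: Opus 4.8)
The plan is to separate $\mathcal{L}_{2,k,2}$ from $\mathcal{P}_{2,k,2}$ by producing, for each $k\ge 3$, a homogeneous form that vanishes on one of the two varieties but not at an explicit signature tensor lying on the other; since each variety is the Zariski closure of a family of signature tensors, any such form forces $\mathcal{L}_{2,k,2}\ne\mathcal{P}_{2,k,2}$. For $k=3$ I would simply invoke Example~\ref{ex:d2k3}: the quadric $3\sigma_{121}^2-\sigma_{111}\sigma_{122}-10\sigma_{112}\sigma_{211}+2\sigma_{121}\sigma_{211}+2\sigma_{211}^2+11\sigma_{111}\sigma_{212}-7\sigma_{111}\sigma_{221}$ lies in $P_{2,3,2}$, hence vanishes on $\mathcal{P}_{2,3,2}$, but at $\sigma^{(3)}(C_{\rm axis})$ — whose only nonzero order-$3$ entries are $\sigma_{111}=\sigma_{222}=\tfrac16$ and $\sigma_{112}=\sigma_{122}=\tfrac12$ by Example~\ref{ex:cap} — every monomial drops out except $-\sigma_{111}\sigma_{122}$, leaving $-\tfrac1{12}\ne 0$. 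So $\sigma^{(3)}(C_{\rm axis})\in\mathcal{L}_{2,3,2}\setminus\mathcal{P}_{2,3,2}$.

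For $k\ge 4$ I would instead use an $\mathrm{SL}(2)$-equivariant double contraction with the area form $\epsilon$ on $\KK^2$ ($\epsilon_{12}=1=-\epsilon_{21}$, $\epsilon_{11}=\epsilon_{22}=0$) and take the linear form
$$ g_k(T)\,:=\,\sum_{i_1,i_2,i_3,i_4}\epsilon_{i_1 i_2}\epsilon_{i_3 i_4}\,T_{i_1 i_2 i_3 i_4\,2\cdots 2}\,=\,T_{1212\,2\cdots2}-T_{1221\,2\cdots2}-T_{2112\,2\cdots2}+T_{2121\,2\cdots2}, $$
where the tail consists of $k-4$ copies of the index $2$. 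The claim to verify is that $g_k$ vanishes on all of $\mathcal{L}_{2,k,2}$: by Chen's formula~(\ref{eq:PL2}) a two-segment path with steps $Y_1,Y_2$ has $\sigma^{(k)}=\sum_a\tfrac1{a!(k-a)!}Y_1^{\otimes a}\otimes Y_2^{\otimes(k-a)}$, and contracting its first two slots against $\epsilon$ kills every term except $a=1$ and leaves the pure power $\tfrac{\det(Y_1\mid Y_2)}{(k-1)!}\,Y_2^{\otimes(k-2)}$; an $\epsilon$-contraction of two slots of a pure power $v^{\otimes m}$ is $\epsilon(v,v)v^{\otimes(m-2)}=0$. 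Hence $g_k$ annihilates every two-segment signature and so vanishes on the closure $\mathcal{L}_{2,k,2}$. On the other hand, evaluating $g_k$ at $\sigma^{(k)}(C_{\rm mono})$ with the product formula of Example~\ref{ex:cmp} — the four words involved have first-four partial sums among $(1,3,4,6),(1,3,5,6),(2,3,4,6),(2,3,5,6)$ and a common tail factor $\tfrac{6}{(k-1)!}$ — gives $g_k(\sigma^{(k)}(C_{\rm mono}))=\tfrac1{30(k-1)!}\ne 0$ (for $k=4$ this is the scalar $\tfrac1{180}$). Thus $\sigma^{(k)}(C_{\rm mono})\in\mathcal{P}_{2,k,2}\setminus\mathcal{L}_{2,k,2}$, and $\mathcal{L}_{2,k,2}\ne\mathcal{P}_{2,k,2}$.

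The main obstacle is that there is no single uniform separator: the $\epsilon\otimes\epsilon$ form needs four tensor slots, so $k=3$ is genuinely exceptional and has to be settled using the explicit (Gr\"obner-computed) generators of $P_{2,3,2}$ recorded in Example~\ref{ex:d2k3}. Within the $k\ge 4$ argument the only step that is not bookkeeping is the structural observation that a two-segment signature collapses to a pure tensor power after a single $\epsilon$-contraction — a direct consequence of Chen's exponential formula — which is exactly what forces $g_k$ to vanish on $\mathcal{L}_{2,k,2}$ while a genuine curve such as $C_{\rm mono}$ escapes it; verifying the non-vanishing at $C_{\rm mono}$ is then a short computation with the product formula.
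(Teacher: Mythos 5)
Your argument is correct, and for $k\ge 4$ it is genuinely cleaner and more uniform than the paper's. The paper splits into three regimes: $k=3$ compares the explicit Gr\"obner-computed generators of $P_{2,3,2}$ and $L_{2,3,2}$ (coefficient $10$ vs.\ $9$ in (\ref{eq:10versus9})); $k=4$ invokes the Diehl--Reizenstein linear invariants $\ell_1,\ell_2$ from (\ref{eq:linearinvariants}) and checks that the absolute invariant $\ell_1/\ell_2$ takes values $0$ and $1/5$ on $C_{\rm axis}$ and $C_{\rm mono}$; and $k\ge 5$ is dispatched with a one-line remark about ``embedding small tensors into bigger ones.'' Your $k=3$ case uses the same quadric from $P_{2,3,2}$ but concludes by showing it is nonzero at $\sigma^{(3)}(C_{\rm axis})$ (value $-1/12$), a slightly different route to the same place. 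Your $k\ge 4$ case is where the approaches diverge: your $g_k$ agrees with $\ell_1$ when $k=4$, but instead of citing a computation you prove its vanishing on $\mathcal{L}_{2,k,2}$ structurally, by observing that after a single $\epsilon$-contraction of two slots a two-segment signature $\sum_a\tfrac{1}{a!(k-a)!}Y_1^{\otimes a}\otimes Y_2^{\otimes(k-a)}$ collapses to a scalar multiple of the pure power $Y_2^{\otimes(k-2)}$ (only the $a=1$ term survives), which is then annihilated by the second $\epsilon$-contraction. Padding with the $2\cdots 2$ tail extends this uniformly to all $k\ge 4$, thereby replacing the paper's sketchy treatment of $k\ge 5$ with an explicit separating linear form. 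Both numerical evaluations check out (I verified $g_k(\sigma^{(k)}(C_{\rm mono}))=\tfrac{1}{30(k-1)!}$, in particular $\tfrac{1}{180}$ for $k=4$), and the real points you evaluate at do lie on the respective projective varieties by (\ref{eq:PLPL}) and the inclusions of Subsection \ref{sec:manylives}. The $\epsilon\otimes\epsilon$-contraction observation is a genuine structural insight that the paper's proof does not contain.
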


\begin{proof}[Proof and Discussion]
For the sake of exposition, we show
 different proof techniques for $k=3$ and $k=4$.
 The case $k \geq 5$ is obtained by embedding
 small tensors into bigger ones.
 
We begin with $k=3$. A computation shows that
 $M=M'=3$ in Theorem  \ref{thm:chains}.
The ideal  $U_{2,3} = L_{2,3,3} = P_{2,3,3} $ 
is generated by six quadrics, displayed in Example
\ref{ex:twothree}. Both  $P_{2,3,2}$ and $L_{2,3,2}$
are generated by three quadrics modulo $U_{2,3}$.
For $P_{2,3,2}$  these three generators are
\begin{equation}                                                                                                     
\label{eq:10versus9}                                                                                                 
 \begin{matrix}                                                                                                      
              (2 \beta_1 + \gamma_1)^2 - {\bf 10} (\alpha_2 \gamma_1+3 \alpha_1 \gamma_2) ,\\                        
              (2 \beta_1 + \gamma_1) (2 \beta_2 + \gamma_2) + {\bf 10} (\alpha_3 \gamma_1+\alpha_2 \gamma_2), \\     
              (2 \beta_2 + \gamma_2)^2 - {\bf 10} (\alpha_3 \gamma_2 + 3 \alpha_4 \gamma_1).                         
\end{matrix}                                                                                                         
\end{equation}                                                                                                       
The generators of $L_{2,3,2}$ are obtained                                                             
by replacing the coefficient ${\bf 10}$ by ${\bf 9}$.                                                                

Now let $k=4$. The varieties $\mathcal{P}_{2,4,2}$ and
$\mathcal{L}_{2,4,2}$ are orbit closures for the ${\rm GL}(d,\KK)$-action on $\PP^{15}$.
We can use invariant theory to show that these orbits are different.
According to Diehl and Reizenstein \cite[Remark 14]{DR},
the space of ${\rm SL}(d,\KK)$-invariants  linear forms
 on $(\KK^2)^{\otimes 4}$ has dimension $2$ and
is spanned~by
\begin{equation}
\label{eq:linearinvariants}
 \begin{matrix}   & \ell_1 &\! =\! & \sigma_{1212}-\sigma_{1221}-\sigma_{2112}+\sigma_{2121} \\ \,\,
 {\rm and} \,\, & \ell_2 & \! =  \! &  \sigma_{1122}-\sigma_{1221}-\sigma_{2112}+\sigma_{2211}.
 \end{matrix}
\end{equation}
Their ratio $\ell_1/\ell_2$ is an absolute invariant, i.e.~a
 rational function on $\PP^{15}$ that is constant on orbits.
It takes value $0$ on $C_{\rm axis}$ and value $1/5$ on $C_{\rm mono}$.
Hence the orbit closures  $\mathcal{L}_{2,4,2}$ and $\mathcal{P}_{2,4,2}$ are different.
Indeed, {\tt Macaulay2} confirms that $\ell_1$ and $5 \ell_1 - \ell_2$ are
the unique linear forms in the ideals $L_{2,4,2}$ and $P_{2,4,2}$.
This explains the hyperplane $\PP^{14}$  in Example \ref{eq:zweivierzwei}.
 \end{proof}

\begin{remark} \rm It is instructive to explore the
geometric meaning of linear invariants such as (\ref{eq:linearinvariants}).
These specify hyperplanes that contain our  signature varieties.
For instance, the invariant (\ref{eq:alternating}) is
the volume of the convex hull of the path in $\RR^3$,
provided the path contains no four coplanar points \cite[Proposition 22]{DR},
or it is a limit of such paths.
This volume is zero for paths in $\RR^3$ that lie in a plane.
Therefore, the linear form (\ref{eq:alternating})  is contained in 
both of the ideals $P_{3,3,2}$ and $L_{3,3,2}$.

The analogous statement holds in all dimensions $d$. Consider
the alternating sum over all permutations, $\,\sum_{I \in S_d} {\rm sign}(I) \sigma_I$.
This invariant measures the volume of the convex hull of a path in $\RR^d$, assuming 
no $d$ points on the path lie in a hyperplane.  If $d$ is even then such a path can be closed. 
Example \ref{ex:itisbest} can thus be generalized as follows.
The projective variety $\mathcal{P}_{d,d,d-1}$ has
dimension $d^2-d-1$, and it lies on one hyperplane, given by the invariant above.
\end{remark}

\begin{remark} \rm
\label{rem:kontsevich} One might wonder why polynomial and piecewise linear paths
are so similar.
The ideals $P_{d,k,m}$ and $L_{d,k,m}$ have the same
numbers of minimal generators in all cases discussed so far.
However, this is not always the case. Our next example will show this:
the ideal $L_{d,k,m}$ can have more minimal generators than $P_{d,k,m}$.
Both the similarities and the differences of our two models can perhaps
be understood via degenerations of polynomial maps
$\PP^1 \rightarrow \PP^{d-1}$ to trees of lines in $\PP^{d-1}$.
Indeed,  an algebraic geometer might speculate that our
signature varieties are related to {\em Kontsevich's space of stable maps}
from $\PP^1$ to~$\PP^{d-1}$.
\end{remark}

\begin{table}[h] \qquad 
 \begin{center} 
\begin{tabular}{ | l | l | l | l | l | l | l | l | p{1.5cm} |} \hline  $d$ & $k$ & $m$ & $a$ &$\dim$ & $\deg$ & gens \\ 
\hline 2 & 3 & 2 &7 & 3 & 6 & 9 \\
\hline 2 & 3 & $\geq$ 3 & 7 & 4 &  4 & 6 \\
\hline 2 & 4  & 2 &14 & 3 & 24 & 55  \\
\hline 2 & 4  & 3 &15 & 5 & $192^{\mathcal{P}}, 64^{\mathcal{L}}$ & $(33^{\mathcal{P}},34^{\mathcal{L}})$, $(0^{\mathcal{P}},3^{\mathcal{L}})$, ? \\
\hline 2 & 4  & $\geq$ 4 &15 & 7 & 12 & 33 \\
\hline 2 & 5  & 2 &25 & 3 & 60 & 220 \\
\hline 2 & 5  & 3 &31 & 5 &  $1266^{\mathcal{P}}$, $492^{\mathcal{L}}$ & $(160^{\mathcal{P}},185^{\mathcal{L}})$, ? \\
\hline 2 & 6  & 2 &41 & 3 & 120 & 670 \\
\hline 2 & 6  & 3 &62 & 5 & $4352^{\mathcal{P}}$, $1920^{\mathcal{L}}$ & $(945^{\mathcal{P}},1056^{\mathcal{L}})$, ? \\
\hline 3 & 3  & 2 &25 & 5 & 90 & 162 \\
\hline 3 & 3  & 3 &26 & 8 & $756^{\mathcal{P}},396^{\mathcal{L}}$  & $(83^{\mathcal{P}},91^{\mathcal{L}})$ , ? \\
\hline 3 & 4  & 2 &65 & 5 & 600 & 1536  \\
\hline 3 & 4  & 3 &80 & 8 &  ?& $(1242^{\mathcal{P}},1374^{\mathcal{L}})$  , ? \\
\hline 
 \end{tabular} 
\end{center}
  \caption{\label{tab:PLdkm}  Invariants of the ideals $P_{d,k,m}$, 
  $L_{d,k,m}$ that define the varieties $\mathcal{P}_{d,k,m}$, $\mathcal{L}_{d,k,m}$} \medskip
  \end{table}
 
Table \ref{tab:PLdkm} summarizes the computational results 
we found for $\mathcal{P}_{d,k,m}$ and $\mathcal{L}_{d,k,m}$. 
Since  $\mathcal{P}_{d,2,m} = \mathcal{L}_{d,2,m} = \mathcal{M}_{d,m}$, we only consider $k\geq 3$. 
The columns have the same meanings as in Tables \ref{tab:Mdm} and \ref{tab:Udk}.
We use upper indices $\mathcal{P}$ and $\mathcal{L}$ to mark
distinctions between the polynomial case  and the piecewise linear case.
All our computations were done with 
{\tt Macaulay2} \cite{M2} and {\tt Bertini} \cite{bertini}.
To compute the degree we imposed linear constraints on the variety in question. We then
pulled the resulting equations back to the parameter space, 
where we counted their complex zeros using either Gr\"obner bases or numerical homotopy methods.

\begin{example}[$d{=}2, k{=}4$] \rm \label{ex:zweivier}
We have $M = M'=4$ in Theorem \ref{thm:chains}. 
The varieties $\mathcal{P}_{2,4,m}$ and $\mathcal{L}_{2,4,m}$
have dimensions $1,3,5,7$  for $m=1,2,3,4$.
The case $m=2$ was discussed above.
The case $m=3$ reveals the distinction.
The varieties $\mathcal{P}_{2,4,3}$ and $\mathcal{L}_{2,4,3}$ live in $\PP^{15}$. 
Both are $5$-dimensional, but their degrees differ. 
The former has degree $192$; the latter has degree $64$.
The universal ideal $U_{2,4} = P_{2,4,4}  = L_{2,4,4}$ has $33$ quadrics, namely $4,7,11,7,4$ 
in bidegrees  $(2,6),(3,5),(4,4),(5,3),(6,2)$. These
are also the quadrics in $P_{2,4,3}$. But, $L_{2,4,3}$
has $34$ quadrics and $3$ cubics. The extra quadric 
in $L_{2,4,3} \backslash P_{2,4,3}$
has bidegree $(4,4)$. 
\hfill $\diamondsuit $
\end{example}

When computing the degrees of our signature varieties by pulling back linear equations to the
parameter spaces, it is useful to know that the varieties
are rationally identifiable. In particular, their dimensions should be  $dm-1$, as expected.
This is the case for all instances in Table \ref{tab:PLdkm}.
We conjecture that it holds in general.
 We will study this topic in Section~\ref{sec:DimIde}.

\subsection{A Question of Lyons and Xu}
\label{subsec:lyonsxu}

This subsection concerns {\em axis-parallel paths}.
Each step $X_i$ in such a path is a multiple 
$\,a_i \cdot e_{\nu_i}\,$ of a standard basis vector $e_{\nu_i}$. Combinatorially, these
are characterized by the sequence $\nu = (\nu_1,\nu_2,\ldots,\nu_m) \in \{1,2,\ldots,d\}^m$
of steps that are chosen for the path. The  signature tensors of order $k$ of axis-parallel paths
form an irreducible subvariety $\mathcal{A}_{\nu,k}$ of $\mathcal{L}_{d,k,m}$.
This variety is parametrized by varying the step lengths
$a_1,a_2,\ldots,a_m$.

Unlike all the other varieties seen so far in this paper,
the {\em axis-parallel signature variety} $\mathcal{A}_{\nu,k}$
is not invariant under ${\rm GL}(d,\KK)$. It would be interesting
to study the prime ideal $A_{\nu,k}$ that defines $\mathcal{A}_{\nu,k}$,
as well as geometric invariants and the fibers of its parametrization.

We illustrate the study of fibers by providing an affirmative answer 
to the following question due to Lyons and Xu  \cite[Question 2.5]{LX2}:
{\em Can one find a nontrivial lattice path with length shorter than $2^{n+1}$ such that the first $n$ levels in its signature are all zero?} Here {\em length} is taken to mean discrete standard lattice length: $l = \abs{a_1}+\abs{a_2}+\ldots+\abs{a_m}$. 

\begin{proposition} \label{prop:LyXu}
There exists an axis-parallel path in the plane $\RR^2$ with $m=8$ steps in alternating axis directions
and length $\,l=14 < 16 = 2^{n+1}\,$ whose first $n=3$ signature tensors are all zero.
\end{proposition}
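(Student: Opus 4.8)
The plan is to reduce the question to a small polynomial system and then exhibit an explicit solution. After translating the start to $0$, an axis-parallel path with $8$ steps in alternating axis directions has step vectors $X_{2j-1}=a_{2j-1}e_1$ and $X_{2j}=a_{2j}e_2$ for $j=1,\dots,4$, so it is encoded by $(a_1,\dots,a_8)\in\RR^8$ and has length $l=\sum_i|a_i|$. By Chen's formula (\ref{eq:PL1}),
\[
\sigma^{\leq 3}(X)\;=\;\exp(a_1e_1)\otimes\exp(a_2e_2)\otimes\cdots\otimes\exp(a_8e_2)\ \in\ T^3(\RR^2),
\]
so the first three signature tensors of $X$ vanish if and only if $\sigma^{\leq 3}(X)=1$, equivalently $\log\sigma^{\leq 3}(X)=0$ in $\mathrm{Lie}^3(\RR^2)$. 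Since $\dim\mathrm{Lie}^3(\RR^2)=\lambda_{2,3}=5$, with basis $e_1,e_2,[e_1,e_2],[e_1,[e_1,e_2]],[[e_1,e_2],e_2]$ as in Example~\ref{ex:P23M2}, this is a system of five polynomial equations in $a_1,\dots,a_8$: two linear ones saying the path closes up, one quadratic one saying the enclosed signed area is zero, and two cubic ones coming from the level-three component.

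First I would compute these equations explicitly by expanding the product above via Baker--Campbell--Hausdorff and truncating at degree $3$. Writing $c_j=a_1+a_3+\cdots+a_{2j-1}$ and $d_j=a_2+a_4+\cdots+a_{2j}$ for the partial coordinates of the resulting octagon, one obtains the closure conditions $c_4=d_4=0$, the area condition $c_1a_2+c_2a_4+c_3a_6=0$, and the level-three conditions $c_1^2a_2+c_2^2a_4+c_3^2a_6=0$ and $d_1^2a_3+d_2^2a_5+d_3^2a_7=0$ (the last two are, up to a factor $\tfrac12$, the coefficients of $[e_1,[e_1,e_2]]$ and $[[e_1,e_2],e_2]$ in $\log\sigma^{\leq 3}(X)$; one checks the formulas against Chen's formula (\ref{eq:PL1}) for small paths). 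The solution variety is $3$-dimensional, so short integer points are to be expected. I would then exhibit the path
\[
(a_1,\dots,a_8)\;=\;(-1,\ 1,\ 3,\ 2,\ 1,\ -1,\ -3,\ -2),
\]
for which $(c_1,c_2,c_3)=(-1,2,3)$ and $(d_1,d_2,d_3)=(1,3,2)$, so that $c_1a_2+c_2a_4+c_3a_6=-1+4-3=0$, $c_1^2a_2+c_2^2a_4+c_3^2a_6=1+8-9=0$ and $d_1^2a_3+d_2^2a_5+d_3^2a_7=3+9-12=0$; the steps alternate between the two axes and the length is $l=1+1+3+2+1+1+3+2=14<16=2^{n+1}$.

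Finally I would check that this path is genuinely nontrivial, i.e.\ not tree-like, so that it is a meaningful answer to the Lyons--Xu question: the word $x^{-1}yx^{3}y^{2}xy^{-1}x^{-3}y^{-2}$ on two letters is already freely reduced and nonempty, hence nontrivial in the free group (equivalently, one verifies directly that $\sigma^{(4)}(X)\neq 0$). The main work is the Baker--Campbell--Hausdorff computation of the two cubic equations, where the signs are delicate; the subsequent search for a short lattice solution is routine, provided one stays away from the degenerate locus of the solution variety (where some $c_j$ or $d_j$ vanishes and the octagon collapses to fewer segments or becomes tree-like), on which the exhibited point does not lie. This proves the proposition and shows that $2^{n+1}$ is not sharp for $n=3$.
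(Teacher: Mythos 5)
Your proposal is correct, and it takes a genuinely different and more conceptual route than the paper. The paper's proof is a direct Gr\"obner-basis computation: it writes out all $2+4+8$ entries of $\sigma^{(1)}(X),\sigma^{(2)}(X),\sigma^{(3)}(X)$ as polynomials in the step lengths, forms the ideal, computes its radical and full prime decomposition in {\tt Macaulay2} (fifteen components), discards the components that force some $a_i=0$, and picks an integer point on the surviving prime $\langle a_3+a_7,\, a_2+a_6,\, a_1+a_5,\, a_0+a_4,\, a_4 a_5-a_5 a_6+a_4 a_7+a_6 a_7\rangle$. You instead exploit the group-like structure up front: since $\sigma^{\le 3}(X)\in\mathcal{G}^3(\RR^2)$, the simultaneous vanishing of the first three levels is equivalent to $\log\sigma^{\le 3}(X)=0$ in the $5$-dimensional $\mathrm{Lie}^3(\RR^2)$, which reduces the $14$ polynomial conditions to $5$ with transparent geometric meaning (closure, enclosed area $\int x\,dy$, and the two third moments $\int x^2\,dy$, $\int y^2\,dx$ of the closed octagon), each of which becomes a short explicit expression in the partial sums $c_j,d_j$. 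You then exhibit a solution by hand. A small point worth tightening: the two cubic expressions equal the coefficients of $[e_1,[e_1,e_2]]$ and $[[e_1,e_2],e_2]$ in $\log\sigma^{\le 3}(X)$ only after the lower-level conditions are imposed (the BCH corrections vanish on that locus), not identically; your hedge that one ``checks the formulas against Chen's formula for small paths'' should be replaced by this observation. Your exhibited point $(-1,1,3,2,1,-1,-3,-2)$ is distinct from the paper's but lies on the same prime component (under the paper's indexing with $a_0$ as the last step, it satisfies all five of the paper's generators). What the paper's heavier computation buys in exchange for its opacity is a complete description of the solution variety, which is what makes the follow-up remark possible that $m\le 7$ forces every solution to be tree-like; your streamlined argument produces the desired path but not that classification.
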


\begin{proof}[Derivation and proof]
Consider the parametrization of the varieties
$\mathcal{A}_{\nu,k}$ where $k=1,2,3$ and $\nu = (1,2,1,2,1,2,1,2)$.
These represent axis paths with steps
$(a_1, 0),\, (0, a_2),\,(a_3, 0),\, (0, a_4),\, (a_5, 0),\, (0, a_6),\, (a_7, 0),\, (0, a_0)$.
For $k=1$ we have
$$ \sigma^{(1)}(X) = (\sigma_1,\sigma_2) 
\qquad \hbox{where} \quad \qquad \begin{matrix}
\sigma_1 &=& a_1 + a_3 + a_5 + a_7 , \\
\sigma_2 &=& a_0 + a_2 + a_4 + a_6 .\\
\end{matrix}
$$
The $2 \times 2$ signature matrix  $\sigma^{(2)}(X)$ has entries
$$  \begin{small} \begin{matrix}
\sigma_{11} & = &  \frac{1}{2}(a_1+a_3+a_5+a_7)^2 \\
\sigma_{12} & = & a_0a_1+ a_0 a_3+a_0 a_5+a_0 a_7+a_1a_2+a_1a_4+a_1a_6+a_3 a_4+a_3 a_6+a_5 a_6 \\
\sigma_{21} & = &a_2 a_3+a_2 a_5+a_2 a_7+a_4 a_5+a_4 a_7+a_6 a_7 \\
\sigma_{22} & = & \frac{1}{2}(a_0+a_2+a_4+a_6)^2
\end{matrix} \end{small}
$$
The signature tensor $\sigma^{(3)}(X)$ has format $2 \times 2 \times 2$. Its entries are the cubic
polynomials
\begin{small}
$$ \begin{matrix}
\sigma_{111} &=& \frac{1}{6} (a_1+a_3+a_5+a_7)^3 \\
\sigma_{112} &=& \!\! \frac{1}{2} a_0 a_1^2{+} \frac{1}{2} a_0 a_3^2 {+} \frac{1}{2} a_0 a_5^2 {+} \frac{1}{2} a_0 a_7^2 {+} \frac{1}{2} a_1^2 a_2{+} \frac{1}{2} a_1^2 a_4 {+} \frac{1}{2} a_1^2 a_6{+} \frac{1}{2} a_3^2 a_4{+} \frac{1}{2} a_3^2 a_6{+} \frac{1}{2} a_5^2 a_6 +\\
 & &a_1 a_3 a_4 {+}
 a_1 a_3 a_6 {+} a_1 a_5 a_6  {+}a_3 a_5 a_6 {+} a_0 a_1 a_3{+}
 a_0 a_1 a_5{+}a_0 a_1 a_7{+}  a_0 a_3 a_7 {+} a_0 a_3 a_5 {+} a_0 a_5 a_7 \\
\sigma_{121} & = &\!\! a_1 a_2 a_3
\! + \! a_1 a_2 a_5 \! + \! a_1 a_2 a_7 \!+\! a_1 a_4 a_5 \!+\!  a_1 a_4 a_7 
\!+\! a_1 a_6 a_7 \!+\! a_3 a_4 a_5 \!+\! a_3 a_4 a_7  \!+\! a_3 a_6 a_7\!+\! a_5 a_6 a_7 \\
 \cdots &   & \cdots \qquad \cdots \qquad \cdots \qquad \cdots \qquad \cdots \\
\sigma_{222} &=& \frac{1}{6} (a_0+a_2+a_4+a_6)^3
\end{matrix}
$$
\end{small}
We seek a common zero of these $2+4+8$ expressions with no zero coordinate. The radical of the ideal they generate in $\RR[a_1,\ldots,a_7,a_0 ]$
is the intersection of $15$ prime ideals. The first $14$ primes
 all contain at least one of the parameters $a_i$ as an element. The only exception is
$$
 \langle\, a_3+a_7\,,\,\,a_2+a_6\,,\,\,a_1+a_5\,,\,\, a_0+a_4\,,
 a_4 a_5-a_5 a_6+a_4 a_7+a_6 a_7 \,\rangle.
 $$
 Each zero of this ideal gives a path $X$ with $\sigma^{(k)}(X) = 0$ for $k=1,2,3$.
We choose 
$$ a_0 = -3, \,\, a_1 = 1, \,\,a_2 = 1, \,\,a_3 = -2, \,\,a_4 = 3, \,\,
a_5 = -1, \,\,a_6 = -1, \,\,a_7 = 2. $$
This axis-parallel path with eight steps in $\RR^2$ (see Figure \ref{fig:paths}) proves the claim.
\end{proof}

\begin{remark} \rm
The case $m=8$ is the smallest counterexample. If $m \leq 7$ then each
zero of the corresponding ideal in $\RR[a_1,\ldots,a_m]$ represents a tree-like path.
For instance, if $m=5$ then  $\langle \,a_3 \,, \,a_1+a_5 \, ,\,a_2+a_4 \, \rangle$
is a prime ideal each of whose zeros gives a tree-like path.
\end{remark}

\begin{figure}[ht!]
\centering
\includegraphics[width=42mm]{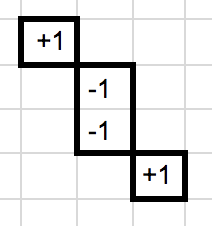}
\quad \quad \quad \quad
\includegraphics[width=50mm]{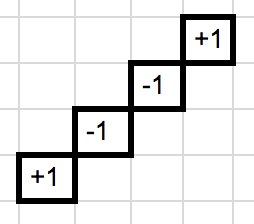}
\caption{\label{fig:paths} 
\textit{Left:} the path with 8 steps and length 14 from Proposition \ref{prop:LyXu}; \textit{Right:} path with 12 steps and length 16 from Lyons and Xu \cite{LX2}. Both paths have the first 3 signature levels equal to zero. The L\'evy areas are marked with the corresponding sign.}
\end{figure} 

\subsection{Rough Paths and the Rough Veronese} \label{sec:rough}

We now  connect our algebro-geometric study to
the theory of rough paths which underlies much recent progress in stochastic analysis.
This leads us  to a rough version of the  classical Veronese variety that 
represents linear paths $(m=1)$, as in Remark  \ref{rem:P=L} (a).

Every smooth path $X$ in $\RR^d$ lifts to a path in the Lie group
$\mathcal{G}^m (\RR^d) $, where  $t \in [0,1]$ is now mapped to the step-$m$ signature of $X|_{[0,t]}$.
In other words, we may replace the definite integrals in (\ref{eq:iteratedint})
and (\ref{eq:tensor}) with indefinite integrals that end at $t$ instead of~$1$.
We can also consider the same integrals starting at $s$ and ending at $t$.
The signature tensors of the resulting paths $X|_{[s,t]}$ are functions of $s$ and $t$ that satisfy the following
{\em H\"older condition}:
\begin{equation}
\label{eq:holderk}  \bigl|\, \sigma^{(k)} (X|_{[s,t]}) \,\bigr| \,\lesssim\, |\,t-s\,|^{k}  \quad
\hbox{for $\,0\le s < t \le1 \,$ and $\,k=1,\ldots,m$.}
\end{equation}
In Lyons' {\it rough path analysis} (see e.g. \cite{FH, FV, LyoICM} and references therein) this inequality 
now turns into a definition.   A {\em rough path of order $m$} is
any path in the Lie group $\mathcal{G}^m (\RR^d) $ that satisfies (\ref{eq:holderk}) but
with the exponent $k$ replaced with $k/m$. Given any rough path of order $m$, {\em Lyons' Extension Theorem}
provides a unique lift  to a rough path of order $n$, for all $n>m$,
such that the analytic estimate (\ref{eq:holderk}) remains valid for that extension. 
Every such rough path has a well-defined unique signature; that is, 
 $k$th signature tensors for any positive integer $k$.

Theorem \ref{thm:chains} furnishes two interpolations
$\,\mathcal{L}_{d,k,\bullet}\,$  and $\,\mathcal{P}_{d,k,\bullet}\,$ 
between the Veronese 
$\nu_k(\PP^{d-1})$ and  the universal variety $\mathcal{U}_{d,k}$ for fixed $d,k$.
In what follows we introduce a third such chain:
\begin{equation}
\label{eq:thirdsuchchain}
\nu_k(\PP^{d-1})\,=\,\mathcal{R}_{d,k,1}\,\subset\, \mathcal{R}_{d,k,2} \, \subset
\,\cdots\, \subset \, \mathcal{R}_{d,k,k}\,=\,\mathcal{U}_{d,k}.
\end{equation}
The rough Veronese variety $\mathcal{R}_{d,k,m}$ 
parametrizes  the $k$th signature tensors of the following set of order $m$ rough paths, 
indexed by Lie polynomials of degree $m$:
$$ \mathbb{X}^L \,:\, t \,\,\mapsto \,\, \mathbb{X}_t = \exp(tL)  \ , \quad L \in {\rm Lie}^m(\RR^d) \ . $$
This is a path in the Lie group $\mathcal{G}^m(\RR^d)$. It satisfies 
the H\"older condition $\, |\mathbb{X}^L_{s,t}| \lesssim \left\vert t-s\right\vert^{k/m}$ for 
$k \leq m$, with increments  $\mathbb{X}^L_{s,t}
\,=\,(\mathbb{X}^L_{s})^{-1}\otimes \mathbb{X}^L_{t}$ taken 
with respect to the group structure in 
$\mathcal{G}^m (\RR^{d}) $. 

\begin{remark} \rm We think of $\mathbb{X}^L$ as a $\mathcal{G}^{m}( \RR ^{d}) $-valued path 
that has constant speed $L$ when viewed through its log-chart. In that sense, 
$\mathbb{X}^L$ is a log-linear path. For $m=1$  we recover the linear paths in $\RR^d$, 
whose $k$th signature tensors form the Veronese variety
$\nu_k(\PP^{d-1}) = \mathcal{L}_{d,k,1}$.
\end{remark}

The signature of the geometric rough path $\mathbb{X}^L$ is simply the exponential
$\sigma(\mathbb{X}^L) = {\rm exp}(L)$.
The homogeneous component of degree $k$ in this tensor series
is the signature tensor $\sigma^{(k)}( \mathbb{X}^L)$.
Formally,  we define the {\em rough Veronese variety} $\mathcal{R}_{d,k,m}$ to be the subvariety of $\PP^{d^k-1}$ given by the
degree $k$ components of the exponential series  ${\rm exp}(L)$
where $L$ runs over ${\rm Lie}^m(\RR^d)$. 
 By construction, the inclusions (\ref{eq:thirdsuchchain}) hold, and we have
 $\mathcal{R}_{d,k,m} = \mathcal{U}_{d,k}$ whenever $m \geq k$.
 
\begin{example}[$d=2,m=2$] \rm
The Lie polynomials in ${\rm Lie}^2(\RR^2)$ are $\,\sigma = r e_1 + s e_2 + t [e_1,e_2]$,
where $r,s $ are parameters of degree 1 and $t $ is a parameter of degree 2.
 The rough Veronese $\mathcal{R}_{2,k,2}$ is a surface in
the space $\PP^{2^k-1}$ of $2 {\times} 2 {\times} \cdots {\times} 2$ tensors. It parametrizes the
 degree $k$ components of the exponential series ${\rm exp}(\sigma)$.
For a concrete computation set $k=3$ and revisit Examples \ref{ex:P23M2} and \ref{ex:twothree}.
We modify our earlier {\tt Macaulay2} code by setting $u=v=0$.
The output of the following fragment shows that $\mathcal{R}_{2,3,2}$ is a quartic surface
in a $\PP^5$ inside $\PP^7$:
\begin{verbatim}
ExpMap = map( QQ[r,s,t] , R ,
{r^2/2, r*s/2-t, s^2/2, r^3/6, r^2*s/6, 
  r^2*s/6-r*t/2, r*s^2/6, r*s^2/6-s*t/2, s^3/6,
  r, s, r*s/2+t, r^2*s/6+r*t/2, r*s^2/6+s*t/2 });
R232 = eliminate({s1,s2,s11,s12,s21,s22},kernel ExpMap)
codim R232, degree R232, betti mingens R232
\end{verbatim}
The ideal $R_{2,3,2}$ is generated by the two linear forms
$\sigma_{112} - 2 \sigma_{121} + \sigma_{211}$ and
$\sigma_{122} - 2 \sigma_{212} + \sigma_{221}$,
together with the six $2 \times 2$ minors of the $2 \times 4$ matrix
$ 
\begin{pmatrix}
\sigma_{111} & \! \sigma_{121} & \! \sigma_{212} & \sigma_{112} \\
\sigma_{121} & \! \sigma_{212} & \! \sigma_{222} & \sigma_{122 }
\end{pmatrix}
$.
\hfill $\diamondsuit$
\end{example}

\begin{example}[$m=2, k = 3$] \label{ex:XQ} \rm
Elements of ${\rm Lie}^2(\RR^d)$ can be written as $X + Q$,
where $X$ is in $\RR^d$ and $Q$ is a skew-symmetric $d \times d$ matrix.
The cubic component of ${\rm exp}(X+Q)$ equals
\begin{equation}
\label{eq:XQ} \begin{matrix}
\frac{1}{6}X^{\otimes 3}  \,+\,\frac{1}{2}\left( X\otimes Q + Q\otimes X\right) . \end{matrix}
\end{equation}
The rough Veronese $\mathcal{R}_{d,3,2}$ lives in $\PP^{d^3-1}$.
It consists of all $d {\times} d {\times} d$ tensors (\ref{eq:XQ}).
For $d=3$ we find that $\mathcal{R}_{3,3,2}$ 
has dimension $5$, degree $24$, and is cut out by
$81$ quadrics in a $\PP^{18}$ in $\PP^{26}$.
\hfill $\diamondsuit$
\end{example}

\begin{remark} \label{rem:LK1} \rm
If we replace the skew-symmetric $Q$ in (\ref{eq:XQ}) by a symmetric matrix,
then $\mathcal{R}_{d,3,2}$ turns into the
expected Brownian signature variety $\mathcal{B}_{d,3}$ in
Section~\ref{sec:expected}. This is no coincidence: 
$\mathbb{X}^L$ is a (deterministic) example of a Lie group valued (more precisely: $\mathcal{G}^m(\RR^d)$-valued) {\it L\'evy process}. These are stochastic processes with independent, stationary increments. Another example is Brownian motion $B=B(t,\omega)$ lifted to a rough path $\mathbb{B} = (1,B,\int B \otimes \circ \, {\rm d}B)$ via iterated Stratonovich stochastic integration. The L\'evy-Kintchine formula for such {\it L\'evy rough paths} \cite{FS} provides a unified formalism to study the expected signatures. This explains the resemblance of
the  rough Veronese and the expected Brownian signature variety.
\end{remark}

A detailed study of the rough Veronese
variety $\mathcal{R}_{d,k,m}$ was undertaken by Francesco Galuppi
in response to this article.
We might also consider a common generalization of
$\mathcal{R}_{d,k,m}$ and the piecewise linear signature variety
$\mathcal{L}_{d,k,m}$, namely the variety of signature tensors
 of piecewise log-linear paths in $\mathcal{G}^m(\RR^d)$.

\section{Dimension and Identifiability}
\label{sec:DimIde}

One of the motivations for this article is to develop a
framework for recovering paths $X$ from their signature tensors $\sigma^{(k)}(X)$.
The material in this section serves as a step in that direction, outlining 
an approach based on algebraic geometry to \cite[Problems 12 and 13]{HL}.

Solutions for this reconstruction problem when the path $X$ is piecewise linear 
were proposed by Lyons and Xu in \cite{LX1, LX2}.
This problem amounts to solving a system of polynomial equations in
the parameters $x_{ij}$,
but this was not made explicit in the articles \cite{LX1, LX2}. In earlier work,
Lyons and Sidorova \cite{LS} used signatures of paths for an application to
sound compression. 
They phrase the question of how to solve for $X$
given its signature as follows:

\smallskip

{\em  ``.... the problem of finding such a solution is equivalent to the problem of solving a system of polynomial equations. Provided that there exists a solution, this can be done for example by using 
Gr\"obner bases. In some sense, the remaining problem is to relate the level of truncation and the number of linear pieces in such a way that a solution exists. A disadvantage of this approach is a high complexity of the problem of solving polynomial systems''}  \cite[\S 3]{LS}.

\smallskip

State-of-the-art methods from applied algebraic geometry combine
Gr\"obner bases with other tools, e.g.~tensor methods and numerics.
 We are confident that these can be developed
 to the point where the polynomial systems for
 path reconstruction will be solved in practice.
 Particularly promising would be an approach based
 on the theory of {\em tensor decomposition} \cite{KB}.
 The proof of Proposition  \ref{prop:ab} suggests a
  Tucker decomposition with a fixed core tensor.
 
\subsection{Identifiability of the Universal Variety}

The universal variety  $\mathcal{U}_{d,k}$
comprises all $k$th order signature tensors of
deterministic paths in $\RR^d$. Its subvarieties
$\mathcal{L}_{d,k,m}$ and $\mathcal{P}_{d,k,m}$
represent piecewise linear paths with $m$ segments
and polynomial paths of degree $m$, respectively.
In what follows we examine whether the 
parameters of their natural parametrizations can be recovered from the tensors.

Theorem \ref{thm:lyndonmain}  implies that the dimension of
$\mathcal{U}_{d,k}$ is bounded above by
$\lambda_{d,k}-1$, where $\lambda_{d,k}$ is
the number of Lyndon words of length $\leq k$ over
the alphabet $\{1,2,\ldots,d\}$.
We compute these numbers using
Proposition \ref{prop:lyndon}, and we 
display some  values in Table \ref{tab:Udkdim}.
Note that each entry is considerably smaller
than $d^k-1$.

\begin{table}[h]
\begin{center} \begin{tabular}{ | l | l | l | l | l | l | l | l |   p{1.5cm} |} 
\hline  $d  \setminus k$ & 2  & 3 & 4 & 5  & 6 & 7 & 8 & 9  \\
\hline   2 & 2 & 4 & 7 & 13 & 22 & 40 & 70 & 126 \\
\hline  3 &  5 & 13 & 31 & 79 & 195 & 507 & 1317 & 3501 \\
\hline  4 &  9 & 29 & 89 & 293 & 963 & 3303 & 11463 & 40583	\\
\hline  5 &  14 & 54 & 204 & 828 & 3408 & 14568 & 63318 & 280318 \\
\hline 6  & 20 &  90 &  405 & 1959 & 9694 & 49684 & 259474 & 1379194 \\
\hline
 \end{tabular}
\vspace{-0.11in}
\end{center}
  \caption{\label{tab:Udkdim} The dimension
  of the universal variety $\mathcal{U}_{d,k}$ equals
  $\lambda_{d,k}-1$.
} \medskip
  \end{table}

We now prove that the upper bound for the dimension
of the universal variety is attained. We work
over an algebraically closed field $\KK$ of characteristic $0$.

\begin{theorem} \label{thm:EPidentifiability}
The projection from the Lie group $\,\mathcal{G}_{d,n} $ onto the affine
cone over the universal variety $\,\mathcal{U}_{d,n}$ is generically $\,n$-to-$1$. 
Hence, the projective variety $\mathcal{U}_{d,n}$ has the expected dimension $\lambda_{d,n}-1$.
\end{theorem}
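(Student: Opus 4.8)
The plan is to identify the map in question with the linear projection $\pi \colon T^n(\KK^d) \to (\KK^d)^{\otimes n}$ onto the top graded summand in (\ref{eq:Tn}), restricted to the Lie group $\mathcal{G}_{d,n} = \exp(\mathrm{Lie}^n(\KK^d))$. By construction of the ideal $U_{d,n}$ in (\ref{eq:Idk}) as an elimination ideal of $G_{d,n}$, the affine cone over $\mathcal{U}_{d,n}$ is exactly the Zariski closure of $\pi(\mathcal{G}_{d,n})$, and by Theorem \ref{thm:lyndonmain} the source $\mathcal{G}_{d,n}$ is irreducible of dimension $\lambda_{d,n}$. Hence it suffices to show that the generic fibre of $\pi|_{\mathcal{G}_{d,n}}$ has exactly $n$ points: generic finiteness then forces the cone to have dimension $\lambda_{d,n}$, so $\dim \mathcal{U}_{d,n} = \lambda_{d,n}-1$, which meets the upper bound from Theorem \ref{thm:lyndonmain} and is therefore the expected value.

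For the lower bound $\geq n$, I would use the dilation automorphisms $\delta_\zeta$ of $T^n(\KK^d)$ that scale the degree-$j$ component by $\zeta^j$. Each $\delta_\zeta$ is a graded algebra automorphism, so it preserves $\mathrm{Lie}^n(\KK^d)$ and hence $\mathcal{G}_{d,n}$, and it satisfies $\pi \circ \delta_\zeta = \zeta^n\, \pi$. When $\zeta^n=1$ this becomes $\pi\circ\delta_\zeta=\pi$, so the $n$ elements $\delta_\zeta(g)$ with $\zeta^n = 1$ all lie in one fibre; they are pairwise distinct whenever the degree-$1$ part of $g$ is nonzero, a Zariski-dense condition since the degree-$1$ coordinates $\sigma_i$ are among the Lyndon coordinates. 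Thus every fibre over a generic point has at least $n$ points.

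The heart of the argument will be the reverse bound. Fix a generic $g^0 = \exp(L^0)\in\mathcal{G}_{d,n}$ and suppose $g=\exp(L)$ satisfies $\pi(g)=\pi(g^0)$. Each shuffle linear form $\sigma_{I\shuffle J}$ with $|I|+|J|=n$ involves only degree-$n$ coordinates, so it takes the same value on $g$ and $g^0$; combined with Lemma \ref{lem:GroupLike} this gives
\[
 \sigma_I(g)\,\sigma_J(g) \;=\; \sigma_{I\shuffle J}(g) \;=\; \sigma_{I\shuffle J}(g^0) \;=\; \sigma_I(g^0)\,\sigma_J(g^0)\qquad\text{for all }|I|+|J|=n,\ |I|,|J|\ge 1 .
\]
Now choose a letter $a$ with $\sigma_a(g^0)\neq 0$ (generic) and run a downward induction on the tensor degree: specialising $I=(a)$ and letting $J$ run over all words of a fixed length, and using that the degree-$(n-1)$ part of $g$ is then a scalar multiple of that of $g^0$, one shows level by level that the degree-$k$ component of $g$ equals $\zeta^k$ times that of $g^0$, where $\zeta=\sigma_a(g)/\sigma_a(g^0)$. (One first checks $\sigma_a(g)\neq 0$: otherwise all degree-$(n-1)$ coordinates of $g^0$ would vanish, contradicting genericity, e.g. $\sigma_{1\cdots 1}(g^0)=\tfrac{1}{(n-1)!}\sigma_1(g^0)^{n-1}\neq 0$.) Comparing the expression for $\sigma_a$ obtained at the top and at the bottom of the induction forces $\zeta^n=1$, and together with $\pi(g)=\pi(g^0)$ for the top component this gives $g=\delta_\zeta(g^0)$. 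Hence the generic fibre is precisely the $\mu_n$-orbit of $g^0$, of size $n$.

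I expect the main technical obstacle to be the bookkeeping in this descent: tracking the powers of $\zeta$ as the degree drops from $n$ to $1$, and verifying at every step that the coordinate $\sigma_a$ and the shuffle forms employed remain nonzero on the whole fibre, not merely at $g^0$. None of this is deep, but it must be done with care, since the final identity $g=\delta_\zeta(g^0)$ — and with it the exact count $n$ — hinges on the exponents matching precisely. Once the fibre count is in hand, the dimension statement follows at once, and the word \emph{expected} is justified by comparison with the bound $\dim\mathcal{U}_{d,n}\le\lambda_{d,n}-1$ furnished by Theorem \ref{thm:lyndonmain}.
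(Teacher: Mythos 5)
Your argument is correct and essentially the same as the paper's: both use the shuffle relations of Lemma \ref{lem:GroupLike} to recover the lower levels from the top tensor by dividing by a nonzero letter coordinate $\sigma_a$, and both account for the $n$-fold ambiguity via the $n$th roots of unity acting by scaling the degree-$k$ component by $\zeta^k$ (your dilations $\delta_\zeta$ are exactly what the paper describes in its final sentence). The one small wrinkle is the bookkeeping you yourself flag: the descent actually yields $\sigma_J(g)=\zeta^{|J|-n}\sigma_J(g^0)$, which becomes $\zeta^{|J|}\sigma_J(g^0)$ only after the case $|J|=1$ forces $\zeta^n=1$.
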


\begin{proof}
Consider a general point $S$ in the irreducible variety
 $\mathcal{G}_{d,n}$. Its coordinates $\sigma_I$ are all non-zero.
The point $S$ is a sequence of tensors $S^{(1)}, S^{(2)},S^{(3)}, \ldots, S^{(n)}$
 whose formats are $d,d^2,d^3,\ldots,d^n$.
 The image of $S$  under the map from $\mathcal{G}_{d,n}$ onto
 the affine cone over $\mathcal{U}_{d,n}$ is the $n$th order tensor $T = S^{(n)}$.
We will prove that $T$ has precisely $n$ preimages 
with coordinates in $\KK$ under this map.

The symmetrization of $T$ is a symmetric tensor of rank $1$. It is the image
of the vector $S^{(1)} = (\sigma_1,\sigma_2,\ldots,\sigma_d)$ under the $n$th Veronese map.
This follows from the shuffle relation which states that 
$\sigma_{i_1} \sigma_{i_2} \cdots \sigma_{i_n}$  is equal to 
$\sigma_{i_1 \shuffle i_2 \shuffle \cdots \shuffle i_n}$
The latter is a linear form in the entries of 
the $n$th order tensor $T$, so it can be read off from $T$.
By taking $(i_1 , i_2 ,\ldots,i_n) = (1,1,\ldots,1)$, we see that there 
are $n$ choices for $\sigma_1$, namely the  distinct $n$th roots of  
$\sigma_{1 1 \cdots  1} = \frac{1}{n!} \sigma_1^n $.
The other $d-1$ entries of $S^{(1)}$ are found by setting
$\,\sigma_i =  \sigma_{ i \shuffle 1 \shuffle \cdots \shuffle 1}/\sigma_1^{n-1}\,$
for $i=2,3,\ldots,d$.

We can now recover the entries of $S^{(n-1)}$ uniquely.
This is done by setting  $\,\sigma_I = \sigma_{I \shuffle i} / \sigma_i \,$
for every word $I$ of length $n-1$.
 This works because each $\sigma_i$ is non-zero by our assumption on $S$.
At this point, we  proceed by downward
induction on the order of the tensor, recovering next
$\,S^{(n-2)}$ from  $S^{(n-1)}$,
 then $\, S^{(n-3)}$ from $S^{(n-2)}$,  etc. Finally, we recover $S^{(2)}$ from $S^{(3)}$.

This argument shows that $T$ has precisely $n$ preimages in $\mathcal{G}_{d,n}$.
For any $n$th root of unity $\eta$, we can multiply the degree $k$ part of the original 
point  $S$ by $\eta^k$ and obtain another preimage $S'$.
 This proves that our map has degree $n$.
In particular, it preserves the dimension.
The second assertion follows because we know from Theorem \ref{thm:lyndonmain} that 
$\dim( \mathcal{G}_{d,n}) = \lambda_{d,n}$.
Hence the projective variety $\mathcal{U}_{d,n}$
has dimension $\lambda_{d,n}-1$ in $\PP^{d^n-1}$.
\end{proof}

A key step in the proof was to extract the 
$n$th roots of a scalar in $\KK$.
If $\KK = \RR$ then the number of such roots
is $1$ if $n$ is odd and $2$ if $n$ is even. This implies:

\begin{corollary}
The map from the step-$n$ free nilpotent Lie group $\,\mathcal{G}^n(\RR^d)\,$
to the cone  over the real universal variety $\,\mathcal{U}^\RR_{d,n}$ is generically $\,1$-to-$1$
if $n$ is odd, and it is $2$-to-$1$ if $n$ is even.
\end{corollary}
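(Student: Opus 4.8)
The plan is to read off the answer from Theorem~\ref{thm:EPidentifiability} and its proof, which already identify the complete fiber of the complex projection $\mathcal{G}_{d,n}\to (\text{affine cone over }\mathcal{U}_{d,n})$, $S\mapsto S^{(n)}$, over a general point: it consists of exactly the $n$ distinct tensor series $\delta_\eta(S)$, where $S$ is one chosen preimage and $\delta_\eta$ denotes the \emph{grading dilation} that multiplies the degree-$k$ component of a series by $\eta^k$, as $\eta$ ranges over the $n$th roots of unity. Since the real universal variety $\mathcal{U}^\RR_{d,n}$ is the real locus of $U_{d,n}$ and $\mathcal{G}^n(\RR^d)$ is the real locus of $G_{d,n}$ (Lemma~\ref{lem:GroupLike}), the corollary reduces to counting which of these $n$ complex preimages have real coordinates.

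First I would check that, for $\eta$ real, the point $\delta_\eta(S)$ genuinely lies in $\mathcal{G}^n(\RR^d)$. The dilation $\delta_\eta$ is an algebra automorphism of the truncated tensor algebra, so it preserves the defining relations $\sigma_I\sigma_J-\sigma_{I\,\shuffle\,J}$ of $\mathcal{G}_{d,n}$ (each side is multiplied by $\eta^{|I|+|J|}$), and it sends real coordinates to real coordinates; hence for $\eta\in\RR$ the point $\delta_\eta(S)$ is a real point of $\mathcal{G}_{d,n}$, i.e.\ an element of $\mathcal{G}^n(\RR^d)$, and it lies in the same fiber because $(\delta_\eta S)^{(n)}=\eta^n S^{(n)}=S^{(n)}$. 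Conversely, any preimage of $S^{(n)}$ with real coordinates is by definition a real point of $\mathcal{G}_{d,n}$, hence belongs to $\mathcal{G}^n(\RR^d)$. Thus the fiber of the real map over $S^{(n)}$ is exactly $\{\,\delta_\eta(S):\eta^n=1,\ \eta\in\RR\,\}$, and these points are pairwise distinct since $\delta_\eta(S)=\delta_{\eta'}(S)$ already forces $\eta=\eta'$ on the nonzero degree-$1$ part. The set of real $n$th roots of unity is $\{1\}$ when $n$ is odd and $\{1,-1\}$ when $n$ is even, giving the asserted generic fiber sizes $1$ and $2$.

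The only step requiring care — and the real point of the argument beyond bookkeeping — is the passage from ``general complex point'' to ``general real point''. Here I would use that ${\rm exp}\colon {\rm Lie}^n(\CC^d)\to\mathcal{G}_{d,n}$ is an isomorphism of affine varieties, with polynomial inverse ${\rm log}$, both defined over $\QQ$. The proper subvariety of $\mathcal{G}_{d,n}$ outside which the reconstruction argument of Theorem~\ref{thm:EPidentifiability} applies (and outside which all coordinates $\sigma_I$ are nonzero) therefore pulls back under ${\rm exp}$ to a proper subvariety $Z\subsetneq{\rm Lie}^n(\CC^d)\cong\CC^{\lambda_{d,n}}$ defined over $\QQ$, whose real locus $Z(\RR)$ is a nowhere-dense subset of ${\rm Lie}^n(\RR^d)\cong\RR^{\lambda_{d,n}}$. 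Pushing the complement forward by ${\rm exp}$ yields a Zariski-dense subset of $\mathcal{G}^n(\RR^d)$ on which all of the above holds, so the real projection $\mathcal{G}^n(\RR^d)\to(\text{affine cone over }\mathcal{U}^\RR_{d,n})$ is generically $1$-to-$1$ when $n$ is odd and generically $2$-to-$1$ when $n$ is even.
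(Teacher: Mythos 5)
Your proof is correct and follows the same route as the paper, which simply observes that the only multivalued step in the reconstruction of Theorem~\ref{thm:EPidentifiability} is extracting an $n$th root of a positive real scalar, and such a scalar has one real $n$th root if $n$ is odd and two if $n$ is even. Your more explicit account via the grading dilations $\delta_\eta$ is sound; the only point you leave implicit is the converse direction that $\delta_\eta(S)\in\mathcal{G}^n(\RR^d)$, with $S$ real and $S^{(1)}\neq 0$, forces $\eta\in\RR$, because $\eta\,S^{(1)}$ must be a real vector.
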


Recall from Theorem \ref{thm:lyndonmain}  that the
signatures $\sigma_I$, where $I$ runs over all Lyndon words of length $n$,
can be used as coordinates for $\mathcal{G}^n(\RR^d)$.
In light of Theorem \ref{thm:EPidentifiability}, we might use these Lyndon 
coordinates to also express the equations defining  subvarieties of $\,\mathcal{U}_{d,n}$.

\begin{proposition} \label{prop:allnonzero}
Fix a sufficiently smooth path $X : [0,1] \rightarrow \RR^d$ that is not a loop, i.e.~$X(1) \not= X(0)$.
For any word $I$ and  odd integer $n > |I|$,
the coordinate $\sigma_I$ of the signature of $X$
   can be recovered uniquely
from the $n$th signature tensor $\sigma^{(n)}(X)$. The same statement holds true for any rough path (cf. Section \ref{sec:rough}), provided its first signature tensor is non-zero. 
\end{proposition}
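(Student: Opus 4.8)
The plan is to run the reconstruction procedure from the proof of Theorem~\ref{thm:EPidentifiability}, checking that the weaker hypothesis $X(1)\neq X(0)$ already suffices in place of genericity. First I would record the structural input: for a sufficiently smooth path the step-$n$ signature $\sigma^{\leq n}(X)$ lies in the free nilpotent Lie group $\mathcal{G}^n(\RR^d)$ by Theorem~\ref{thm:chen}, hence by Lemma~\ref{lem:GroupLike} it satisfies every shuffle identity $\sigma_{A\,\shuffle\,B}=\sigma_A\,\sigma_B$ whenever $|A|+|B|\leq n$. Since $X$ is not a loop, $\sigma^{(1)}(X)=X(1)-X(0)\neq 0$, so some coordinate $\sigma_{i_0}$ is non-zero; fix this index $i_0$ once and for all.

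The second step extracts $\sigma_{i_0}$ from $T:=\sigma^{(n)}(X)$. Iterating the shuffle identity gives $\sigma_{i_0}^{\,n}=n!\,\sigma_{i_0 i_0\cdots i_0}=n!\,T_{i_0 i_0\cdots i_0}$, where $i_0 i_0\cdots i_0$ is the word of length $n$; this is a single entry of $T$ up to the scalar $n!$. Because $n$ is odd, the map $t\mapsto t^n$ is a bijection of $\RR$, so $\sigma_{i_0}$, and therefore each power $\sigma_{i_0}^{\,r}$, is uniquely determined by $T$. This is the only point where the oddness of $n$ and the non-loop condition are actually used.

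The third step recovers an arbitrary coordinate $\sigma_I$ with $r:=n-|I|\geq 1$. Let $I^{(r)}$ denote the element obtained by shuffling the word $I$ with $r$ copies of the one-letter word $i_0$; since the shuffle product is associative and commutative, iterating Lemma~\ref{lem:GroupLike} gives $\sigma_{I^{(r)}}=\sigma_I\,\sigma_{i_0}^{\,r}$. On the other hand, every word occurring in $I^{(r)}$ has length $|I|+r=n$, so $\sigma_{I^{(r)}}$ is a $\QQ$-linear combination of the coordinates $\sigma_J$ with $|J|=n$ only, hence can be read off directly from $T=\sigma^{(n)}(X)$. Dividing by the nonzero, already computed quantity $\sigma_{i_0}^{\,r}$ yields $\sigma_I$, and since every step above produced a uniquely determined value, $\sigma_I$ is recovered uniquely. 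This proves the deterministic assertion.

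For the rough-path version the three steps apply verbatim: the signature of a geometric rough path again satisfies all shuffle identities (this is part of the content of Lyons' extension theorem, cf.\ Section~\ref{sec:rough}), and the hypothesis that its first signature tensor is non-zero supplies the index $i_0$. I expect the only delicate points to be mild ones — the bookkeeping in Step~3, namely confirming that no coordinate of order below $n$ enters $\sigma_{I^{(r)}}$, which is exactly the length count $|I|+r=n$; and making precise that ``sufficiently smooth'' (respectively, being a geometric rough path) is the regularity under which the shuffle identities, and hence the entire argument, remain valid.
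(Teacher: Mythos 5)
Your proof is correct and follows the same basic strategy as the paper's (shuffle relations $\sigma_{A\,\shuffle\,B}=\sigma_A\sigma_B$ plus uniqueness of the real $n$th root for odd $n$). The paper simply appeals to the downward-induction algorithm from the proof of Theorem~\ref{thm:EPidentifiability}, preceded by a linear change of coordinates on $\RR^d$ so that every $\sigma_i$ is non-zero. Your version is a mild but genuine streamlining: you fix a single non-zero coordinate $\sigma_{i_0}$ (no coordinate change needed), and instead of recovering level $n-1$, then $n-2$, and so on, you recover any $\sigma_I$ in one step by shuffling $I$ with $r=n-|I|$ copies of $i_0$ and dividing by $\sigma_{i_0}^r$, using that $\sigma_{I\,\shuffle\,i_0^{\shuffle r}}$ is a linear form in the level-$n$ entries alone. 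This buys you a self-contained one-pass reconstruction and makes explicit that the only hypotheses actually used are $\sigma^{(1)}(X)\neq 0$ and the oddness of $n$; the paper's route reuses a previously stated algorithm at the cost of the coordinate change.
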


\begin{proof}
The hypothesis ensures that $\sigma_i \not=0 $ for $i=1,2,\ldots,d$,
possibly after a linear change of coordinates in $\RR^d$.
The algorithm in the proof of  Theorem \ref{thm:EPidentifiability} shows
that there is a unique element in the Lie group $\mathcal{G}^n(\RR^d)$ which maps to 
the signature tensor $\sigma^{(n)}(X)$.
\end{proof}

The hypothesis in Proposition \ref{prop:allnonzero} cannot be removed.
It fails for loops.

\begin{example} \rm Fix a parameter $c \in \RR$.
Using Chen's formula (\ref{eq:PL1}),
 we can construct piecewise linear paths $X$
in the plane $\RR^2$ whose level-$3$ signature is 
$$ \exp \bigl(\, c [e_1, e_2] \,+\, [e_1, [e_1, e_2]]\, \bigr) 
\,=\, \exp (L)  \,\,\in \,\,\mathcal{G}^3(\RR^2) . $$
This is the signature of the order-$3$ rough path $\mathbb{X}^L$ with constant speed $L$.
The third signature is $\,\sigma^{(3)}(X) = [e_1, [e_1, e_2]]$.
For the Lyndon word $I = 12$, we have $\sigma_I = c$.
This cannot be recovered  from $\sigma^{(3)}(X)$. Note that 
$\sigma^{(i)}(X)  = 0$ for $i=1,2$.
\hfill $\diamondsuit $
\end{example} 

\begin{remark} \rm
The rough Veronese variety $\mathcal{R}_{d,k,m}$ in Subsection \ref{sec:rough}
is identifiable in the same $k$-to-$1$ sense as the universal variety.
Given a generic point $\sigma^{(k)}(\mathbb{X}^L)$ in $\mathcal{R}_{d,k,m}$,
we recover $\sigma(\mathbb{X}^L) = {\rm exp}(L)$ via Theorem~\ref{thm:EPidentifiability},
and from this we compute $L$ by taking the logarithm.
\end{remark}

\subsection{Recovering Paths from Signature Tensors}

Our main reconstruction problem is as follows. Given
a real $d {\times} d {\times} \cdots {\times} d$ tensor $S$,
we seek to find a piecewise linear or polynomial path $X$
of small complexity $m$ such that $\,S = \sigma^{(k)}(X)$.
Of course, a necessary condition is that $S$ lies in $\mathcal{L}^{\rm im}_{d,k,m}$
resp.~$\mathcal{P}^{\rm im}_{d,k,m}$. Assuming this to be the case,
we hope to express each parameter $x_{ij}$ as a function of the entries $\sigma_I$ of the tensor $S$.

In this subsection we discuss a variant of that reconstruction
problem, namely recovery up to scaling. We replace the signature images above
with the signature varieties  $\mathcal{L}_{d,k,m}$ and $\mathcal{P}_{d,k,m}$.
These are the closed images of our rational maps
$\, \PP^{md-1} \dashrightarrow \PP^{d^k-1}\,,\, \, X \,\mapsto \, \sigma^{(k)}(X)$.
We seek rational formulas that invert these maps on the signature varieties.
As before, the points in the projective spaces have their coordinates in
a field $\KK$ that is algebraically closed.

The corresponding polynomial map 
$\, \KK^{md} \rightarrow \KK^{d^k}\,$ does not admit a rational inverse
on the affine cones of our signature varieties. The reason is that
the signature tensor $S = \sigma^{(k)}(X)$ remains unchanged
 if we multiply the parameter matrix $X = (x_{ij})$ by a $k$th root of unity.

 \begin{remark}[$d=m=2, k=3$] \rm Consider the eight equations seen in Example \ref{ex:d2k3},
 where the $S = (\sigma_{ijk})$ obey the quadratic constraints in $P_{2,3,2}$.
 We seek to solve the eight equations for $x_{11},x_{12},x_{21},x_{22}$.
 To do this, we must extract a third root, e.g.~as in $x_{11} + x_{21} = (6 \sigma_{111})^{1/3}$.
These roots of unity disappear when we work in projective space.
In other words, if we regard both $X$ and $S$ up to scale, then we can compute
 formulas that express $X$ as a rational function in the points $S$
of  the signature variety $\mathcal{P}_{2,3,2}$.  Gr\"obner bases will find such formulas when they exist.
We demonstrate this for the simplest piecewise linear paths.
 \end{remark}
 
\begin{example}[$d=m=2, k=3$]  \label{ex:recovery1} \rm
The reconstruction of two-segment paths in the plane from their
$2 {\times} 2 {\times} 2$ signature tensors is solved by running the following {\tt Macaulay2} code:
\begin{small}
\begin{verbatim}
R = QQ[x11,x12,x21,x22,s111,s112,s121,s122,s211,s212,s221,s222,
     MonomialOrder => Lex];
M = matrix { 
 { s111, 1/6*(x11+x21)^3                                            },
 { s112, 1/6*x11^2*x12+1/2*x11^2*x22+1/2*x11*x21*x22+1/6*x21^2*x22  },
 { s121, 1/6*x11^2*x12+1/2*x11*x12*x21+1/2*x11*x21*x22+1/6*x21^2*x22},
 { s122, 1/6*x11*x12^2+1/2*x11*x12*x22+1/2*x11*x22^2+1/6*x21*x22^2  },
 { s211, 1/6*x11^2*x12+1/2*x11*x12*x21+1/2*x12*x21^2+1/6*x21^2*x22  },
 { s212, 1/6*x11*x12^2+1/2*x11*x12*x22+1/2*x12*x21*x22+1/6*x21*x22^2},
 { s221, 1/6*x11*x12^2+1/2*x12^2*x21+1/2*x12*x21*x22+1/6*x21*x22^2  },
 { s222, 1/6*(x12+x22)^3                                           }};
I = saturate( minors(2,M) , ideal(x11+x21,x12+x22) );
toString gens gb I
\end{verbatim}
\end{small}

The ideal {\tt I} represents the graph of the map 
 $\sigma^{(3)} : \PP^3 \dashrightarrow \PP^7$
 whose image is the threefold $\mathcal{L}_{2,3,2}$.
 The output of the {\tt Macaulay2} code is the reduced
 Gr\"obner basis for {\tt I}. This includes nine quadrics that
 generate $ L_{2,3,2}$. In the Gr\"obner basis we also see the following polynomials:
  \begin{equation}
  \label{eq:recovery1}  \begin{matrix}
 (\sigma_{122}- \sigma_{212}) \cdot  \underline{x_{21}} - (\sigma_{121} -\sigma_{211}) \cdot x_{22}, \quad \\
   (\sigma_{121}-\sigma_{211}) \cdot \underline{x_{12}} -  (\sigma_{212}-\sigma_{221}) \cdot x_{21}, \quad \\
\quad       3 \sigma_{211} \cdot \underline{x_{11}}  - 3 \sigma_{111} \cdot x_{12}
 + (\sigma_{211} - \sigma_{121}) \cdot x_{21}.
\end{matrix}
\end{equation}
This is a rational formula for $X \in \PP^3$ in terms of 
the point $\, \sigma^{(3)}(X)$ in $\,\mathcal{L}_{2,3,2} \subset \PP^7$.

The same method works for quadratic paths in the plane, namely
we can invert the
map $\sigma^{(3)} : \PP^3 \dashrightarrow \mathcal{P}_{2,3,2}\subset \PP^7$
by running {\tt Macaulay2}.
In the lexicographic Gr\"obner basis we find
\begin{equation}
\label{eq:recovery2}
\begin{matrix} 
3 ( \sigma_{112}- \sigma_{211}+\sigma_{212}) \cdot \underline{x_{21}}
\,+\, (\sigma_{112}+ \sigma_{121}-5 \sigma_{211} ) \cdot x_{22}, \\
(\sigma_{122}-2 \sigma_{212}\,+\,\sigma_{221}) \cdot \underline{x_{12}}
+ (\sigma_{112}-2 \sigma_{121}+ \sigma_{211}) \cdot x_{22} ,\\
5 \sigma_{121} \cdot \underline{x_{11}}
\,-\, 5 \sigma_{111} \cdot x_{21}
\,-\, (\sigma_{112}-5 \sigma_{121}- \sigma_{211}) \cdot x_{12}
\,-\,5 \sigma_{111} \cdot x_{22}.
\end{matrix}
\end{equation}
This is a  formula for $X \in \PP^3$ in terms of 
the point $\sigma^{(3)}(X)$  in $\,\mathcal{P}_{2,3,2} \subset \PP^7$.
\hfill $\diamondsuit $
\end{example}

Counting parameters gives an upper bound on the dimension
of our varieties:
\begin{equation}
\label{eq:twoineqs}
\! {\rm dim}( \mathcal{L}_{d,k,m}) \leq 
{\rm min} \{ \lambda_{d,k}-1, dm-1 \}
\,\,{\rm and} \,\,
 {\rm dim}( \mathcal{P}_{d,k,m}) \,\leq \,
{\rm min} \{ \lambda_{d,k}-1, dm-1 \}.
\end{equation}
If the dimension equals $dm-1$ then the variety is {\em algebraically identifiable}.
This means that, for some positive integer $r$, the map from $d \times m$ matrices that represent
paths $X$ to their  signature tensors $\sigma^{(k)}(X)$ is $r$-to-$1$ even up to scaling.
If $r=1$ then the map is {\em birational}, and the variety is {\em rationally identifiable}.
This  best-case scenario  happened in Example~\ref{ex:recovery1}.

In general, our signature varieties can be {\em non-identifiable},
i.e.~the inequalities (\ref{eq:twoineqs}) are strict.
This happens for the matrix case ($k=2$) studied in Section \ref{sec:vsm}.
We saw in Theorem \ref{thm:matricesmain} that 
$\mathcal{L}_{d,2,m} = \mathcal{P}_{d,2,m}$ has dimension
$\,dm - \binom{m}{2}-1$. For $2 \leq m < d $, this is strictly less than
the minimum of $dm-1$ and $\lambda_{2,d} -1  = (d+2)(d-1)/2$
 on the right hand side of (\ref{eq:twoineqs}).
In the non-identifiable case, the fibers of the map $X \mapsto \sigma^{(k)}(X)$
are positive-dimensional. It is interesting to study the geometry of such families of 
 paths with fixed signature tensor.

\begin{example}[$k=m=2$] \rm
We fix the $d \times d$ signature matrix $S$ given by
a two-segment path $X$ or a quadratic path $X$.
Suppose the path runs from ${\bf 0} = (0,0,\ldots,0)$
to ${\bf 1} = (1,1,\ldots,1)$.
For a two-segment path, we write
$X_1 = (a_1,a_2,\ldots,a_d)$ for the first step
and hence $X_2 = (1-a_1,1-a_2,\ldots,1-a_d)$
for the second step.  Then
$\sigma^{(1)}(X) ={\bf 1}$
and $S = \sigma^{(2)}(X) = \frac{1}{2} ({\bf 1} \cdot {\bf 1}^T + Q)$,
where $Q$ is the skew-symmetric matrix whose entries $q_{ij}$
record the L\'evy areas 
\begin{equation}
\label{eq:levyv}
 q_{ij} \,=\, a_i  - a_j \qquad \hbox{for} \,\,\, 1 \leq i,j \leq d. 
 \end{equation}
From this we see that $\mathcal{L}_{d,2,2}$ is not identifiable.
The fiber over $S$ is a curve, representing all paths
obtained from $X$ by adding a multiple of ${\bf 1} = (1,1,\ldots,1)$ to $X_1= (a_1,a_2,\ldots,a_d)$.
For a quadratic path, we write the $i$th coordinate as
$3a_i t + (1-3a_i)t^2$. Then we get the same signature
matrix as above. In particular, (\ref{eq:levyv}) holds
and the fiber is the same curve.
\hfill $\diamondsuit$
\end{example}

\begin{example}[$k=2,m=3$] \rm
Let $d \geq 5$ and consider three-segment paths $X$ that go from
${\bf 0}$ to $(a_1,a_2,\ldots,a_d)$, then
to $(b_1,b_2,\ldots,b_d)$, and finally to ${\bf 1}$. We
fix the signature matrix $S = \sigma^{(2)}(X) = \frac{1}{2} ({\bf 1} \cdot {\bf 1}^T + Q)$
as above. The L\'evy areas of the coordinate projections are
\begin{equation}
\label{eq:levyv2}
 q_{ij} \,= (a_i-1) b_j - (a_j-1) b_i  \,\,\, \hbox{for} \,\,\, 1 \leq i,j \leq d. 
\end{equation}
The $q_{ij}$ are now fixed.
 The fiber over $S$, consisting of
all three-segment paths with L\'evy areas $q_{ij}$, is a threefold.
Inside the $4$-dimensional affine subspace of $\RR^{2d}$ that is cut out by
$$
q_{ij} b_k - q_{ik} b_j + q_{jk} b_i \, = \,
q_{ij} (a_k-1) - q_{ik} (a_j-1) + q_{jk} (a_i-1) \, = \, 0 
\,\,\, \hbox{for} \,\, 1 \leq i < j < k  \leq d, $$
our threefold is the hypersurface defined by any
 \underbar{one} of the equations in (\ref{eq:levyv2}).
\hfill $\diamondsuit$
\end{example}

We now come to the general case $k \geq 3$.
We present two conjectures on identifiability.

\begin{conjecture} 
\label{conj:big1}
Let $k \geq 3$. Then equality holds in (\ref{eq:twoineqs}) for all
integers $d,m \geq 2$.
\end{conjecture}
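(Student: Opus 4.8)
\emph{Towards a proof.} The upper bounds in (\ref{eq:twoineqs}) are already available: the term $dm-1$ comes from counting parameters, and the term $\lambda_{d,k}-1$ from the containment of both varieties in $\mathcal{U}_{d,k}$, which has dimension $\lambda_{d,k}-1$ by Theorem~\ref{thm:EPidentifiability}. So the conjecture asserts that the differential of the rational parametrization $X\mapsto\sigma^{(k)}(X)$ has maximal rank $\min\{dm,\lambda_{d,k}\}$ at a generic $d\times m$ matrix $X$. The plan is to split this into the \emph{subgeneric regime} $dm\le\lambda_{d,k}$, where one must show the differential is injective on the affine cone (algebraic identifiability, and conjecturally the rational identifiability witnessed in small cases by explicit inversion formulas as in Example~\ref{ex:recovery1}), and the \emph{overabundant regime} $dm\ge\lambda_{d,k}$, where one must show the differential surjects onto the tangent space of $\mathcal{U}_{d,k}$. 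By the monotone chains of Theorem~\ref{thm:chains} the latter is equivalent to saying that $\mathcal{L}_{d,k,m}=\mathcal{U}_{d,k}$ and $\mathcal{P}_{d,k,m}=\mathcal{U}_{d,k}$ already when $m=\lceil\lambda_{d,k}/d\rceil$, i.e.\ that the stabilization indices $M,M'$ of Theorem~\ref{thm:chains} both equal $\lceil\lambda_{d,k}/d\rceil$.

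For the piecewise linear varieties I would use Chen's identity (\ref{eq:PL1}): $\mathcal{L}_{d,k,m}$ is the image, under the degree-$k$ projection $\pi$ of $\mathcal{G}_{d,k}$ onto the cone over $\mathcal{U}_{d,k}$, of the map $\Phi_m\colon(X_1,\dots,X_m)\mapsto\exp(X_1)\otimes\cdots\otimes\exp(X_m)$. A generic such product has nonzero first signature tensor, and over the open locus where that tensor does not vanish the fibers of $\pi$ are finite by the reconstruction algorithm in the proof of Theorem~\ref{thm:EPidentifiability}; hence $\dim\mathcal{L}_{d,k,m}=\dim\overline{{\rm im}\,\Phi_m}$. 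Transporting through $\log$, this equals the dimension of the Baker--Campbell--Hausdorff image $V_m:=\overline{\{\,{\rm BCH}_{\le k}(X_1,\dots,X_m):X_i\in{\rm Lie}^1(\KK^d)\,\}}\subseteq{\rm Lie}^k(\KK^d)$. Differentiating ${\rm BCH}$, the tangent space of $V_m$ at a generic point is a sum $\sum_{i=1}^m A_i({\rm Lie}^1)$ of $d$-dimensional subspaces, where each $A_i$ is a unipotent grading-degree-raising automorphism of ${\rm Lie}^k$ (a product of operators ${\rm Ad}_{\exp X_j}$ with an invertible power series in ${\rm ad}_{X_i}$), so that $A_i({\rm Lie}^1)$ is the graph of a linear map ${\rm Lie}^1\to{\rm Lie}^{\ge 2}$. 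The conjecture for $\mathcal{L}$ is thus reduced to: for generic $X_i$ these $m$ $d$-planes are in general position, i.e.\ span a space of dimension $\min\{dm,\lambda_{d,k}\}$. The case $m=2$ is provable by hand: from the degree $1$, $2$ and $3$ components of ${\rm BCH}(X_1,X_2)$ one reads off $X_1+X_2$, then $[X_1,X_2]$, then $X_1-X_2$ via injectivity of ${\rm ad}_{[X_1,X_2]}\colon{\rm Lie}_1\to{\rm Lie}_3$ (the centralizer of a nonzero homogeneous element of a free Lie algebra is one-dimensional), so $\dim V_2=\min\{2d,\lambda_{d,k}\}$ and $\mathcal{L}_{d,k,2}$ is even rationally identifiable for every $k\ge 3$. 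One then induces on $m$ via the recursion $V_{m+1}=\overline{{\rm BCH}(V_m,{\rm Lie}^1)}$, checking that at a generic $Z\in V_m$ the $d$ new directions $D_Y{\rm BCH}(Z,Y)|_{Y=0}({\rm Lie}^1)$ meet $T_ZV_m$ only in the overlap unavoidably forced by $\dim{\rm Lie}^k=\lambda_{d,k}$.

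For the polynomial varieties there is no analogue of Chen's factorization, so I would argue directly with the Jacobian of $X\mapsto(X\otimes\cdots\otimes X)\,\sigma^{(k)}(C_{\rm mono})$, again using the chain $\mathcal{P}_{d,k,m}\subseteq\mathcal{P}_{d,k,m+1}\subseteq\mathcal{U}_{d,k}$ of Theorem~\ref{thm:chains}: it suffices to prove that each inclusion raises the dimension by exactly $d$ until $\lambda_{d,k}-1$ is attained, the upper bound being the parameter count and the lower bound being the statement that the $d$ partial derivatives $\partial\sigma^{(k)}(X)/\partial x_{i,m+1}$ at a generic degree-$m$ path are independent modulo $T_X\mathcal{P}_{d,k,m}$ whenever $dm<\lambda_{d,k}$. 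This is once more a transversality assertion of exactly the same shape as in the piecewise linear case.

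The crux in both cases is this general-position (non-defectivity) statement: that the families of subspaces cut out by the first partials are as large as possible for generic parameters, uniformly in $d,k,m$. This is the same species of problem as proving that a secant or tangential variety attains its expected dimension, and it is unlikely to succumb to soft arguments. The routes I would pursue are: (a) an explicit favorable specialization of the parameters $X_i$ --- for instance supported on a nested flag of coordinate subspaces, or a monomial/tropical degeneration --- at which the relevant Jacobian visibly has maximal rank, whence the generic statement follows by semicontinuity; (b) a limit/degeneration argument inside the chains of Theorem~\ref{thm:chains}; or (c) a representation-theoretic analysis tracking the image of the differential against the ${\rm GL}(d,\KK)$-isotypic (Lyndon-indexed) decomposition of ${\rm Lie}^k(\KK^d)$. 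The computations recorded in the tables confirm the conjecture in every case checked but supply no uniform mechanism, which is why I expect step (a) --- finding the right degeneration --- to be the main obstacle.
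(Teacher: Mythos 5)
This statement is an open conjecture; the paper does not prove it. What the paper offers instead is a structural reduction (Proposition~\ref{prop:ab}, via uniqueness of the Tucker decomposition with fixed core tensor, reducing to $k=3$ and $d=m$), a computational verification of algebraic identifiability up to $m=15$ and rational identifiability up to $m=5$ (Lemma~\ref{lem:computational}), and a pointer (Remark~6.12) towards Chen's formula as an alternative inductive route. Your proposal is, accordingly, a ``towards a proof'' sketch and you are candid that it is not complete; the assessment below compares it against the paper's partial results and checks the pieces you do supply.

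The framework you set up is sound and, in one place, goes genuinely beyond the paper. Your $m=2$ argument is correct and clean: from $\sigma^{(k)}(X)$, $k\ge 3$, Theorem~\ref{thm:EPidentifiability} reconstructs the full step-$k$ signature up to a $k$th root of unity (irrelevant projectively), $\log$ converts $\exp(X_1)\otimes\exp(X_2)$ to the BCH series whose degree $1,2,3$ components are $X_1+X_2$, $\tfrac12[X_1,X_2]$ and $\tfrac1{12}[X_1-X_2,[X_1,X_2]]$, and the centralizer of a nonzero homogeneous element of a free Lie algebra being one-dimensional (so $\mathrm{ad}_{[X_1,X_2]}\colon\mathrm{Lie}_1\to\mathrm{Lie}_3$ is injective when $[X_1,X_2]\neq 0$) lets you recover $X_1-X_2$ and hence $X_1,X_2$ rationally. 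That establishes rational identifiability of $\mathcal{L}_{d,k,2}$ for every $d\ge 2$, $k\ge 3$ without computation --- a statement the paper only checks numerically. Your reduction to $k=3$ also matches the paper's first step; what you omit is the second reduction of Proposition~\ref{prop:ab} (from $(d,m)$ with $d\ge m$ to $d=m$ via the Tucker form $\sigma^{(3)}(X)=[[C_\bullet;X,X,X]]$ with invertible core), which is what makes the residual open problem finite-dimensional and uniform in $d$.

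Where both you and the paper stop is the same place: a general-position (non-defectivity) statement for the images of the differentials as $m$ grows. In your language, that the $m$ subspaces $A_i(\mathrm{Lie}^1)$ obtained by differentiating BCH span $\min\{dm,\lambda_{d,k}\}$ dimensions generically; in the paper's language, that the $m^2\times m^3$ Jacobian of $X\mapsto[[C_\bullet;X,X,X]]$ has full rank at a generic $X$. Your proposed induction $V_{m+1}=\overline{\mathrm{BCH}(V_m,\mathrm{Lie}^1)}$ is plausible but, as you acknowledge, the inductive step that the $d$ new tangent directions at a generic point of $V_m$ meet $T_ZV_m$ only in the unavoidable overlap is precisely as hard as the original conjecture; there is no soft argument on offer, and the degeneration/representation-theoretic routes you list under (a)--(c) are named but not executed. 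So the proposal does not close the conjecture, and the gap is genuine --- but it is the same gap the paper leaves open, and your BCH/centralizer treatment of $m=2$ and the log-side transversality reformulation are legitimate complements to the Tucker-based reduction and the numerical evidence the paper records.
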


If this is true then we know the thresholds for
stabilization in Theorem \ref{thm:chains}.

\begin{corollary}
\label{cor:MMM}
If Conjecture \ref{conj:big1} holds then
the constants in Theorem \ref{thm:chains} are
\begin{equation}
\label{eq:MMM}
 M \,\, = \,\,  M' \,\, = \,\, \biggl\lceil \frac{\lambda_{d,k}}{d} \biggr\rceil. 
 \end{equation}
 Furthermore,
 for all $\,m \geq M$,
we have $\,\mathcal{P}_{d,k,m} \,=\, \mathcal{L}_{d,k,m}$, and this variety equals
 the universal variety $\, \mathcal{U}_{d,k}$.
\end{corollary}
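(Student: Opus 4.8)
The plan is to deduce everything from the dimension count of Theorem~\ref{thm:EPidentifiability}, the chain of inclusions in Theorem~\ref{thm:chains}, and the elementary fact that an irreducible projective variety contained in another irreducible variety of the same dimension must coincide with it. Set $M_0 := \lceil \lambda_{d,k}/d \rceil$. First I would assemble the three ingredients that are already available: (i) for every $m$, the varieties $\mathcal{L}_{d,k,m}$ and $\mathcal{P}_{d,k,m}$ are irreducible subvarieties of $\mathcal{U}_{d,k}$ --- indeed piecewise linear and polynomial paths are smooth deterministic paths, so this is the Chen--Chow Corollary, and it is also visible from $\bigcup_m \mathcal{L}_{d,k,m} = \mathcal{U}_{d,k}$ in the proof of Theorem~\ref{thm:chains}; (ii) $\dim \mathcal{U}_{d,k} = \lambda_{d,k}-1$ by Theorem~\ref{thm:EPidentifiability}; and (iii) under Conjecture~\ref{conj:big1} one has $\dim \mathcal{L}_{d,k,m} = \dim \mathcal{P}_{d,k,m} = \min\{\lambda_{d,k}-1,\ dm-1\}$ for all $m \geq 2$, this being exactly the asserted equality in (\ref{eq:twoineqs}).

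Next I would check the two directions that pin the threshold to $M_0$. For $m = M_0$ we have $dM_0 \geq \lambda_{d,k}$, hence $dm-1 \geq \lambda_{d,k}-1$, so (iii) gives $\dim \mathcal{L}_{d,k,M_0} = \lambda_{d,k}-1 = \dim \mathcal{U}_{d,k}$; since $\mathcal{L}_{d,k,M_0} \subseteq \mathcal{U}_{d,k}$ with both irreducible of the same dimension, they are equal, and the same argument yields $\mathcal{P}_{d,k,M_0} = \mathcal{U}_{d,k}$. Conversely, the ceiling satisfies $M_0 - 1 < \lambda_{d,k}/d \leq M_0$, so $d(M_0-1) - 1 < \lambda_{d,k}-1$; thus for $2 \leq M_0 - 1$ statement (iii) gives $\dim \mathcal{L}_{d,k,M_0-1} < \dim \mathcal{U}_{d,k}$, so that inclusion is strict, and likewise for $\mathcal{P}$ (in the remaining case $M_0 = 2$ one instead uses directly that $\mathcal{L}_{d,k,1} = \mathcal{P}_{d,k,1} = \nu_k(\PP^{d-1})$ has dimension $d-1 < \lambda_{d,k}-1$, since $\lambda_{d,k} > d$ for $k \geq 3$). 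Consequently the stabilization constants of Theorem~\ref{thm:chains} are exactly $M = M' = M_0 = \lceil \lambda_{d,k}/d \rceil$, which is (\ref{eq:MMM}).

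Finally, the last assertion is immediate: for any $m \geq M_0$, the chain of Theorem~\ref{thm:chains} gives $\mathcal{U}_{d,k} = \mathcal{L}_{d,k,M_0} \subseteq \mathcal{L}_{d,k,m} \subseteq \mathcal{U}_{d,k}$, hence $\mathcal{L}_{d,k,m} = \mathcal{U}_{d,k}$, and identically $\mathcal{P}_{d,k,m} = \mathcal{U}_{d,k}$; in particular the two signature varieties agree. There is no real obstacle in this argument once Conjecture~\ref{conj:big1} is granted: the entire content sits in that conjectured equality in (\ref{eq:twoineqs}) together with the already-proven dimension $\lambda_{d,k}-1$ of the universal variety. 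The only point requiring a moment's care --- and the closest thing to a ``hard step'' here --- is the sharpness direction, where one must verify that $\lceil \lambda_{d,k}/d\rceil$ is precisely the index at which the bound $dm-1$ overtakes $\lambda_{d,k}-1$, i.e. the elementary chain of inequalities $M_0-1 < \lambda_{d,k}/d \leq M_0$, together with the small-$M_0$ bookkeeping around the Veronese case.
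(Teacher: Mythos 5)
Your proof is correct and follows the same route as the paper's: irreducible inclusion plus matching dimension (from Theorem~\ref{thm:EPidentifiability} and the conjectured equality in~(\ref{eq:twoineqs})) forces equality of varieties. You are, if anything, more careful than the paper — its proof is three sentences and silently treats only the $m \geq M$ direction, while you also verify the sharpness direction, including the bookkeeping at the Veronese case when $M_0 = 2$.
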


\begin{proof}
All three varieties $ \mathcal{P}_{d,k,m} , \mathcal{L}_{d,k,m}$
and $\mathcal{U}_{d,k}$  are irreducible of
  the same dimension. The first two are contained in the third.
 Hence all three coincide.
 \end{proof}

\begin{table}[h]
\begin{center} \begin{tabular}{ | l | l | l | l | l | l | l | l |   p{1.5cm} |} 
\hline  $d  \setminus k$   & 3 & 4 & 5  & 6 & 7 & 8 & 9  \\
\hline     2 & 3 & 4 & 7 & 12 & 21 & 36 & 64 \\
\hline     3 & 5 & 11 & 27 & 66 & 170 & 440 & 1168 \\
\hline     4 & 8 & 23 & 74 & 241 & 826 & 2866 & 10146 \\
\hline     5 & 11 & 41 & 166 & 682 & 2914 & 12664 & 56064 \\
\hline     6 & 16 & 68 & 327 & 1616 & 8281 & 43246 & 229866 \\
\hline
 \end{tabular}
\vspace{-0.11in}
\end{center}
  \caption{\label{tab:MMM}
The formula in (\ref{eq:MMM}) suggests
the value $M$  at which the signature varieties stabilize.
} \medskip
  \end{table}

The following stronger conjecture generalizes the conclusion of 
Example \ref{ex:recovery1}.

\begin{conjecture} 
\label{conj:big2}
If $k \geq 3$ and $m$ is strictly less than the constant $M$ in Corollary \ref{cor:MMM}
then both signature varieties $\,\mathcal{P}_{d,k,m} \,$ and $\, \mathcal{L}_{d,k,m}\,$
are rationally identifiable.
\end{conjecture}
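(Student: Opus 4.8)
The plan is to deduce rational identifiability from the identifiability of the ambient universal variety together with an inductive reconstruction of the parameters. I spell this out for piecewise linear paths; the polynomial case runs in parallel, since by Examples~\ref{ex:cap} and \ref{ex:cmp} both signature maps send the parameter matrix $X\in\KK^{d\times m}$ to the tensor obtained by multiplying each of the $k$ sides of a \emph{fixed} core tensor --- the $k$-th signature of the canonical axis path in $\KK^m$ in one case, of the canonical monomial path in the other --- by $X$. Thus the problem is a Tucker decomposition with known core, and the goal is to show this decomposition is generically unique up to the unavoidable rescaling $X\mapsto\zeta X$ with $\zeta^k=1$.

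First I would lift the question to the full step-$k$ signature. By Theorem~\ref{thm:EPidentifiability} the projection of $\mathcal{G}_{d,k}$ onto the cone over $\mathcal{U}_{d,k}$ is generically $k$-to-$1$, with fibres the $\mu_k$-orbits $\eta\cdot(S^{(j)})_j=(\eta^jS^{(j)})_j$. Hence a generic tensor $T=\sigma^{(k)}(X)$ determines $\sigma^{\le k}(X)$ up to replacing $X$ by $\zeta X$; after this normalization it suffices to prove that $X\mapsto\sigma^{\le k}(X)$ is generically injective on $m$-step paths. By Chen's formula (\ref{eq:PL1}) this map is $(X_1,\dots,X_m)\mapsto\exp(X_1)\otimes\cdots\otimes\exp(X_m)\in\mathcal{G}^k(\KK^d)$, equivalently, after the polynomial isomorphism $\log$, the iterated truncated Baker--Campbell--Hausdorff product $\mathrm{BCH}_k(X_1,\dots,X_m)\in\mathrm{Lie}^k(\KK^d)$.

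Next I would peel off one step at a time. Chen multiplicativity gives $\sigma^{\le k}(X)=\sigma^{\le k}(X')\otimes\exp(X_m)$, where $X'$ is the path of the first $m-1$ steps. The crucial point to prove is that, for generic $X$ with $m<M=\lceil\lambda_{d,k}/d\rceil$, the last step $X_m$ is a rational function of $g:=\sigma^{\le k}(X)$; granting this, $g':=g\otimes\exp(-X_m)=\sigma^{\le k}(X')$ is again group-like, $X'$ has $m-1<M$ steps, and one recurses, the base case $m=1$ being the Veronese variety $\nu_k(\PP^{d-1})$ on which $X_1$ is read off up to scale. To prove the crucial point one passes to $L:=\log g$ with homogeneous components $L_j\in\mathrm{Lie}^j(\KK^d)$: $L_1=X_1+\cdots+X_m$, $L_2=\tfrac12\sum_{i<j}[X_i,X_j]$, and so on. The scheme is to order the $md$ coordinates of $X$ and solve for them one at a time, each from a single linear equation whose coefficients are iterated brackets already expressed in the previously recovered coordinates and whose leading coefficient is generically nonzero --- genericity of the $X_i$ making the relevant $\mathrm{ad}$-maps injective on the subspaces that occur. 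This is exactly the mechanism displayed in Example~\ref{ex:recovery1}, where the Gr\"obner basis elements (\ref{eq:recovery1}) and (\ref{eq:recovery2}) recover $x_{21},x_{12},x_{11}$ in succession by such linear equations, proving the case $d=m=2,\ k=3$.

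The main obstacle is making this triangular elimination work uniformly in $d,k,m$: one must verify that the successive leading coefficients do not vanish identically on the $md$-dimensional parameter space, and this is precisely where $md<\lambda_{d,k}$ enters, ensuring that the space of Lie monomials of degree $\le k$ --- counted by Proposition~\ref{prop:lyndon} --- is large enough to pin down every parameter direction; for $m\ge M$ the variety collapses onto $\mathcal{U}_{d,k}$ (Corollary~\ref{cor:MMM}) and identifiability fails. An unconditional proof of the non-vanishing would in particular show the Jacobian has rank $md$ at a generic point, hence algebraic identifiability and Conjecture~\ref{conj:big1}; upgrading this to \emph{rational} identifiability further requires triviality of the monodromy of the parametrization, confirmed by the computations summarized in Table~\ref{tab:PLdkm} but which I expect to be the genuinely hard step. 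A plausible fallback is to first establish the Jacobian rank by an explicit computation at a well-chosen generic point in Lyndon coordinates, and then obtain triviality of monodromy by a degeneration argument, e.g.\ degenerating the maps $\PP^1\to\PP^{d-1}$ to chains of lines as in Remark~\ref{rem:kontsevich}.
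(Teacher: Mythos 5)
This statement is left as an open conjecture in the paper; there is no proof to compare your proposal against. The paper's support for Conjecture~\ref{conj:big2} consists of Proposition~\ref{prop:ab}, which reduces rational and algebraic identifiability to the single case $k=3$, $d=m$ (first reducing $k\to 3$ exactly as you do, via Theorem~\ref{thm:EPidentifiability}, and then reducing $d\to m$ by uniqueness of a Tucker decomposition with a fixed full-rank core), and Lemma~\ref{lem:computational}, which is a machine verification of the reduced case up to $m=5$ for rational identifiability and $m=15$ for algebraic identifiability. The ensuing Corollary records only that the conjecture holds for $m\le 5$.

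Your strategy diverges from the paper's after the common first step. Where the paper reduces $d\to m$ and hands the residual $d=m$, $k=3$ case to a Gr\"obner-basis computation, you propose to recover $X_m, X_{m-1},\dots, X_1$ one at a time from $g=\sigma^{\le k}(X)$ by inverting Chen's formula in the Lie group. The genuine gap is precisely your ``crucial point'': the group equation $g = h \otimes \exp(v)$ has a group-like solution $h = g\otimes\exp(-v)$ for \emph{every} $v\in\KK^d$, so multiplicativity alone does not single out $X_m$; what pins it down is the nonlinear constraint that $h$ itself be the step-$k$ signature of an $(m-1)$-step path, and that constraint being rationally solvable is exactly what the conjecture asserts. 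As stated, the recursion is circular. Your fallback --- passing to $L=\log g$ and solving the $md$ coordinates of $X$ by a triangular system with generically nonvanishing pivots --- is a reasonable heuristic and agrees with what the Gr\"obner basis in Example~\ref{ex:recovery1} produces for $d=m=2$, $k=3$, but you do not exhibit the triangular structure for general $(d,k,m)$, and you explicitly flag both the pivot nonvanishing and the triviality of monodromy as unresolved. So this is a plausible alternative attack but not a proof; the paper makes the same concession by labeling the statement a conjecture and isolating the irreducible case via Proposition~\ref{prop:ab}, and your writeup should frame itself accordingly.
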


The hypothesis $m < M$ is essential.
Conjecture \ref{conj:big2} fails when
 $m {=} M {=} \lambda_{d,k}/d$.

\begin{remark} \rm
The question of identifiability  is delicate
in the borderline case $\lambda_{d,k} = md$, when
the signature variety exactly fills the universal variety.
 Here we expect algebraic identifiability
(by Conjecture~\ref{conj:big1}), but
 rational identifiability generally fails.
This can be seen for $d=2, k=m=4$.
The $7$-dimensional variety $\mathcal{P}_{2,4,4} = \mathcal{L}_{2,4,4} = \mathcal{U}_{2,4}$
has degree $12$  in $\PP^{15}$. We have two parametrizations from the $\PP^7$ of
 $2 \times 4$ matrices. The parametrization using quartic paths
in $\RR^2$ is $48$-to-$1$. The parametrization using four-segment paths in $\RR^2$
 is only  $4$-to-$1$. 
\end{remark}

\begin{example}[$d=2,m=k=4$] \rm
Consider the four-step path in $\RR^2$ given  by
$$
X = \begin{bmatrix}
29 & 15 & 13 & 2 \\
23 & 26 & 6 & 27 \\
\end{bmatrix}
$$
There are three other paths that have the same $2 \times 2 \times 2 \times 2$ signature tensor:
$$ \begin{bmatrix}
36.74838 &-17.80169 & 37.75532 &    \, 2.29799 \\
 27.39596 & - 9.82926 & 40.23084 &  24.20246
\end{bmatrix},
$$
$$
\begin{bmatrix}
 102.16286 &  -131.13298 & 85.92484 & 2.04528 \\
 104.55786 &  -136.84738 &  86.56467  &   27.72484
 \end{bmatrix},
 $$
 $$
 \begin{bmatrix}
  38.53237 &  38.8057 &  -79.20533 &   60.86735 \\
  28.69523 &   82.7734 & -147.7839 & 118.3152
  \end{bmatrix}.
  $$
Each general point in $\mathcal{L}^{\rm im}_{2,4,4}$ has four preimages.
At least two of them are real.
\hfill $\diamondsuit$
  \end{example}

In the next result we reduce Conjectures~\ref{conj:big1} and \ref{conj:big2} to
$3$-way tensors and paths that span their ambient space. This uses
 concepts from the theory of tensor decomposition~\cite{KB}.

\begin{proposition} \label{prop:ab}
Fix integers $d,k,m$ that satisfy  $d \geq m \geq 1 $ and $k \geq 3$.
\begin{itemize}
\item[(a)] If  $\mathcal{L}_{m,3,m}$  is rationally (resp.~algebraically) identifiable
then so is $\mathcal{L}_{d,k,m}$. \vspace{-0.06in}
\item[(b)] If  $\mathcal{P}_{m,3,m}$  is rationally (resp.~algebraically) identifiable
then so is $\mathcal{P}_{d,k,m}$.
\end{itemize}
\end{proposition}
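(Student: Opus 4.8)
The plan is to exploit the \emph{Tucker structure} of the two parametrizations. Writing the parameters of a piecewise linear path with $m$ segments (resp.\ a polynomial path of degree $m$) in $\RR^d$ as a $d \times m$ matrix $X$, one has, as recalled after (\ref{eq:XSXmono}) and in Example~\ref{ex:cap} (resp.\ Example~\ref{ex:cmp}), that $\sigma^{(k)}(X)$ is obtained from the \emph{fixed} $m \times m \times \cdots \times m$ core tensor $\sigma^{(k)}(C_{\rm axis})$ (resp.\ $\sigma^{(k)}(C_{\rm mono})$) by contracting $X$ against each of its $k$ modes. The proof proceeds by two independent reductions, carried out for $C_{\rm axis}$ to prove (a) and verbatim for $C_{\rm mono}$ to prove (b): first I would reduce $k$ down to $3$, then reduce $d$ down to $m$.

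\emph{Reduction $k \rightsquigarrow 3$.} Since $d \ge m$, a generic parameter matrix $X$ has $\sigma^{(1)}(X) \ne 0$, so after a fixed generic linear change of coordinates in $\RR^d$ all entries of the first signature are nonzero. The downward recovery procedure in the proof of Theorem~\ref{thm:EPidentifiability}, driven by the shuffle relations level by level, then expresses $\sigma^{(3)}(X)$ — uniquely up to the scalar ambiguity by $k$-th roots of unity, hence uniquely as a point of $\PP^{d^3-1}$ — as a rational function of $\sigma^{(k)}(X)$. Undoing the change of coordinates yields a dominant rational map $\mathcal{L}_{d,k,m} \dashrightarrow \mathcal{L}_{d,3,m}$ that intertwines the two parametrizations by $\PP^{dm-1}$. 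Consequently, if $\mathcal{L}_{d,3,m}$ is rationally (resp.\ algebraically) identifiable, so is $\mathcal{L}_{d,k,m}$: compose the rational inverse of $[X] \dashrightarrow [\sigma^{(3)}(X)]$ with the map above for the rational statement; for the algebraic statement, the composite $\PP^{dm-1} \dashrightarrow \mathcal{L}_{d,k,m} \dashrightarrow \mathcal{L}_{d,3,m}$ is the generically finite parametrization of $\mathcal{L}_{d,3,m}$, which together with (\ref{eq:twoineqs}) forces $\dim \mathcal{L}_{d,k,m} = dm-1$.

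\emph{Reduction $d \rightsquigarrow m$.} A generic $d \times m$ matrix $X$ has rank $m$; set $W = \mathrm{colspan}(X) \in \mathrm{Gr}(m,d)$. Provided the core $\sigma^{(3)}(C_{\rm axis})$ (resp.\ $\sigma^{(3)}(C_{\rm mono})$) has full multilinear rank $m$, each flattening of $\sigma^{(3)}(X)$ has column span exactly $W$, so $W$, and then a chosen basis matrix $A_0 = A_0(W)$ depending rationally on Grassmann coordinates, is recovered rationally from $[\sigma^{(3)}(X)]$. After this, $X = A_0 g$ for a unique $g \in \mathrm{GL}(m,\KK)$, and $T_0 := (A_0^{+}, A_0^{+}, A_0^{+}) \cdot \sigma^{(3)}(X) = \sigma^{(3)}(g)$ lies on the cone over $\mathcal{L}_{m,3,m}$. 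If $\mathcal{L}_{m,3,m}$ is rationally identifiable, then $[g]$ is a rational function of $[T_0]$, hence $[X] = [A_0 g]$ is a rational function of $[\sigma^{(3)}(X)]$; and the algebraic statement follows the same way, the fiber of the parametrization of $\mathcal{L}_{d,3,m}$ over a generic point being, modulo the uniquely determined data $(W, A_0)$, a copy of the finite fiber of the parametrization of $\mathcal{L}_{m,3,m}$. Combining the two reductions gives the proposition.

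The hard part will be the genericity bookkeeping that turns these reductions into genuinely rational (not merely set-theoretic) statements: one must check that the canonical cores $\sigma^{(3)}(C_{\rm axis})$ and $\sigma^{(3)}(C_{\rm mono})$ have full multilinear rank $m$ — so that the flattenings of $\sigma^{(3)}(X)$ actually see all of the span $W$ — and that a generic parameter falls in the locus where the Chen--Chow-style recovery of Theorem~\ref{thm:EPidentifiability} is applicable. Both are routine but should be stated with care, and one should pin down precisely how the $k$-th-roots-of-unity ambiguity and the $\mathrm{GL}(m)$-gauge ambiguity disappear upon passing to projective space.
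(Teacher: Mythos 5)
Your proof is correct and follows essentially the same two-step reduction as the paper's: first reduce $k$ to $3$ via the downward shuffle recovery in Theorem~\ref{thm:EPidentifiability}, then reduce $d$ to $m$ via the Tucker structure $\sigma^{(3)}(X)=[[C_\bullet;X,X,X]]$. The only cosmetic difference is that you recover the column span explicitly through Grassmann coordinates and a pseudo-inverse, whereas the paper invokes uniqueness of the Tucker decomposition more succinctly; the full-rank condition on the core that you defer as ``routine'' is the same condition the paper checks by exhibiting an invertible $m\times m$ slice of $\sigma^{(3)}(C_\bullet)$ (upper-triangular for $C_{\rm axis}$, a scaled Cauchy matrix for $C_{\rm mono}$).
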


\begin{proof}
We claim that identifiability, for both 
piecewise linear and polynomial paths,
can be reduced to the case $k=3$ and $d=m$.
In either case, the reduction to $k=3$ follows from Theorem
\ref{thm:EPidentifiability}. If we are given the signature tensor
$\sigma^{(k)}(X)$ of a path $X$ for $k \geq 3$ then we can 
recover each tensor $\sigma^{(l)}(X)$ for $l \leq k$ up to a multiplicative constant.
In particular, for $l=3$, we obtain a unique point $\sigma^{(3)}(X)$ 
in the projective variety $\mathcal{L}_{d,3,m}$ or 
$\mathcal{P}_{d,3,m}$ respectively.

We next show that the case $d>m$ reduces to the case $d=m$.
Our path is encoded in a $d \times m$ matrix $X$. Its third
signature tensor has the format $d {\times} d {\times} d$:
\begin{equation}
\label{eq:tucker}
 \sigma^{(3)}(X) \,\, = \,\,\,
[[ C_\bullet ; X,X,X ]] \,\,\,= \,\,\, C_\bullet \times_1 X \times_2 X \times_3 X . 
\end{equation}
Here $C_\bullet$ is a tensor of format $m \times m \times m$ that is independent of $X$.
It represents either $C_{\rm axis}$ in Example \ref{ex:cap} or $C_{\rm mono}$ in Example \ref{ex:cmp}.
The notation used in (\ref{eq:tucker}) is standard in
 the literature on {\em tensor decomposition}, e.g.~\cite{LMV} or \cite[\S 4, eqn (9)]{KB}.
On the right in (\ref{eq:tucker}) we take the product of
a small $3$-way tensor with the same matrix on all three sides to create a larger $3$-way tensor.

We claim that the {\em core tensor} $C_\bullet$ has rank $\geq m$. 
Consider the  $m \times m$ matrix
$\tilde C_{\bullet}$ seen in the first slice $(i=1)$ of that tensor.
It suffices to show that this matrix is invertible.
For the axis paths, the matrix $\tilde C_{\rm axis}$ is invertible because it is upper-triangular
with non-zero diagonal entries.
For the monomial path,  the $m \times m$ matrix $\tilde C_{\rm mono} $ 
has the entries $\frac{  jk }{ (j+1)(j+k+1)}$. After scaling rows and columns,
this is a Cauchy matrix. Similarly to (\ref{eq:cauchymatrix}), we compute
$$
{\rm det}(\tilde C_{\rm mono}) \,\, = \,\, \frac{d \, !}{d+1} \cdot \frac{\prod_{1 \leq i < j \leq d}\, (j-i)^2}
{\prod_{i=1}^d \prod_{j=1}^d (i+j+1)} \,\,\,\, \not= \,\, 0. $$

We can assume that also the matrix
$X$ has rank $m$ since the definitions of rational and algebraic identifiability refer to a path that is sufficiently general. Under these hypotheses, the {\em Tucker decomposition} of a $3$-way tensor is 
unique in the following sense. All solutions  $(D,Y)$ to the 
system of $d^3$  polynomial equations given by
$\,[[ D; Y, Y,Y]] = \sigma^{3}(X)\,$  have the form
$D = [[C_\bullet; g,g,g]]$ and $Y = \eta g^{-1} X$ for some $g \in {\rm GL}(m,\RR)$
and some $\eta \in \CC$ with $\eta^3 = 1$.

Fix any such solution $(D,Y)$.  Consider the system of $m^3$ polynomial equations given by
$ [[C_\bullet; g,g,g]] = D$. By identifiability for $d=m$, this
 has a unique solution $g \in {\rm GL}(m,\RR)$. Here uniqueness is up to scale.
We recover the matrix encoding our path by setting $X = g Y$.
\end{proof}

We established the following partial result using computational methods.

\begin{lemma} \label{lem:computational}
For both signature varieties  $\mathcal{L}_{m,3,m}$ and $\mathcal{P}_{m,3,m}$ in $\,\PP^{m^3-1}$,
rational identifiability holds up to $m = 5$, and algebraic identifiability holds up to $m = 15$.
\end{lemma}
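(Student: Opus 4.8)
The plan is to reduce both claims to computable invariants of the two parametrization maps and then to verify them, pushing the cheap invariant (dimension) to $m=15$ and the expensive one (birationality) only to $m=5$. Write $\phi_{\rm axis},\phi_{\rm mono}\colon\PP^{m^2-1}\dashrightarrow\PP^{m^3-1}$ for the maps sending a projectivized $m\times m$ matrix $X$ to $\sigma^{(3)}(X)$, obtained by contracting the fixed $m\times m\times m$ core tensor $\sigma^{(3)}(C_{\rm axis})$, resp.\ $\sigma^{(3)}(C_{\rm mono})$, of Examples~\ref{ex:cap} and~\ref{ex:cmp} along all three legs with $X$; their closed images are $\mathcal{L}_{m,3,m}$ and $\mathcal{P}_{m,3,m}$. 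Algebraic identifiability means these images attain the upper bound $m^2-1$ of~(\ref{eq:twoineqs}), equivalently that the differential of the associated affine-cone map $\KK^{m^2}\to\KK^{m^3}$ has rank $m^2$ at a generic point. Rational identifiability means the map is birational onto its image; since the map is homogeneous of degree $3$ and we work projectively, rescaling $X$ by a cube root of unity leaves $[X]$ fixed, so birationality here is just generic injectivity.

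For the algebraic claim up to $m=15$, I would pick a pseudo-random integer (or $\mathbb{F}_p$) matrix $X_0$, form the $m^2\times m^3$ Jacobian $J$ of the coordinates $\sigma_{i_1i_2i_3}(X)$ with respect to the $x_{ij}$, and compute ${\rm rank}\,J(X_0)$ by exact or modular linear algebra. If that rank equals $m^2$ then, by lower semicontinuity of rank, the generic rank of the differential is at least $m^2$, hence exactly $m^2$, so the cone over the image is $m^2$-dimensional and the image has dimension $m^2-1$. Running this for $m=1,\dots,15$ and both cores settles the algebraic part; the matrices are at most $225\times 3375$, so the computation is entirely routine.

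For rational identifiability up to $m=5$, I would follow the recipe of Example~\ref{ex:recovery1}: in $\KK[x_{ij},\sigma_{i_1i_2i_3}]$ form the graph ideal generated by the $2\times 2$ minors of the $2\times m^3$ matrix with rows $(\sigma_{i_1i_2i_3})$ and $(\sigma_{i_1i_2i_3}(X))$, saturate by the ideal of the base locus (the common zeros of the $\sigma_{i_1i_2i_3}(X)$), and compute a reduced Gr\"obner basis for a block order that eliminates the $x_{ij}$. As in~(\ref{eq:recovery1}) and~(\ref{eq:recovery2}), the basis will contain polynomials linear in the $x_{ij}$ with coefficients in $\KK[\sigma]$; collecting those coefficients into a matrix $A(\sigma)$, it then suffices to check that $A$, specialized at the image of one pseudo-random parameter point, has corank $1$. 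Since that parameter point itself lies in the kernel of the specialized $A$, this forces the generic fiber to be the single point $[X]$, i.e.\ the map is birational; equivalently, and as a certificate valid over $\QQ$, one reads explicit rational inversion formulas $x_{ij}=g_{ij}(\sigma)/h(\sigma)$ off the Gr\"obner basis and verifies the finitely many polynomial identities they encode. I would run this for $m=1,\dots,5$ and both cores $C_{\rm axis}$, $C_{\rm mono}$.

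The hard part is the Gr\"obner-basis step for $m=4$ and especially $m=5$, where the graph ideal carries $m^2+m^3=150$ variables; the Jacobian rank checks are effortless by comparison, which is precisely why the algebraic bound reaches $m=15$ while the rational one stops at $m=5$. I would mitigate the cost by first computing modulo a large prime to locate the quasi-linear relations and a favorable term order, by exploiting ${\rm GL}(m,\KK)$-equivariance to normalize $X$ before the computation, and by re-deriving the final inversion identities symbolically over $\QQ$ a posteriori, since verifying a given identity is far cheaper than discovering it. The outcome is consistent with Conjecture~\ref{conj:big2}: because $m\le d=m$ here, we lie in the range $m<M$ of Corollary~\ref{cor:MMM} for every $m\ge2$, so none of the borderline non-identifiability discussed after that corollary is expected in this range.
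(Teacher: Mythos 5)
Your treatment of algebraic identifiability is the same as the paper's: both check that the $m^2\times m^3$ Jacobian of $X\mapsto\sigma^{(3)}(X)$ has rank $m^2$ at a single randomly chosen integer (or $\mathbb{F}_p$) point, then invoke lower semicontinuity of rank. No meaningful difference there.

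For rational identifiability, you take a genuinely different computational route. The paper fixes one random $X_0\in\ZZ^{m\times m}$, substitutes its signature tensor for the $\sigma$-variables, and solves the resulting system $\sigma^{(3)}(X)=\sigma^{(3)}(X_0)$ of $m^3$ inhomogeneous cubics in only $m^2$ unknowns, verifying via a Gr\"obner basis that the solution set is exactly $\{\eta X_0:\eta^3=1\}$. You instead compute the (saturated) graph ideal in all $m^2+m^3$ variables, eliminate, and attempt to read off rational inversion formulas $x_{ij}=g_{ij}(\sigma)/h(\sigma)$. Your approach buys a stronger certificate — explicit section formulas over $\QQ$, as in (\ref{eq:recovery1}) and (\ref{eq:recovery2}) — valid over the whole variety rather than at one fiber, but at substantially greater cost: for $m=5$ you are running Buchberger in $150$ variables, whereas the paper works in $25$. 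Your own remark that the Gr\"obner step ``is the hard part'' understates this; I am skeptical it terminates at $m=5$ even with the mitigations you list, which is presumably why the paper chose the fiber-at-a-random-point strategy. Both strategies are logically sound, and in both one implicitly uses that a random integer point is generic for the fiber structure; neither proof spells this out, so you are no worse off than the original on that score. One small caveat in your writeup: you assert that the elimination Gr\"obner basis ``will contain polynomials linear in the $x_{ij}$''. That is a consequence of birationality, not an a priori fact, so you should phrase the test as: check whether such quasi-linear relations suffice to pin down $[X]$ (corank $1$), and if they do not, conclude non-birationality rather than error out.
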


\begin{proof}[Computational Proof]
We prove algebraic identifiability by examining the Jacobian matrix of the parametrization
$X \mapsto \sigma^{(3)}(X)$.
This is a matrix of format $m^2 \times m^3$ whose entries are 
homogeneous quadrics in the parameters $x_{ij}$.
We show that this matrix has rank $m^2$. We do this by computing the rank 
 for \underline{one} random choice of 
integer parameters. This has rank $m^2$, and we conclude that
the resulting  variety in $\PP^{m^3-1}$ has the expected  dimension $m^2-1$.

We prove rational identifiability by checking it for \underbar{one} random
matrix  $X_0 \in \ZZ^{m \times m}$.  The identity
 $\,\sigma^{(3)}(X) = \sigma^{(3)}(X_0)\,$ translates into a system of
 $m^3$ inhomogeneous cubic equations in $m^2$ unknowns $x_{ij}$, namely the entries
 of $X$. We compute a Gr\"obner basis for this system. The output shows
that it  has precisely three solutions, namely $X = \eta X_0$ where $\eta^3 = 1$.
\end{proof}

\begin{corollary}
Conjecture~\ref{conj:big1} holds when $m \leq 15$.
Conjecture~\ref{conj:big2} holds when $m \leq 5$.
\end{corollary}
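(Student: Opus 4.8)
The plan is to derive the Corollary from Proposition~\ref{prop:ab} and Lemma~\ref{lem:computational}, after reformulating ``equality in~(\ref{eq:twoineqs})''. The key preliminary observation is that $dm\le d^2<\lambda_{d,k}$ whenever $m\le d$ and $k\ge 3$: this is a routine consequence of Proposition~\ref{prop:lyndon}, since already $\lambda_{d,3}=d+\binom{d}{2}+\tfrac13(d^3-d)>d^2$ for every $d\ge 2$. Hence for $m\le d$ the right-hand side of~(\ref{eq:twoineqs}) equals $dm-1$, so in this range ``equality in~(\ref{eq:twoineqs})'' for $\mathcal{L}_{d,k,m}$ and $\mathcal{P}_{d,k,m}$ is \emph{exactly} the statement that these two varieties are algebraically identifiable. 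The same inequality also forces $m<M$ (with $M$ as in Corollary~\ref{cor:MMM}): indeed $\mathcal{L}_{d,k,m}$ has dimension at most $dm-1<\lambda_{d,k}-1=\dim\mathcal{U}_{d,k}=\dim\mathcal{L}_{d,k,M}$ by Theorems~\ref{thm:EPidentifiability} and~\ref{thm:chains}, so the standing hypothesis of Conjecture~\ref{conj:big2} is automatic when $m\le d$.

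With this reformulation the case $m\le d$ is immediate. For $k\ge 3$ and $2\le m\le d$, Proposition~\ref{prop:ab} reduces algebraic (resp.\ rational) identifiability of $\mathcal{L}_{d,k,m}$ and $\mathcal{P}_{d,k,m}$ to that of $\mathcal{L}_{m,3,m}$ and $\mathcal{P}_{m,3,m}$, and Lemma~\ref{lem:computational} establishes the algebraic version for $m\le 15$ and the rational version for $m\le 5$. This settles Conjecture~\ref{conj:big1} for all $d,k$ with $k\ge 3$ and $m\le\min\{d,15\}$, and Conjecture~\ref{conj:big2} for $m\le\min\{d,5\}$.

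It remains to treat $d<m$. Here I would first invoke Theorem~\ref{thm:EPidentifiability} as at the start of the proof of Proposition~\ref{prop:ab}: from $\sigma^{(k)}(X)$ with $k\ge3$ one recovers $\sigma^{(3)}(X)$ up to scale, so $X\mapsto\sigma^{(3)}(X)$ factors through $X\mapsto\sigma^{(k)}(X)$; hence $\mathcal{L}_{d,k,m}$ dominates $\mathcal{L}_{d,3,m}$ (and likewise for $\mathcal{P}$), giving $\dim\mathcal{L}_{d,3,m}\le\dim\mathcal{L}_{d,k,m}\le dm-1$. When $dm\le\lambda_{d,3}$ these bounds pinch once $\mathcal{L}_{d,3,m}$ is known to be algebraically identifiable, and for the finitely many pairs $2\le d<m\le 15$ this is checked by the same Jacobian-rank computation used in Lemma~\ref{lem:computational} (and, for $m\le 5$, rational identifiability by the corresponding Gr\"obner-basis fiber count). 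In the complementary ``saturated'' regime $dm>\lambda_{d,3}$ one combines the monotonicity $\mathcal{L}_{d,k,m-1}\subseteq\mathcal{L}_{d,k,m}$ with Theorem~\ref{thm:chains}: once $\dim\mathcal{L}_{d,k,m_0}=\lambda_{d,k}-1$ for some $m_0\le m$, irreducibility forces $\mathcal{L}_{d,k,m}=\mathcal{U}_{d,k}$, and then both sides of~(\ref{eq:twoineqs}) equal $\lambda_{d,k}-1$.

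The step I expect to cost the most work is precisely this saturated regime with $d<m$ and $k$ large, where $dm$ exceeds $\lambda_{d,3}$ but not $\lambda_{d,k}$, so that reduction to $k=3$ no longer controls the dimension and one needs a lower bound $\dim\mathcal{L}_{d,k,m}\ge dm-1$ that is uniform in $k$. The cleanest plan is to push the core-tensor argument of Proposition~\ref{prop:ab} through with a rank-$d$ factor matrix $X$, using the non-vanishing of the Cauchy-type determinants there (cf.~(\ref{eq:cauchymatrix})) to show that the differential of $X\mapsto[[\,C_\bullet;X,X,X\,]]$ has full column rank $dm$ at a generic $X$; since $\dim\mathcal{U}_{d,k}=\lambda_{d,k}-1\ge dm-1$ is available from Theorem~\ref{thm:EPidentifiability}, this pins $\dim\mathcal{L}_{d,k,m}=dm-1$ in the thin regime and, together with the monotonicity argument above, covers the saturated one as well. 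All remaining verifications are then finite and are subsumed by the computations recorded in Lemma~\ref{lem:computational}.
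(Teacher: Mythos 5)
Your reduction of the $m \le d$ case to Proposition~\ref{prop:ab} and Lemma~\ref{lem:computational} is correct and is precisely what the paper does; the preliminary observations ($dm\le d^2<\lambda_{d,k}$ for $k\ge3$, hence equality in~(\ref{eq:twoineqs}) coincides with identifiability and $m<M$ holds automatically) are right and useful glue.

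The paper dispatches the remaining case $d<m$ in one sentence (``this requires separate computations, like those in the derivation of Lemma~\ref{lem:computational}''), and your proposal goes further in trying to spell out what those computations are. The reduction to finitely many $(d,m,k)$-checks via monotonicity of $\dim\mathcal{L}_{d,k,m}$ in $k$ is the right idea. But the specific engine you propose for the hard middle regime is internally inconsistent: having correctly observed that when $\lambda_{d,3}<dm\le\lambda_{d,k}$ the reduction to $k=3$ ``no longer controls the dimension,'' you then propose to establish full column rank $dm$ for the differential of the $k=3$ core-tensor map $X\mapsto[[\,C_\bullet;X,X,X\,]]$. That differential has image inside the affine cone over $\mathcal{U}_{d,3}$, whose dimension is $\lambda_{d,3}$, so its rank is at most $\lambda_{d,3}<dm$ and the claim cannot hold. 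To close the gap one must run the Jacobian-rank check on the order-$k$ map $X\mapsto\sigma^{(k)}(X)$ itself (equivalently the $k$-way Tucker product with an $m\times\cdots\times m$ core), at the threshold value of $k$ where $\lambda_{d,k}$ first reaches $dm$; monotonicity in $k$ handles larger $k$, and monotonicity in $m$ together with Theorem~\ref{thm:chains} handles the genuinely saturated small-$k$ cases. In short, the verification must be done at the relevant tensor order, not at $k=3$ — which is exactly why the paper says ``separate computations'' rather than invoking Proposition~\ref{prop:ab}.
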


\begin{proof} If $d \geq m$ then this follows immediately from
Proposition \ref{prop:ab} and Lemma \ref{lem:computational}.
For $d < m$ this requires separate computations, like those in
the derivation of Lemma~\ref{lem:computational}.
\end{proof}

\begin{remark}  \rm 
Chen's formula (\ref{eq:PL1}) offers an alternative approach to
algebraic identifiability of $\mathcal{L}_{d,k,m}$.
The product rule of calculus is valid in the tensor algebra,
and can be used to write the Jacobian of the map that takes
$X = (X_1,\ldots,X_m)$ to (\ref{eq:PL1}) and then further to
 $\sigma^{(3)}(X)$. This leads to a tensor algebra
formula for the Jacobian that is inductive in $m$.
Using this, for small values of $m$, 
we also derived Proposition \ref{prop:ab} and
checked the rank of the Jacobian.
\end{remark}

\section{Expected Signatures} \label{sec:expected}

We now shift gears, in that we replace our deterministic paths
in $\RR^d$ with random paths~$X=X(\omega)$. The signature $\sigma^{(k)}(X)$ is a well-defined tensor-valued random variable, which takes values in the real universal variety $\mathcal{U}^\RR_{d,k}$ in affine space $(\RR^d)^{\otimes k}$, 
whenever sample paths are sufficiently regular, or a stochastic integration theory which respects the chain-rule is used. Thus, any such process, with randomly generated paths, induces a 
probability distribution on $\mathcal{U}^\RR_{d,k}$. In this section we study  projective varieties in
$\PP^{d^k-1}$ that arise from the expectation of such a probability distribution on paths.

\subsection{Brownian Motion}

Following Lyons \cite{CL, LyoICM}, we are interested in the 
{\em expected signature tensor} $\mathbb{E}(\sigma^{(k)}(X))$.
This is a convex combination in $(\RR^d)^{\otimes k}$ of points in $\,\mathcal{U}^\RR_{d,k}$,
weighted by the probability distribution. Since the
universal variety $\mathcal{U}^\RR_{d,k}$
is not a linear space, the tensor $\mathbb{E}(\sigma^{(k)}(X))$
 is usually not a point in $\mathcal{U}^\RR_{d,k}$.
The varieties of expected signature tensors that follow all
live in $\PP^{d^k-1}$. However, unlike the signature varieties in Section \ref{sec:pwlp},
they are now not subvarieties of~$\,\mathcal{U}_{d,k}$.

The prototype of a stochastic process 
in $\RR^d$ is the simple random walk $Z=Z(\omega,d)$
that starts at the origin $Z_0=0$ and takes equally likely 
axis-parallel steps $Z_{i+1} - Z_i$.
These steps are independent and identically distributed (i.i.d.) in 
$\{e_1,-e_1,\ldots,e_d,-e_d\}$, with covariance given by $d^{-1} I$ where $I = \sum_{i=1}^d e_i^{\otimes 2}$ is the $d \times d$ identity matrix.
A realization of this process, on $\{1,\ldots,m\}$ say,  is viewed as random axis-parallel path in $\RR^d$, moving linearly  at unit speed from $Z_i$ to $Z_{i+1}$.

The Central Limit Theorem suggests that we rescale the path as follows. Set
$$  Z^{(m)}_t \,\,:=\,\, m^{- \frac{1}{2}} Z_{mt} $$
where now $Z^{(m)}$ is a path on $[0,1]$. This is the construction of 
Subsection \ref{subsec:lyonsxu}, but with~random 
steps $X_i := m^{- \frac{1}{2}} (Z_{i+1} - Z_i), \ i =1, \ldots,m$. 
Under the given assumptions, one computes
$$
\mathbb{E}\bigl(\sigma (Z^{(m)})\bigr) \,=\, (\mathbb{E}( \exp (X_i) )^{\otimes m} 
\,=\, \left( {2d} \right)^{-m} 
\biggl(\sum_{i=1}^d {\rm exp}( m^{-1/2} e_i) + 
\sum_{i=1}^d {\rm exp}(- m^{-1/2} e_i) \biggr)^{\otimes m}.
$$
This expression converges, as $m\to\infty$, to the tensor exponential $\exp ( \frac{1}{2d}I ) $; equivalently the expected signature of $d^{1/2} Z^{(m)}$ converges
to $\exp ( \frac{1}{2} I )$. On the other hand, by the central limit theorem, have convergence in law $\,d^{1/2} Z^{(m)}_1 \to N(0,I)$.
 More generally, by  {\it Donsker's invariance principle} (a.k.a.~functional central limit theorem), the
 random path $d^{1/2} Z^{(m)}$ converges in law to a $d$-dimensional ``standard'' Brownian motion $B$ (with zero drift and unit covariance). Hence, the tensor series $\exp ( \frac{1}{2}I ) $ has the natural interpretation as {\it expected signature of Brownian motion}. This is known as {\it Fawcett's formula}, which is a starting point for 
  stochastic integration and rough paths. For details we refer to the textbook \cite{FH}.
 
 We now fix  $\mu \in \RR^d$ and  $\Sigma \in (\RR^d)^{\otimes 2}$  symmetric and positive definite.
 By adjusting the discrete walk approximations, or by a direct transformation of the form $X_t := \mu t + \sqrt{\Sigma} B_t$, one obtains {\em Brownian motion} $X$ on $[0,1]$
 with {\rm drift} $\mu $ and covariance $\Sigma$.
Note that $X_1 \sim N(\mu,\Sigma)$.

Both $\mu$ and $\Sigma$
are elements in  $T^n(\RR^d)$, provided $n \geq 2$.
Hence so is the sum $\mu + \frac{1}{2}\Sigma$ and its exponential.
If we define $\sigma(X)$ via iterated {\em Stratonovich stochastic integration} then,
by \cite[Ex.~3.22]{FH}, the following identity holds in the truncated tensor algebra $T^n(\RR^d)$:
\begin{equation}
\label{eq:stratono}
 \mathbb{E}(\sigma(X) )\,\, = \,\, {\rm exp}\bigl(\, \mu +  \frac{1}{2} \Sigma \,\bigr) .
  \end{equation}

The coefficients $ \sigma_{i_1 i_2 \cdots i_k}$ of this tensor series refine
classical Gaussian moments. We think of them as ``non-commutative moments''.
Indeed, $ \sigma_{i_1 i_2 \cdots i_k}$  plays the role of the moment associated with the
monomial $e_{i_1} \otimes e_{i_2} \otimes \cdots \otimes e_{i_k}$ in $T^n (\RR^d)$. We note that all moments of multivariate Gaussians are linear expressions in the
expected signatures of Brownian motion. 

\begin{proposition} \label{prop:moments}
Fix a $d$-dimensional Gaussian $Z\sim N\left( \mu ,\Sigma
\right) $ and corresponding Brownian motion $X$ with drift $\mu$ and covariance $\Sigma$.
For $u \in \mathbb{N}^d$ consider the shuffle product
$ w(u) = 1^{\shuffle u_1 }\shuffle \dots \shuffle d^{\shuffle u_d} $.
Then the  Gaussian moment $m_u =  \mathbb{E} \left( Z_{1}^{u_{1}}\cdots
Z_{d}^{u_{d}}\right)$ is equal to the shuffle linear form $ \sigma_{w(u)}$
evaluated at the coefficients of the expected signature
$ \mathbb{E}(\sigma(X) )$ in (\ref{eq:stratono}).
\end{proposition}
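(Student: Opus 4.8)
The plan is to combine three ingredients: linearity of the shuffle form, the pathwise shuffle (group-like) relations, and the Gaussian law of the endpoint of $X$. The one point that must be kept straight throughout is that $\sigma_{w(u)}$ is being applied to the \emph{expected} signature $\mathbb{E}(\sigma(X))$, which is \emph{not} a group-like element of $T^n(\RR^d)$; only the individual sample-path signatures $\sigma(X(\omega))$ are group-like, and it is the linearity of $\sigma_{w(u)}$ that lets us pass between the two.

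First I would recall that, by definition, $\sigma_{w(u)}$ is the fixed linear form on $T^n(\RR^d)$ (for any $n \ge u_1 + \cdots + u_d$) attached to the shuffle monomial $1^{\shuffle u_1}\shuffle\cdots\shuffle d^{\shuffle u_d}$, in the sense of (\ref{eq:shuffleforms}). Next, for almost every $\omega$ the Stratonovich signature $\sigma(X(\omega))$ is group-like: this is the algebraic expression of the product rule, which Stratonovich calculus obeys, and it is exactly the multiplicativity characterization of Lemma~\ref{lem:GroupLike} (alternatively one may mollify $X$, invoke the Chen--Chow Theorem~\ref{thm:chen}, and pass to the limit). Applying the shuffle identity $\sigma_{I\shuffle J}=\sigma_I\sigma_J$ repeatedly to the one-letter words $1,2,\ldots,d$ taken with multiplicities $u_1,\ldots,u_d$ yields, almost surely,
\[
\sigma_{w(u)}(\sigma(X)) \;=\; \sigma_1(X)^{u_1}\,\sigma_2(X)^{u_2}\cdots\sigma_d(X)^{u_d}.
\]
Since $X(0)=0$, the first signature level is $\sigma^{(1)}(X)=X(1)$, so $\sigma_i(X)$ is the $i$th coordinate of the endpoint $X_1:=X(1)$, and $X_1\sim N(\mu,\Sigma)$ has the same law as $Z$. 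Taking expectations of the displayed identity (the right-hand side is a polynomial in a Gaussian vector, hence integrable, and each iterated Stratonovich integral $\sigma_I(X)$ with $|I|=|u|$ has moments of all orders) and using linearity of $\mathbb{E}$ together with linearity of $\sigma_{w(u)}$,
\[
\sigma_{w(u)}\bigl(\mathbb{E}(\sigma(X))\bigr) \;=\; \mathbb{E}\bigl(\sigma_{w(u)}(\sigma(X))\bigr) \;=\; \mathbb{E}\bigl(Z_1^{u_1}\cdots Z_d^{u_d}\bigr) \;=\; m_u,
\]
and invoking (\ref{eq:stratono}) rewrites the left-hand side as $\sigma_{w(u)}(\exp(\mu+\tfrac12\Sigma))$, which is the claim.

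I expect the only genuinely delicate step to be the justification of pathwise group-likeness, i.e.\ that the Stratonovich iterated integrals of Brownian motion satisfy the shuffle relations almost surely; once that is granted, everything else is bookkeeping with a finite linear form. A fully self-contained alternative would bypass probability altogether by expanding $\sigma_{w(u)}(\exp(\mu+\tfrac12\Sigma))$ using the compatibility of the tensor exponential with the shuffle (Hopf algebra) structure on $T^n(\RR^d)$; this reproduces the classical Wick/Isserlis formula for the Gaussian moment $m_u$ directly, but the probabilistic route above is considerably shorter.
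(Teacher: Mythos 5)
Your proof is correct, but it takes a genuinely different route from the paper's. The paper argues algebraically: apply the abelianization map $e_{i_1}\otimes\cdots\otimes e_{i_k}\mapsto t_{i_1}\cdots t_{i_k}$ to the tensor exponential $\exp(\mu+\tfrac12\Sigma)$ in (\ref{eq:stratono}), recognize the result as the classical Gaussian moment generating function $\exp(\mu\cdot t+\tfrac12 t^T\Sigma t)$, and observe that reading off the coefficient of $t^u$ in the abelianization aggregates the $\sigma_I$ exactly as the shuffle linear form $\sigma_{w(u)}$ does. You instead argue probabilistically and pathwise: each sample-path signature $\sigma(X(\omega))$ is group-like, so Lemma~\ref{lem:GroupLike} iterates to give $\sigma_{w(u)}(\sigma(X))=\sigma_1(X)^{u_1}\cdots\sigma_d(X)^{u_d}=Z_1^{u_1}\cdots Z_d^{u_d}$ almost surely; then linearity of the finite form $\sigma_{w(u)}$ lets you pull $\mathbb{E}$ inside and the claim drops out from $Z=X(1)\sim N(\mu,\Sigma)$. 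Both are valid. Your route avoids invoking the explicit Gaussian MGF identity (it reduces to the tautology that $m_u$ is defined as $\mathbb{E}[Z^u]$) and it extends verbatim to any random path whose sample signatures are group-like, not just Brownian motion; the cost is that you must grant the almost-sure shuffle relations for Stratonovich iterated integrals, the step you correctly flag as the only delicate point (and which you would discharge, as you suggest, by Chen--Chow after mollification or by citing the geometric rough-path lift of Brownian motion). The paper's route requires no probability beyond (\ref{eq:stratono}) itself, but leans on identifying the abelianized exponential with the MGF; it also implicitly uses the same combinatorial fact you use, namely that $e_{w(u)}=u!\sum_{\mathrm{ab}(I)=u}e_I$.
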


Here is an example for $d=3$. The moment
$\, m_{(1,1,1)} = \mathbb{E}(Z_1 Z_2 Z_3) \,$ equals
\begin{equation}
\label{eq:w111}
  \quad \sigma_{w(1,1,1)} \, = \, \sigma_{1 \,\shuffle \,2\,\shuffle \,3 } \, = \,
\sigma_{123} + \sigma_{132} + \sigma_{213} + \sigma_{231} + \sigma_{312} + \sigma_{321}.
\end{equation}
If we write $\Sigma = (s_{ij})$ then the six 
signatures in (\ref{eq:w111}) are $\sigma_{ijk} = \frac{1}{6} \mu_i \mu_j \mu_k 
+ \frac{1}{4} \mu_i s_{jk} + \frac{1}{4} s_{ij} \mu_k$.
Their sum is the familiar formula for Gaussian moments:
$\, m_{(1,1,1)} = \mu_1 \mu_2 \mu_3 \, + \,\mu_1 s_{23} \,
+ \mu_2 s_{13} + \mu_3 s_{12}$.

\begin{proof}
If we replace each non-commutative monomial
$e_{i_1} \otimes e_{i_2} \otimes \cdots \otimes e_{i_k}$  in the series
(\ref{eq:stratono}) by the corresponding commutative monomial
$t_{i_1} t_{i_2} \cdots t_{i_k}$ then we obtain the familiar moment 
generating function for Gaussians, displayed in e.g.~\cite[eqn.~(1)]{AFS}.
This symmetrization map from tensor series to commutative series
aggregates the coefficients as sums $m_u = \sigma_{w(u)}$.
\end{proof}

If the covariance matrix $\Sigma$ is set to $0$  then the path is linear,
and all signature tensors are symmetric of rank $1$.
In this case, the moment $m_u$ equals $\sigma_I$ times the multinomial
coefficient $\binom{|u|}{u}$ where $I = 1^{u_1} 2^{u_2} \cdots d^{u_d}$.
Geometrically: moment variety equals
signature variety equals Veronese. This remains true if $\Sigma$ is set to a 
fixed constant matrix (see Theorem \ref{thm:identhom}).

\smallskip

In what follows, we consider Brownian motion $X$ and its associated Gaussian $Z$
parametrized by $(\mu,\Sigma)$ as above.
We focus on the degree $k$ component in the tensor  series (\ref{eq:stratono}).
This $d {\times} d {\times} \cdots {\times} d$ tensor is denoted by $\mathbb{E}(\sigma^{(k)}(X))$ and called
the {\em expected Brownian signature tensor} of step $k$. Each of its entries $\sigma_{i_1 i_2 \ldots i_k}$
is a homogeneous polynomial of degree $k$
in the $d+\binom{d+1}{2}$ model parameters. Here, the
$d$ parameters coming from $\mu$ have degree one, and the
$\binom{d+1}{2}$ parameters coming from $\Sigma$ have degree two.
One can derive explicit formulas for $\sigma^{(k)}(X)$ in terms of $\mu$ and $\Sigma$,
similar to those in Example  \ref{ex:hereareforumulas}. See 
Example \ref{ex:XQ} and Remark \ref{rmk:mag} for the analogous 
situation where $\Sigma$  is skew-symmetric instead of symmetric.

We define the {\em expected Brownian signature variety} to be the
subvariety of $\PP^{d^k-1}$ that is parametrized by the
tensors $\sigma^{(k)}(X)$. This projective variety 
is denoted by $\mathcal{B}_{d,k}$.
Similarly, we can also define a projective variety
$\mathcal{B}_{d,\leq n}$ that lives in the projective space
associated with $T^n(\RR^d)$.
The premise of this section is the study of these varieties and their secant varieties.

\begin{example} \label{ex:Bd2} \rm
Let $k=2$. The expected Brownian signature matrix is given by 
$\frac{1}{2}(\mu \mu^T +  \Sigma)$, and as such it is symmetric.
 It equals half of the Gaussian second order moment matrix.  The variety $\mathcal{B}_{d,2}$ lives in
  $\PP^{d^2-1}$ but it is 
 actually a linear
subspace of dimension  $\binom{d+1}{2}-1$, defined by the equations that reflect the symmetry: 
\,$b_{ij}-b_{ji}=0 \,$ for $1\leq i <  j \leq d$. 
\hfill $\diamondsuit$
\end{example}

The symmetry observed in Example \ref{ex:Bd2} generalizes to expected signatures of order $ k \geq 3$.

\begin{remark} \label{prop:symm} \rm
The projective variety $\mathcal{B}_{d,k}$ lives in $\PP^{d^k-1}$
and its dimension is bounded above by $  d + \binom{d+1}{2}-1$.
It lives in a linear subspace of  dimension $\,\frac{d^k +  d^{\lceil \frac{k}{2} \rceil}}{2} -1 $,
namely the set of tensors that satisfy the reflectional symmetries
$\,\sigma_{i_1 i_2 \cdots i_{k-1} i_k} = \sigma_{i_k i_{k-1} \cdots i_2 i_1}$.
This holds because every term in  the expansion of
$\,\mathbb{E}(\sigma^{(k)}(X)) \,=\,\bigl(\mu + \frac{1}{2} \Sigma\bigr)^{\otimes k}\,$
has a matching term with respect to reflectional symmetry.
For instance, for $k=5$, the term  $\mu \otimes \Sigma \otimes \mu \otimes \mu $
matches $\mu \otimes \mu \otimes \Sigma \otimes \mu$.
\end{remark}

\begin{example} \rm
The expected Brownian signature tensor variety $\mathcal{B}_{2,3}$ is a cubic threefold in $\PP^7$.
Its ambient dimension is $5$, thanks to 
  the two relations $b_{122}=b_{221}$ and $b_{112}=b_{211}$ from Remark \ref{prop:symm}.
  In that $\PP^5$, our variety has codimension $2$ and is cut out by the three $2 \times 2$ minors of 
\begin{equation}                                                                                                     
\label{eq:B23}
 \begin{pmatrix}                                                                                                      
b_{111} & b_{121} & 2b_{122}-b_{212} \\                        
2b_{112}-b_{121} & b_{212} & b_{222} 
   \end{pmatrix}                                                                                                     .
\end{equation}
If we also consider the first and second order signatures, then we obtain the variety $\mathcal{B}_{2,\leq 3}$ sitting in $\PP^{14}$ but with ambient space $\PP^{11}$.
The linear relation $b_{12}=b_{21}$ gets added to the two above. This resulting variety is $5$-dimensional of degree $16$.
Its  ideal $B_{2,\leq 3}$ is  generated by $8$ quadrics and $19$ cubics. The five extra quadrics,
beyond those in (\ref{eq:B23}), are
\begin{equation}                                                                                                     
 \begin{matrix}                                                                                                      
          b_1b_{212} - b_2b_{121}, \quad b_1b_{222}-b_2(2b_{122}-b_{212}), \quad  b_1(2b_{112}-b_{121})-b_2b_{111},\\ b_1b_{22}-b_2b_{12}+2b(b_{212}- b_{122}), \quad    
       b_1b_{12}-b_2b_{11}+2b ( b_{112} - b_{121}).                         
\end{matrix}                                                                                                         
\end{equation}
Here $b$ is the homogenizing variable from the constant zero order signature. 
\hfill $\diamondsuit$
\end{example}

\begin{table}[h]
\begin{center} \begin{tabular}{ | l | l | l | l | l | l | l | l | p{1.5cm} |} \hline  $d$ & $k$ & $a$ &  $\dim$ & $\deg$ & gens \\ 
\hline $d$ &$  2$ & $\binom{d+1}{2}-1$ &$\binom{d+1}{2}-1$  & 1 & 0\\
\hline $d$ &  $\leq 2$ & $\binom{d+2}{2}-1$ &$\binom{d+2}{2}-1$  & 1 &0  \\
\hline 2 & 3 & 5 & 3 & 3 & 3\\
\hline 2 & $\leq 3$ & 11 & 5 & 16 &  8, 19\\
\hline 3 & 3 & 16 & 7 & 21 &45\\
\hline 3 & $\leq 3$ & 27  & 9 & 130 & 77, 175\\
\hline 2 & 4 & 9 & 4 & 12 & 10 \\
\hline 2 & $\leq 4$ & 22  & 5 & 102 &  63, 40\\
\hline 
 \end{tabular} 
\vspace{-0.11in}
\end{center}
  \caption{\label{tab:Bdk}   Invariants of the ideals $B_{d,k}$ and  $B_{d,\leq n}$, that define the varieties $\mathcal{B}_{d,k}$ and $\mathcal{B}_{d,\leq n}$} \medskip
  \end{table}

Table \ref{tab:Bdk} becomes even more interesting once we compare with the Gaussian moment varieties 
in \cite{AFS, ARS}.  We denote these now by $ \mathcal{N}_{d,\leq n}$. They are subvarieties of $\PP^{\binom{d+n}{d}-1}$ consisting of the vectors of all moments of order $\leq n$ of a Gaussian distribution on $\RR^d$. These moments are polynomial expressions parametrized by the entries of the Gaussian's mean $\mu \in \RR^d$ and covariance matrix $\Sigma$. Propositions 5, 7 and 9 in \cite{AFS} 
 describe the dimension, degree and minimal generators for  $\mathcal{N}_{2,\leq 3}$,  $\mathcal{N}_{2,\leq 4}$ and  $\mathcal{N}_{3,\leq 3}$ respectively. The first two items match the ones obtained for 
 $\mathcal{B}_{d,\leq n}$ with the same values of $d,n$.
 This highlights the similarity between expected signatures of Brownian motion and moments of Gaussians.

While the degrees of the generators of $\mathcal{N}_{d,\leq n}$ increase with $d,n$
 (e.g.~there are octic generators in $\mathcal{N}_{3,\leq 3}$), it appears that $\mathcal{B}_{d,\leq n}$ 
 only has generators up to degree~$3$. We record the following basic
 fact concerning the map between the two varieties given in~Proposition~\ref{prop:moments}.

\begin{proposition}
The linear projection from the expected signature variety $\mathcal{B}_{d,\leq n}$ 
onto the Gaussian moment variety $\mathcal{N}_{d,\leq n}$ is birational.
Both varieties have dimension $\binom{d+1}{2} + d-1$.
\end{proposition}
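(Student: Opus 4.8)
The plan is to realize $\mathcal{B}_{d,\leq n}$ and $\mathcal{N}_{d,\leq n}$ as birational images of one and the same parameter space, and to recognize the linear projection between them as the map comparing these two parametrizations. Write $\beta\colon (\mu,\Sigma)\mapsto \exp(\mu+\tfrac{1}{2}\Sigma)$ for the parametrization of $\mathcal{B}_{d,\leq n}$, with $\mu$ ranging over $\CC^d$ and $\Sigma$ over symmetric $d\times d$ matrices, and $\nu\colon (\mu,\Sigma)\mapsto (m_u)_{|u|\leq n}$ for the Gaussian moment parametrization of $\mathcal{N}_{d,\leq n}$. Proposition~\ref{prop:moments} says precisely that the symmetrization map $\pi$, which carries each coordinate $\sigma_I$ to the commutative monomial it symmetrizes to and is therefore \emph{linear} on the two ambient spaces, intertwines them: $\pi\circ\beta=\nu$, so $\pi$ maps $\mathcal{B}_{d,\leq n}$ onto $\mathcal{N}_{d,\leq n}$. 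Because the constant coordinate $\sigma_\emptyset$ is carried to the constant moment $m_0\equiv 1$, the image of $\beta$ lies in the affine chart on which $\pi$ is defined, so $\pi$ restricts to an honest rational map on $\mathcal{B}_{d,\leq n}$.

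First I would check that, for $n\geq 2$, the maps $\beta$ and $\nu$ are generically one-to-one --- in fact injective on the entire parameter space. From $\exp(\mu+\tfrac{1}{2}\Sigma)$ the order-one component equals $\mu$ and the order-two component equals $\tfrac{1}{2}(\Sigma+\mu\otimes\mu)$, and these together recover $(\mu,\Sigma)$; in exactly the same way $\mu_i=m_{e_i}$ and $\Sigma_{ij}=m_{e_i+e_j}-\mu_i\mu_j$ recover the parameters from the Gaussian moments of orders $1$ and $2$. Hence $\beta$ and $\nu$ are birational parametrizations of $\mathcal{B}_{d,\leq n}$ and $\mathcal{N}_{d,\leq n}$, so both varieties have dimension equal to the number of free entries of $(\mu,\Sigma)$, namely $\binom{d+1}{2}+d$; this matches the dimensions of $\mathcal{N}_{2,\leq 3}$, $\mathcal{N}_{2,\leq 4}$ and $\mathcal{N}_{3,\leq 3}$ recorded in \cite[Propositions~5,~7 and~9]{AFS}. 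Finally, restricting to a dense open subset $U$ of the parameter space on which $\beta$ and $\nu$ are isomorphisms onto dense open subsets of $\mathcal{B}_{d,\leq n}$ and $\mathcal{N}_{d,\leq n}$, the identity $\pi\circ\beta=\nu$ forces $\pi$ to agree with $\nu\circ(\beta|_U)^{-1}$ on $\beta(U)$; since that composite is birational, so is the restriction of $\pi$ to $\mathcal{B}_{d,\leq n}$.

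I do not expect a genuine obstacle here. The one substantive ingredient is Proposition~\ref{prop:moments} --- already established, and precisely what makes $\pi$ linear --- whereas the recovery of a Gaussian from its first two moments is elementary. The point that needs care is the projective bookkeeping: one must keep the constant term $\sigma_\emptyset\equiv 1$ (respectively $m_0\equiv 1$) as the homogenizing coordinate on both sides, since this is what pins $\beta$ and $\nu$ to a fixed affine chart and removes the root-of-unity scaling ambiguity that occurs for the single-level varieties $\mathcal{B}_{d,k}$. If one prefers to avoid inverting $\beta$ explicitly, the conclusion follows equally from a degree count: the degree of $\pi|_{\mathcal{B}_{d,\leq n}}$ equals $\deg(\nu)/\deg(\beta)=1$.
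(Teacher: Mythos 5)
Your argument is essentially the paper's: both use that the Gaussian moment parametrization $\nu$ is birational (the mean and covariance are rationally recoverable from low-order moments), together with the explicit formula $\exp(\mu+\tfrac12\Sigma)$, to produce a rational inverse of the linear projection; the paper states the first fact as ``rational identifiability'' and the second as ``the preimage equals (\ref{eq:stratono}),'' whereas you unpack both by reading $\mu$ and $\Sigma$ off the level-one and level-two components.

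One small discrepancy to reconcile: you conclude $\dim = \binom{d+1}{2}+d$, whereas the proposition states $\binom{d+1}{2}+d-1$. Your count agrees with Table~\ref{tab:Bdk} (e.g.\ $\dim\mathcal{B}_{2,\leq 3}=5=\binom{3}{2}+2$), and the ``$-1$'' in the statement appears to be carried over from the single-level variety $\mathcal{B}_{d,k}$, whose weighted-projective parameter space really does drop a dimension; for $\mathcal{B}_{d,\leq n}$ the constant coordinate $\sigma_\emptyset$ acts as the homogenizing variable, as you correctly observe, so no dimension is lost.
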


\begin{proof} The Gaussian moment variety is rationally identifiable. We can
recover $\mu$ and $\Sigma$ rationally from a general point in $\mathcal{N}_{d,\leq n}$.
The preimage of that point in $\mathcal{B}_{d,\leq n}$ equals (\ref{eq:stratono}).
\end{proof}

We believe that the degree is also preserved. 
The next conjecture implies this.

\begin{conjecture} \label{conj76}
The birational map from $\,\mathcal{B}_{d,\leq n}\,$  to $\,\mathcal{N}_{d,\leq n}$ 
is defined everywhere on $\mathcal{B}_{d,\leq n}$. Equivalently,
in the language of projective geometry, this birational map has no base points.
\end{conjecture}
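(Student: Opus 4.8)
By the discussion preceding Proposition~\ref{prop:moments}, the birational map $\beta$ is the restriction to $\mathcal{B}_{d,\leq n}$ of the linear projection $\pi$ of $\PP(T^n(\CC^d))$ determined by the symmetrization map $\mathrm{sym}\colon T^n(\CC^d)\to\bigoplus_{k=0}^{n}\mathrm{Sym}^{k}(\CC^d)$, whose coordinates are the shuffle linear forms $\sigma_{w(u)}$, $|u|\le n$. Hence $\beta$ has no base points on $\mathcal{B}_{d,\leq n}$ if and only if $\mathcal{B}_{d,\leq n}$ is disjoint from the centre $\Lambda=\PP(\ker\mathrm{sym})$. Every point of the form $[\exp(\mu+\tfrac12\Sigma)]$ has constant coordinate $\sigma_{\emptyset}=1\neq0$, so lies outside $\Lambda$; the entire difficulty is thus concentrated in the boundary points of this parametrization.

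\emph{Reduction to a single degree.} I would first reduce the statement to: for every $k\le n$, the affine cone $\widehat{\mathcal{B}_{d,k}}$ meets the linear space $\ker\bigl(\mathrm{sym}_k\colon (\CC^d)^{\otimes k}\to\mathrm{Sym}^k(\CC^d)\bigr)$ only in the origin. Indeed, a point $p\in\mathcal{B}_{d,\leq n}\cap\Lambda$ arises as $p=\lim_{t\to0}t^{-\ell}\exp(v(t))$ for a Puiseux arc $v(t)=\mu(t)+\tfrac12\Sigma(t)$; its graded pieces satisfy $p^{(k)}=\lim_{t\to0}t^{-\ell}\,\exp(v(t))^{(k)}\in\widehat{\mathcal{B}_{d,k}}$ because $\widehat{\mathcal{B}_{d,k}}$ is a closed cone, while $\mathrm{sym}$ being graded forces $\mathrm{sym}_k(p^{(k)})=0$. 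If each such intersection were trivial we would get $p=0$, a contradiction.

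\emph{The parametrized points in a fixed degree.} For a point $\tau=\exp(\mu+\tfrac12\Sigma)^{(k)}$ one has the explicit ``sorted'' formula (the analogue for $\mu+\tfrac12\Sigma$ of Chen's formula, cf.\ Example~\ref{ex:XQ}), and its image under $\mathrm{sym}_k$ is the degree-$k$ form
\[
 P_k(\xi;\mu,\Sigma)\ =\ \sum_{a+2b=k}\frac{\langle\mu,\xi\rangle^{a}\,(\xi^{T}\Sigma\xi)^{b}}{a!\,b!\,2^{b}}.
\]
Suppose $\mathrm{sym}_k(\tau)=0$, i.e.\ $P_k\equiv 0$. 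Writing $c_k(z)=\sum_{a+2b=k}z^{b}/(a!b!2^{b})$, so that $\sum_{k\ge0}c_k(z)w^{k}=\exp\bigl(w+\tfrac12 z w^{2}\bigr)$, and dividing $P_k$ by $\langle\mu,\xi\rangle^{k}$ in the field of rational functions in $\xi$, one obtains $c_k\bigl((\xi^{T}\Sigma\xi)/\langle\mu,\xi\rangle^{2}\bigr)=0$; since $c_k$ has only finitely many roots and all of them are nonzero constants, this forces $\mu=0$ with $k$ odd, or $\Sigma=\lambda\mu\mu^{T}$ with $c_k(\lambda)=0$. In the first case $\exp(0+\tfrac12\Sigma)^{(k)}=0$ because $k$ is odd; in the second, $\exp(\mu+\tfrac12\lambda\mu\mu^{T})^{(k)}=c_k(\lambda)\,\mu^{\otimes k}=0$. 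Either way $\tau=0$, so $\mathrm{sym}_k$ is injective on the parametrized locus.

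\emph{The boundary points, and the main obstacle.} It remains to control the limit tensors $\tau_0=\lim_{t\to0}t^{-e}\exp(v(t))^{(k)}$ along arcs whose leading direction $(\mu_0,\Sigma_0)$ lies in the base locus of the parametrization, namely $\{\mu=0\}$ (for $k$ odd) together with the ``Hermite rank-one locus'' $\{\Sigma=\lambda\mu\mu^{T}:c_k(\lambda)=0\}$. For arcs into $\{\mu=0\}$ the leading tensor is a symmetric product of $\Sigma_0$ with at most one extra vector, hence visibly outside $\ker\mathrm{sym}_k$. For an arc $\bigl(\mu_0+t\mu_1+\cdots,\ \lambda\mu_0\mu_0^{T}+t\Sigma_1+\cdots\bigr)$ the leading tensor is a derivative of the parametrization, and, using $\partial_{\mu}P_k[\mu_1]=\langle\mu_1,\xi\rangle P_{k-1}$ and $\partial_{\Sigma}P_k[\Sigma_1]=\tfrac12(\xi^{T}\Sigma_1\xi)P_{k-2}$, one finds $\mathrm{sym}_k(\tau_0)=\langle\mu_0,\xi\rangle^{k-2}\bigl(c_{k-1}(\lambda)\langle\mu_0,\xi\rangle\langle\mu_1,\xi\rangle+\tfrac12 c_{k-2}(\lambda)\xi^{T}\Sigma_1\xi\bigr)$. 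The three-term recursion $(m{+}1)c_{m+1}=c_m+z c_{m-1}$, together with $c_0\equiv1$, shows that a root of $c_k$ is never a root of $c_{k-1}$; so for most first-order data the right-hand side is nonzero and $\tau_0\notin\ker\mathrm{sym}_k$. The obstacle is the residual family of arcs that are tangent to the base locus to first and higher order: there one must pass to the higher Puiseux terms and show the surviving leading tensor again avoids $\ker\mathrm{sym}_k$. I expect this to go through by induction on $k$, the inductive hypothesis feeding in through the identities $\partial P_k\sim P_{k-1},P_{k-2}$ and the single-degree statement of the previous step; equivalently, one must resolve the indeterminacy of the weighted-homogenized parametrization $\PP\bigl(1^{d},2^{\binom{d+1}{2}},1\bigr)\dashrightarrow\PP(T^n(\CC^d))$ along its base scheme and check that the exceptional locus maps into the complement of $\Lambda$. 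Making this uniform in $d$ and $k$ is the technical heart of the argument and the step I expect to be hardest; an alternative would be to transport a known resolution of the Gaussian moment varieties $\mathcal{N}_{d,\leq n}$ from \cite{AFS, ARS} back along $\beta$.
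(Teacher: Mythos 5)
The paper records this statement as an open conjecture (Conjecture~\ref{conj76}); there is no proof in the paper to compare your attempt against, so what follows evaluates your strategy on its own terms. Your framing is correct: the map to $\mathcal{N}_{d,\le n}$ is the restriction of the linear projection whose center is $\Lambda=\PP(\ker\mathrm{sym})$, so absence of base points on $\mathcal{B}_{d,\le n}$ is exactly the disjointness $\mathcal{B}_{d,\le n}\cap\Lambda=\emptyset$. The reduction to the single-degree claim $\widehat{\mathcal{B}}_{d,k}\cap\ker\mathrm{sym}_k=\{0\}$ is valid: $\mathrm{sym}$ is graded and each $\widehat{\mathcal{B}}_{d,k}$ is a closed cone, so the graded pieces $p^{(k)}$ of any $p\in\widehat{\mathcal{B}}_{d,\le n}\cap\ker\mathrm{sym}$ land in $\widehat{\mathcal{B}}_{d,k}\cap\ker\mathrm{sym}_k$, and if these are trivial then $p=0$. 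Your identification of the symmetrization as $P_k=\langle\mu,\xi\rangle^{k}c_k\bigl((\xi^T\Sigma\xi)/\langle\mu,\xi\rangle^2\bigr)$, the description of its zero set via $\{\mu=0\}$ and the Hermite rank-one locus $\{\Sigma=\lambda\mu\mu^{T}: c_k(\lambda)=0\}$, and the consecutive-root separation of the $c_k$ from the three-term recursion $(m{+}1)c_{m+1}=c_m+zc_{m-1}$ are all correct. They do prove the desired disjointness on the open parametrized locus of $\widehat{\mathcal{B}}_{d,k}$.

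The genuine gap, which you flag yourself, is the decisive remaining step: showing that no \emph{boundary} point of $\widehat{\mathcal{B}}_{d,k}$ lies in $\ker\mathrm{sym}_k$. For Puiseux arcs tangent to the Hermite locus to first or higher order, the surviving leading tensor $\tau_e$ is a higher derivative of the parametrization, not a point of the form $\exp(v)^{(k)}$, so the implication ``$\mathrm{sym}_k(\tau_e)=0\Rightarrow\tau_e=0$'' cannot be read off the way it was for parametrized points. Your use of $\partial_\mu P_k=\langle\mu_1,\xi\rangle P_{k-1}$ and $\partial_\Sigma P_k=\tfrac12(\xi^{T}\Sigma_1\xi)P_{k-2}$ together with the root separation handles the first-order tangent arcs for generic perturbation data, but the residual family where the first-order symmetrization cancels is exactly where the induction on Puiseux order must be carried through, and this is not done. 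Two further cautions: first, the single-degree statement is strictly stronger than the conjecture, since the conjecture only constrains graded tuples $(p^{(0)},\dots,p^{(n)})$ coming from a \emph{common} arc rather than each graded piece independently, so your route could fail even if the conjecture is true; second, one needs $\widehat{\mathcal{B}}_{d,k}$ to be irreducible for the Puiseux-arc reduction, which holds here since the map is a weighted-homogeneous polynomial parametrization. As it stands, the proposal is a plausible strategy with a correctly isolated core difficulty, not a proof.
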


\begin{remark}[Brownian motion in magnetic field and renormalization] \label{rmk:mag}  \rm
We presented a fairly pedestrian derivation of the expected signature formula (\ref{eq:stratono}) for Brownian motion $X_t = \mu t + \sqrt{\Sigma} B_t $. 
A more sophisticated construction starts with the Brownian rough path
$$  \mathbb{X}_t \,\,=\,\, \bigl(\,1, \,X_t, \int_0^t X \otimes \circ \, {\rm d}X \bigr) , $$  
defined by Stratonovich stochastic integration. In a second step, 
one uses Lyons' extension theorem to get all signature tensors
associated to $\mathbb{X}$. The third and last step is to take (component-wise) the expected value to arrive at
 (\ref{eq:stratono}).
 Let $Q$ be a skew-symmetric $d \times d$ matrix and consider the ``perturbed'' 
 (translated) Brownian rough path 
 $T_Q \mathbb{X} := \mathbb{\tilde X}$ given by 
$$
      \mathbb{\tilde X}_t \,\, :=\,\, \bigl(\,1, \,X_t, \int_0^t X \otimes \circ \, {\rm d}X + tQ  \,\bigr) \ .
$$

This construction is far from ad hoc: a systematic study of such perturbations  is closely related to Hairer's theory of renormalized SPDEs \cite{H14}. Brownian rough paths of this form have arisen  in a number of concrete situations with non-reversible noise, including the prominent example of Brownian motion in a magnetic field (cf.~\cite[Ch.3]{FH}) and 
subsequent works on homogenization (by Melbourne and coworkers). On the level of expected signatures, 
 \begin{equation}
\label{eq:stratonoRough2}
 \mathbb{E}(\sigma(\mathbb{\tilde X}) )\,\, = \,\, {\rm exp}\bigl(\, \mu + Q +  \frac{1}{2} \Sigma \,\bigr) 
  \end{equation}
follows from the L\'evy-Kintchine formula in \cite{FS}, by viewing $\mathbb{\tilde X}$ as L\'evy rough paths with L\'evy triplet $(\mu+Q, \Sigma, 0)$.
In absence of a diffusive component, i.e. when $\Sigma = 0$, we are precisely in the ($m=2$) rough Veronese setting of Section \ref{sec:rough}, cf. Example \ref{ex:XQ}.

The formula (\ref{eq:stratonoRough2}) generalizes 
(\ref{eq:stratono}). It replaces
the symmetric matrix $\Sigma$ with $\tilde \Sigma = \Sigma+2Q$, 
where $Q$ is skew-symmetric.
The degree $k$ part in the tensor polynomial ${\rm exp}\bigl(\mu + \frac{1}{2} \tilde \Sigma
\bigr)$ is denoted $\mathbb{E}(\sigma^{(k)}(\tilde X))$ and is called
the {\em expected magnetic Brownian signature tensor}
of step $k$. Each entry $\sigma_{i_1 i_2 \ldots i_k}$
of this tensor is a homogeneous polynomial of degree $k$
in the $d+d^2$ parameters. The $d$ parameters from $\mu$ have degree $1$, and the
$d^2$ parameters, from $\Sigma$ and $Q$, have degree $2$.
One can derive explicit formulas for $\mathbb{E}(\sigma^{(k)}(\mathbb{ X}))$ in terms of $\mu$ and $\Sigma$ and $Q$.

Finally, we define the {\em expected magnetic Brownian signature variety} to be the
subvariety of $\PP^{d^k-1}$ that is parametrized by the
tensors $\sigma^{(k)}(X)$. This projective variety 
is denoted $\mathcal{B}^{\rm{mag}}_{d,k}$.
Similarly, we can also define a projective variety
$\mathcal{B}^{\rm{mag}}_{d,\leq n}$ that lives in the projective space
associated with $T^n(\RR^d)$. These varieties, whose detailed study is deferred to future work, are ``stochastic'' generalizations of the rough Veronese, in the same way that Brownian and Gaussian varieties (as studied in \cite{AFS, ARS}) are natural generalizations of the classical Veronese.
\end{remark}

\subsection{Mixtures and Secant Varieties}

The signature varieties of Brownian motion 
live above the moment varieties of Gaussians.
There is a natural map from the latter onto the former,
induced by tensor symmetrization  $\PP^{d^k -1} \dashrightarrow \PP^{\binom{d+k-1}{k}}$.
We can pass to secant varieties on either side of this linear projection.

Secant varieties are the algebraic representation of the
statistical notion of {\em mixtures} of probability distributions.
The moment varieties for mixtures of Gaussians were
studied in detail in \cite{ARS}. Mixtures of Brownian motion lift these varieties
to the tensor space $\PP^{d^k-1}$. In other words, the varieties that follow are 
non-abelian extensions of those studied in \cite{ARS}.

A mixture of $r$ Gaussians is parametrized by $r$ pairs
$(\mu _1,\Sigma _1), \ldots, (\mu_r,\Sigma_r)$ as above, along with
 non-negative mixture probabilities $\alpha_1,\ldots,\alpha_r$ 
satisfying $\sum_{j=1}^r \alpha_j = 1$. Let $X^{(j)}$ denote 
Brownian motion with drift and covariance given by the
$j$th pair $(\mu_j, \Sigma_j)$. We can then consider the
mixed process $\bar{X}$ that is defined by selecting
Brownian motion $X^{(j)}$ with probability $\alpha_j$.
As an immediate consequence from (\ref{eq:stratono}), the mixed expected signature is given~by
\begin{equation*}
\mathbb{E} \left( \sigma (\bar{X}) \right) \,\,=\,\,\sum_{j=1}^r \alpha _{j} \cdot 
\exp \left( \mu ^{\left( j\right) }+\frac{1}{2}\Sigma ^{\left( j\right) }\right).
\end{equation*}
By focusing on the terms of degree $k$, or on all terms of degree $\leq n$, we obtain
the signature varieties for Brownian mixtures, denoted
$\mathcal{B}_{d,k,r}$ and $\mathcal{B}_{d,\leq n,r}$. These
live in projective spaces, and they map onto the moment varieties 
of Gaussian mixtures studied in \cite{ARS}. 

As with our other signature varieties, we are interested in their equations, as well as in 
formulas for recovering parameters, 
i.e.~identifiability as defined in the last section. We boldly conjecture that, unlike their commutative shadows in \cite{ARS}, the expected signature varieties $\mathcal{B}_{d,\leq n, r}$ are always algebraically identifiable. 
This was verified computationally in the first case where a secant of a Gaussian moment variety fails to be identifiable: $d=3, n = 3, r=2$.
On the other hand, while algebraic identifiability works for generic parameters $(\mu _1,\Sigma _1), \ldots, (\mu_r,\Sigma_r)$,
it may fail over specific submodels. We now illlustrate this.

\begin{example}[$d=r=2$] \rm
Consider mixtures of two Brownian paths in $\RR^2$, each with identity covariance: 
$\, \Sigma_1 = \Sigma_2 = I$.
This model has five parameters, namely the mixture parameter $\alpha \in \left( 0,1\right) $ and the entries of the two drift vectors: $\mu_1 = (a_1,a_2)$, $\mu_2 = (b_1,b_2)$.
The mixed expected signature of this model is the bivariate tensor series
\begin{equation*}
\mathbb{E}\bigl( \sigma(\bar{X}) \bigr) \quad = \quad
\alpha \cdot \exp \left( \mu_1 +\frac{1}{2}I\right) \,\,+\,\,\left( 1-\alpha \right)\cdot \exp \left( \mu_2+\frac{1}{2}I\right) .
\end{equation*}
The first and second signature tensors have five distinct entries
$\sigma_1,\sigma_2,\sigma_{11}, \sigma_{12} = \sigma_{21}$ and
$\sigma_{22}$. We write their expressions in terms of
the five model parameters in {\tt Macaulay2} format:
\begin{verbatim}
R=QQ[a1,a2,b1,b2,p,s11,s12,s22,s1,s2,MonomialOrder=>Lex];
I = ideal(  p*a1 + (1-p)*b1 - s1, p*a2 + (1-p)*b2 - s2,
           (p/2)*(a1^2 +1) + ((1-p)/2)*(b1^2 +1) - s11,
           (p/2)*(a1*a2+0) + ((1-p)/2)*(b1*b2+0) - s12,
           (p/2)*(a2^2 +1) + ((1-p)/2)*(b2^2+1) - s22);
toString gens gb I           
\end{verbatim}
The command {\tt gens gb I}
computes a Gr\"obner basis. Prepending the command {\tt toString}
displays that Gr\"obner basis. This output consists of $14$ polynomials, and it
reveals that
the model is \textit{not} identifiable. The associated variety is precisely the Veronese surface in $5$-space, 
after the linear change of coordinates coming from fixing identity covariances.
 The mixture model of two unit-covariance Brownian paths in $\mathbb{R}^2$ is cut out 
 by the $3 \times 3$  determinant
$$ \left( \begin{matrix}
1 & s_1 & s_2 \\
s_1 & 2s_{11} - 1 & 2s_{12} \\
s_2 & 2s_{12} & 2s_{22} -1 \\
\end{matrix} \right) . $$
This determinant is the first relation in $s_{ij}$ seen
 in the Gr\"obner basis {\tt gb I}.
\hfill $\diamondsuit $
\end{example}

The last example generalizes to mixtures with common covariance matrix $\Sigma$.

\begin{theorem}
\label{thm:identhom}  Consider mixtures of Brownian motion in $\RR^d$ with $r>1$ components
having the same covariance $\Sigma$. Fix $n$ such that $\,\binom{d+n}{n} \geq (d+1)r$. The
$r$ mean vectors are algebraically identifiable from the expected signature tensors up
to order $n$,  except in the following cases:
$$  \begin{matrix} n=2 \quad \, {\rm or} \quad\,
n=3, \, d=4, \, r=7 \,\\ {\rm or} \quad\,
n=4, \, d=2, \, r=5 \,\quad {\rm or} \quad\,
n=4, \, d=4, \, r=14. \end{matrix} $$
The same statement holds true for Brownian rough paths in 
the presence of a magnetic field, as in  Remark \ref{rmk:mag},
given by a fixed skew-symmetric matrix $Q$, and $\Sigma$ replaced by $\tilde\Sigma = \Sigma + 2Q$. 
\end{theorem}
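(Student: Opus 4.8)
The plan is to peel off the fixed degree-two data, reduce, uniformly for the magnetic and the non-magnetic versions, to the dimension of a single secant variety of a Veronese variety, and then read off the exceptions from the Alexander--Hirschowitz classification.

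\textbf{Step 1 (killing the fixed degree-two part).} Let $\pi\colon T^n(\RR^d)\to\mathrm{Sym}^{\le n}(\RR^d)$ be the canonical surjection onto the truncated symmetric algebra; it is a homomorphism of algebras. Since $\Sigma$ is symmetric while $Q$ is skew-symmetric, $\pi\bigl(\mu+\tfrac12\tilde\Sigma\bigr)=\pi\bigl(\mu+\tfrac12\Sigma\bigr)=\mu+\tfrac12\Sigma$, so $\pi$ carries the expected magnetic signature $\sum_{j=1}^r\alpha_j\exp\bigl(\mu_j+\tfrac12\tilde\Sigma\bigr)$ to $\sum_{j=1}^r\alpha_j\exp\bigl(\mu_j+\tfrac12\Sigma\bigr)$, the truncated moment generating function of the corresponding Gaussian mixture; this is Proposition~\ref{prop:moments}, read off componentwise. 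Inside the commutative ring $\mathrm{Sym}^{\le n}(\RR^d)$ we may split $\exp\bigl(\mu+\tfrac12\Sigma\bigr)=\exp\bigl(\tfrac12\Sigma\bigr)\cdot\exp(\mu)$, and multiplication by $\exp\bigl(\tfrac12\Sigma\bigr)$ is an invertible linear operator $L$, because $\exp\bigl(\tfrac12\Sigma\bigr)$ has constant term $1$ and is therefore a unit of the truncated symmetric algebra. Finally $\mu\mapsto\exp(\mu)=\sum_{k=0}^n\tfrac1{k!}\mu^{k}$ is, up to an invertible diagonal rescaling of coordinates, the standard affine chart of the degree-$n$ Veronese variety $\nu_n(\PP^{d})$. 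Composing with these fixed invertible linear maps turns the parametrization by the mixture data $(\alpha_j,\mu_j)_{j=1}^r$ into the classical over-parametrization $(\alpha_j,\mu_j)\mapsto\sum_{j=1}^r\alpha_j\,\nu_n(\mu_j)$ of the affine cone over the $r$-th secant variety of $\nu_n(\PP^{d})$.

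\textbf{Step 2 (identifiability $\Leftrightarrow$ non-defectivity).} This over-parametrization has $r(d+1)$ parameters and, by hypothesis, $r(d+1)\le\binom{d+n}{n}=\dim\mathrm{Sym}^{\le n}(\RR^d)$. Its image has the expected dimension $r(d+1)-1$ exactly when the $r$-th secant variety of $\nu_n(\PP^{d})$ is non-defective, and in that case the map is generically finite. Because the linear maps of Step~1 can only enlarge fibres, the mean vectors are then recovered, up to finite ambiguity, from a general expected (magnetic) Brownian signature tensor of order $\le n$; this is the asserted algebraic identifiability. Conversely, in each exceptional case the defect is realized by moving the decomposition points, so the $\mu_j$ sweep out a positive-dimensional family with the same truncated expected signature, and identifiability fails.

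\textbf{Step 3 (the exception list).} It remains to intersect the Alexander--Hirschowitz list of defective secants of $\nu_n(\PP^{d})$ with the constraint $r(d+1)\le\binom{d+n}{n}$. That list consists of $n=2$ with $2\le r\le d$, together with the sporadic triples $(n,d,r)\in\{(3,4,7),\,(4,2,5),\,(4,3,9),\,(4,4,14)\}$. When $n=2$ the constraint reads $r(d+1)\le\binom{d+2}{2}$, hence $r\le\tfrac{d+2}{2}\le d$ for $d\ge2$, so the entire range $n=2$ is excepted. Of the sporadic triples, $(3,4,7)$, $(4,2,5)$ and $(4,4,14)$ satisfy the constraint with equality and are genuine exceptions, while $(4,3,9)$ violates it, since $4\cdot 9=36>35=\binom{7}{4}$, and is therefore ruled out by the hypothesis already. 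This reproduces precisely the list in the statement. The main obstacle is Step~1: one has to check carefully that $\pi$ is multiplicative and annihilates exactly the skew part, and that multiplication by $\exp\bigl(\tfrac12\Sigma\bigr)$ is genuinely invertible after truncation, so that the reduction to a secant of a Veronese variety really is an isomorphism of parameter problems; after that, the appeal to \cite{ARS} and to Alexander--Hirschowitz is routine.
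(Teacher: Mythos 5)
Your approach differs from the paper's. The paper analyzes the structure of the expansion of $\exp(\mu+\tfrac12\tilde\Sigma)$ directly: it observes that each coordinate $\sigma_{i_1\cdots i_k}$, viewed as a polynomial in $\mu$ for fixed $\tilde\Sigma$, equals a constant multiple of $\mu_{i_1}\cdots\mu_{i_k}$ plus a linear combination of strictly lower-degree $\mu$-monomials, and deduces from this unipotent upper-triangular shape that the map $\mu\mapsto\exp(\mu+\tfrac12\tilde\Sigma)$ is an invertible linear change of coordinates composed with the Veronese map. Hence the secant of $\mathcal{B}^{\tilde\Sigma}_{d,\le n}$ is linearly isomorphic to the secant of the Veronese, and the two identifiability problems are literally the same. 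You instead project via $\pi$ onto the truncated symmetric algebra and reduce to the Gaussian moments.

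The forward direction of your reduction is fine: $\pi$ and the subsequent rescalings are fixed linear maps, so the fibers of the signature parametrization sit inside the fibers of its composition with $\pi$, and non-defectivity of the Veronese secant gives algebraic identifiability of the expected signature. But the converse is where a gap opens. The map $\pi$ is a projection, not an isomorphism; your phrase ``composing with these fixed invertible linear maps'' glosses over this. Defectivity of the commutative shadow does not automatically propagate back to non-identifiability of the non-commutative signature parametrization---indeed, immediately before this theorem the paper conjectures exactly the opposite phenomenon when the covariances $\Sigma_j$ are allowed to vary: that the signature varieties $\mathcal{B}_{d,\le n,r}$ are always identifiable even though their moment shadows are not. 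So your sentence ``Conversely, in each exceptional case the defect is realized by moving the decomposition points, so the $\mu_j$ sweep out a positive-dimensional family with the same truncated expected signature'' asserts precisely what must be proved and is not a consequence of the projection argument. The gap can be closed by one extra observation: for fixed $\tilde\Sigma$, every coordinate $\sigma_I$ of the truncated exponential is a polynomial of degree at most $n$ in $\mu$, hence the linear span $W$ of $\mathcal{B}^{\tilde\Sigma}_{d,\le n}$ in $T^n(\RR^d)$ has $\dim W \le \binom{d+n}{n}$; since $\pi$ maps $W$ onto the full non-degenerate Veronese span, $\pi|_W$ is a linear isomorphism, and the reduction really is an isomorphism of parameter problems. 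Without this (or the paper's upper-triangularity argument, which proves the same thing), the exception list is not justified.
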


\begin{proof}
This result is a slight generalization of \cite[Theorem 6.2.3]{Athesis},
where the analog for homoscedastic Gaussian mixtures appears.
The proof relies on the same fact: the subvariety $\mathcal{B}^{\tilde \Sigma}_{d,\leq n}$ of $\mathcal{B}_{d,\leq n}$ in question
is isomorphic to the Veronese variety after a linear change of coordinates. Hence, the $r$th secant variety $\mathcal{B}^{\tilde \Sigma}_{d,\leq n, r}$ that corresponds to the mixture of processes is isomorphic to the $r$th secant of the Veronese variety.
Indeed, consider the expansion of 
\begin{equation*}
\exp  \left( \mu+\frac{1}{2} \tilde{\Sigma} \right),
\end{equation*}
where $\tilde \Sigma$ is a fixed matrix. While we cannot distribute the exponential due to the tensor noncommutativity, for each particular expected signature $\sigma_{i_1\ldots i_k}$, we can factor
out the entries  $s_{ij}$ of
the matrix $\tilde \Sigma$ that appear in each summand.
This implies that $\sigma_{i_1\ldots i_k}$ equals $\mu_{i_1}\ldots \mu_{i_k}$ plus a linear combination of such $\mu$-monomials of lower order with products of $s_{ij}$ as coefficients.

It remains to see when the secant variety fails to have the expected dimension. The celebrated \textit{Alexander-Hirschowitz Theorem} \cite{AH} from algebraic geometry provides a classification of all the cases where 
secant varieties of a Veronese variety have that special property.
  The listed values are the exceptions that are relevant for the above parameter conditions.
\end{proof}

\bigskip \bigskip

\noindent
{\bf Acknowledgments.} We thank   Joscha Diehl, Francesco Galuppi
 and Anna Seigal  for valuable comments on drafts of this paper.
   Fernando De Ter\'an Vergara helped us with some references.
Carlos Am\'endola was supported by the Einstein Foundation Berlin and the Deutsche Forschungsgemeinschaft (DFG) in the context of the Emmy Noether junior research group KR 4512/1-1. Peter Friz was partially supported by the European Research Council through Consolidator Grant 683164 and DFG research unit FOR2402. 
Bernd Sturmfels was supported by the  US National Science Foundation (DMS-1419018).

\bigskip

\end{document}